\documentclass[twoside, 10pt]{article}
\usepackage{amssymb, amsmath, mathrsfs, amsthm}
\usepackage{graphicx}
\usepackage{color}
\usepackage[ps2pdf=true]{hyperref}
\usepackage[top=2cm, bottom=2cm, left=2cm, right=2cm]{geometry}
\usepackage{float, caption, subcaption}

\newcommand{\bd}{\begin{displaymath}}
\newcommand{\ed}{\end{displaymath}}
\newcommand{\bi}{\begin{itemize}}
\newcommand{\ei}{\end{itemize}}
\newcommand{\be}{\begin{enumerate}}
\newcommand{\ee}{\end{enumerate}}
\newcommand{\beqs}{\begin{eqnarray*}}
\newcommand{\eeqs}{\end{eqnarray*}}

\definecolor{DarkGreen}{rgb}{0.2, 0.6, 0.3}

\newcommand{\kr}{\epsilon}
\renewcommand{\r}{r}
\renewcommand{\d}{d}
\newcommand{\C}{C}

\newtheorem{theorem}{Theorem}
\newtheorem{lemma}{Lemma}
\newtheorem{definition}{Definition}

\newtheorem{case}{Case}

\newtheorem{claim}{Claim}

\newtheorem{fact}{Fact}

\newtheorem{conjecture}{Conjecture}
\theoremstyle{definition}
\newtheorem{remark}{Remark}

\begin{document}
\title{\bf Enomoto and Ota's conjecture holds for large graphs}
\author{Vincent Coll\footnote{Department of Mathematics, Lehigh University, Bethlehem, PA, USA. {\tt vec208@lehigh.edu}}, Alexander Halperin\footnote{Department of Mathematics and Computer Science, Salisbury University, Salisbury, MD, USA. {\tt adhalperin@salisbury.edu}}, Colton Magnant\footnote{Department of Mathematical Sciences, Georgia Southern University, Statesboro, GA, USA.  {\tt cmagnant@georgiasouthern.edu}}, Pouria Salehi-Nowbandegani\footnote{Department of Mathematical Sciences, Georgia Southern University, Statesboro, GA, USA. {\tt pouria\underline{~}salehi-nowbandegani@georgiasouthern.edu}}}
\maketitle
% \thispagestyle{empty}
% \pagestyle{myheadings}
% \markboth{C. Magnant, P. Salehi-Nowbandegani}{Monochromatic star-free Ramsey numbers}

\begin{abstract}
In 2000, Enomoto and Ota conjectured that if a graph $G$ satisfies $\sigma_{2}(G) \geq n + k - 1$, then for any set of $k$ vertices $v_{1}, \dots, v_{k}$ and for any positive integers $n_{1}, \dots, n_{k}$ with $\sum n_{i} = |G|$, there exists a partition of $V(G)$ into $k$ paths $P_{1}, \dots, P_{k}$ such that $v_{i}$ is an end of $P_{i}$ and $|P_{i}| = n_{i}$ for all $i$.  We prove this conjecture when $|G|$ is large.  Our proof uses the Regularity Lemma along with several extremal lemmas, concluding with an absorbing argument to retrieve misbehaving vertices.
\end{abstract}

\section{Introduction}

For all basic definitions and notation, see~\cite{CLZ11}. Let $\sigma_2(G)$ denote the minimum degree sum of a graph $G$. In 2000, Enomoto and Ota conjectured the following and proved several cases.

\begin{conjecture}[Enomoto and Ota \cite{EO00}]\label{Conj:E-O}
Given an integer $k \geq 3$, let $G$ be a graph of order $n$ and let $n_{1}, n_{2}, \dots, n_{k}$ be a set of $k$ positive integers with $\sum n_{i} = n$.  If $\sigma_{2}(G) \geq n + k - 1$, then for any $k$ distinct vertices $x_{1}, x_{2}, \dots, x_{k}$ in $G$, there exists a set of vertex-disjoint paths $P_{1}, P_{2}, \dots, P_{k}$ such that $|P_{i}| = n_{i}$ and $P_{i}$ starts at $x_{i}$ for all $i$ with $1 \leq i \leq k$.
\end{conjecture}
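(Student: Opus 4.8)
The plan is to prove Conjecture~\ref{Conj:E-O} for every $n$ larger than a threshold $n_{0}=n_{0}(k)$ by the standard machinery for \emph{exact} degree-condition results on large graphs: Szemer\'edi's Regularity Lemma, a dichotomy between a \emph{non-extremal} case and an \emph{extremal} case according to how close $G$ is, in edit distance, to the tight examples for the bound $\sigma_{2}(G)\ge n+k-1$, and an absorbing argument to mop up the few vertices that regularity leaves uncontrolled. One fixes a hierarchy $0<\varepsilon\ll d\ll\alpha\ll 1/k$ together with an extremality parameter. The tight example to keep in mind is $G_{0}=K_{k}+(K_{a}\cup K_{b})$, i.e.\ two large cliques sharing exactly $k$ vertices: one checks $\sigma_{2}(G_{0})=n+k-2$, and taking $x_{1},\dots,x_{k}$ to be the $k$ shared vertices and $n_{1}=n-k+1$, $n_{2}=\dots=n_{k}=1$ defeats the conclusion, since a path of order $n_{1}$ starting at a shared vertex cannot reach both cliques --- so a single unit of slack in the degree sum already matters. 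The extremal case collects the graphs close to configurations of this type: roughly, $G$ has a vertex cut of size about $k$ whose removal leaves two nearly complete, linear-sized pieces. Two easy reductions: a part with $n_{i}=1$ is forced, so one simply reserves the vertex $x_{i}$; and one may relabel so that $n_{1}\ge n_{2}\ge\dots\ge n_{k}$.

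\textbf{Non-extremal case.} Apply the Regularity Lemma to get clusters $V_{1},\dots,V_{M}$ and an exceptional set $V_{0}$ with $|V_{0}|\le\varepsilon n$, and let $R$ be the reduced graph (an edge joining two clusters forming an $\varepsilon$-regular pair of density at least $d$). The hypothesis $\sigma_{2}(G)\ge n+k-1$ yields the corresponding lower bounds on degrees and degree sums in $R$; because $G$ is far from the extremal configuration, $R$ is not merely connected but robustly so, hence contains a convenient spanning connected substructure (a spanning tree of bounded maximum degree, or a Hamilton cycle) along which path-lengths can be transported between clusters, and after deleting a few clusters each surviving regular pair can be made super-regular. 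The construction is then: (i) reserve inside the regular pairs, for each index $i$, a small absorbing gadget --- a short path that can swallow any few leftover vertices while preserving its endpoints and parity, using that regular pairs contain paths of essentially every length up to the number of vertices they span; (ii) route each prescribed endpoint $x_{i}$ by a short, pairwise disjoint path from $x_{i}$ into the structured part (if $x_{i}\in V_{0}$ its large degree still puts a neighbour in some cluster); (iii) partition the clusters, together with the slack inside them, into $k$ batches so that batch $i$ contains exactly the number of free vertices that $n_{i}$ requires after accounting for the reserved gadgets and the connectors; (iv) realize each $P_{i}$ as a Hamilton path of the union of its batch with the prescribed endpoint, via the Blow-up Lemma or a direct regularity embedding, the flexibility of regular pairs delivering the exact order $n_{i}$; and (v) absorb every vertex of $V_{0}$ and every unused reserved vertex into the $P_{i}$.

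\textbf{Extremal case.} Here $G$ differs in at most $\alpha n$ edges, or after deleting at most $\alpha n$ vertices, from a configuration as above: there is a partition of $V(G)$ into a bounded number of parts with all internal and mutual densities near $0$ or $1$, and only a sublinear set of vertices misbehaves. One argues directly. In the ``two nearly complete pieces $A,B$ glued along a small set $S$'' scenario, one first decides for each $x_{i}$ which piece its path will live in, uses the few vertices of $S$ as the only available bridges between $A$ and $B$, and greedily grows the $P_{i}$ inside the almost complete pieces, inserting the sublinear set of atypical vertices as the paths are built; the place where the bound $n+k-1$ rather than $n+k-2$ is essential is exactly the accounting that lets every path which must cross from $A$ to $B$ find a free bridge vertex. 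Any further extremal family on the list (for instance one forced by a near-complete-bipartite structure, where the parity of path-orders interacts with the two sides) is handled by an analogous, purely ad hoc argument.

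\textbf{Main obstacle.} The chief difficulties are two. First, the extremal analysis: because we must hit the orders $n_{i}$ on the nose and start exactly at the $x_{i}$, each extremal configuration needs tight, case-heavy bookkeeping to certify that one unit of degree sum above $n+k-2$ suffices --- assembling the complete list of extremal configurations and verifying each is where most of the work lies. Second, in the non-extremal case, ensuring that the $k$ connectors out of the $x_{i}$ --- some possibly in $V_{0}$, possibly adjacent to the same few clusters, possibly mutually adjacent --- can be chosen short and pairwise disjoint without wrecking the length bookkeeping of steps (iii)--(v); the robust connectivity extracted from non-extremality, used with a careful order of choices, is what makes this go through.
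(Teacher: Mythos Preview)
Your high-level plan --- Regularity Lemma, an extremal/non-extremal dichotomy, and absorbing --- is the same framework the paper uses, but the way the dichotomy is cut and the way the non-extremal case is executed differ in instructive ways.

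The paper does \emph{not} organise the extremal analysis around edit-distance to a single tight example. Instead it isolates three concrete situations and dispatches each by an elementary argument: (i) $\delta(G)\le n_{k}/8$, handled via Williamson's panconnectivity theorem on the very dense remainder; (ii) the reduced graph $R$ has $\kappa(R)\le 1$, which forces $G$ itself to split as two nearly complete pieces across a small cut (this is your ``two cliques glued along a small set'' scenario); and (iii) $\alpha(G)\ge(\tfrac12-\epsilon)n$, the near-complete-bipartite case you allude to. Your plan essentially covers (ii) and gestures at (iii), but you do not separate out (i); the paper needs that lemma so that in the main argument every garbage-set vertex has linearly many edges into the structured part, which is what makes absorbing possible.

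In the non-extremal case the paper's structural backbone is more specific than ``a spanning tree or Hamilton cycle in $R$'': once $R$ is $2$-connected, Ore's theorem applied to $\sigma_{2}(R)\ge(1-2d-4\epsilon)|R|$ gives a cycle through all but $O(d)|R|$ clusters, and an alternating red/blue colouring of this cycle picks out the pairs to be made super-regular. The absorbing step then has a twist that your outline does not anticipate: for each leftover vertex $v$ one tries to build a short balancing $v$-absorbing path through a couple of super-regular pairs, and the key observation is that if this \emph{fails} then a linear fraction of the clusters are forced to be pairwise non-adjacent, i.e.\ $G$ has a large independent set --- at which point the paper invokes lemma (iii) above and is done. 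So the independent-set extremal case is not merely one item on a list of tight configurations; it is exactly the obstruction to absorbing, and identifying it as such is what closes the loop. Your plan would work in principle, but the paper's tripartite split of the extremal world, together with this ``absorbing-fails $\Rightarrow$ large independent set'' link, is what keeps the bookkeeping you flag as the main obstacle from becoming unmanageable.
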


A partial solution was provided by Magnant and Martin in the sense that the path lengths could only be prescribed within a small fraction of $n$.

\begin{theorem}[Magnant and Martin \cite{MM10}]
Given an integer $k \geq 3$, for every set of $k$ positive real numbers $\eta_{1}, \dots \eta_{k}$ with $\sum_{i = 1}^{k} \eta_{i} = 1$, and for every $\epsilon > 0$, there exists $n_{0}$ such that for every graph $G$ of order $n \geq n_{0}$ with $\sigma_{2}(G) \geq n + k - 1$ and for every choice of $k$ vertices $S = \{ x_{1}, \dots, x_{k} \} \subseteq V(G)$, there exists a set of vertex disjoint paths $P_{1}, \dots, P_{k}$ which span $V(G)$ with $P_{i}$ beginning at the vertex $x_{i}$ and $(\eta_{i} - \epsilon)n < |P_{i}| < (\eta_{i} + \epsilon)n$.  Also the condition on $\sigma_{2}(G)$ is sharp.
\end{theorem}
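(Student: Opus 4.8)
The plan is to pass from $G$ to the reduced graph via the Szemer\'edi Regularity Lemma and then to split into two cases: either the reduced graph is well connected and Hamiltonian, and a blow-up argument together with absorption of a bounded number of misbehaving vertices produces the $k$ paths, or $G$ is structurally close to the configuration that witnesses sharpness, in which case a direct combinatorial argument finishes the job.

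\textbf{Setup.} Fix constants $1/n_0 \ll \epsilon' \ll d \ll \epsilon, 1/k$ and apply the Regularity Lemma with parameters $\epsilon', d$ to obtain an exceptional set $V_0$ with $|V_0| < \epsilon' n$, clusters of common size $L$, and a reduced graph $R$ on $t$ vertices whose edges are the $\epsilon'$-regular pairs of density at least $d$. A standard averaging computation transfers the hypothesis to $\sigma_2(R) \ge (1-\gamma)t$ with $\gamma = \gamma(\epsilon', d)$ small. Separately, note directly from $\sigma_2(G) \ge n+k-1$ that $G$ cannot have a vertex cut of size at most $k$, since two vertices on opposite sides of such a cut would form a non-adjacent pair of degree sum at most $n+k-2$; hence $G$ is $(k+1)$-connected, which is what will let us attach the prescribed start vertices and, in the extremal case, bridge between dense pieces.

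\textbf{Non-extremal case.} By a stability form of Ore's theorem, either $R$ is Hamiltonian (and, not being close to an extremal configuration, is in fact well connected), or $R$ is within $o(t)$ edge modifications of $K_{\lceil t/2\rceil} \cup K_{\lfloor t/2\rfloor}$ or of $K_a + \overline{K_b}$, which I fold into the extremal case. Given a Hamilton cycle of $R$, for each $i$ I locate a cluster $W_{f(i)}$ into which $x_i$ has $\Omega(L)$ neighbours (possible since $x_i$ has linearly many neighbours, or after a bounded detour supplied by $(k+1)$-connectivity), then cut the cycle into $k$ consecutive arcs $A_1, \dots, A_k$ so that $A_i$ contains $W_{f(i)}$ near its start and its clusters jointly hold $n_i \pm \epsilon' n$ vertices of $G$; here it is crucial that $k$ is a constant while $t \to \infty$, and a few chords of $R$, available because $R$ is dense in this case, let $P_i$ enter its arc at $W_{f(i)}$ and still sweep the whole arc. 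Blowing up each arc along its regular pairs produces a path through all of the arc's clusters, with one cluster per arc left partially filled so the exact target $n_i$ can be hit (the slack $\epsilon n$ dominates $\epsilon' n$). Finally I attach each $x_i$ and absorb $V_0$: each such vertex is spliced into a nearby path through an edge into an as-yet-unsaturated cluster, and since $|V_0| + k = o(n)$ this perturbs each path length by $o(n)$, well within tolerance; reserving a short absorbing segment in each arc makes the splicing precise.

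\textbf{Extremal case and sharpness.} Here, up to $o(n)$ vertices, $V(G)$ is a union $X \cup Y$ of two near-complete graphs joined by only $o(n^2)$ edges, or a near-clique $X$ dominating a near-independent set $Y$. In a near-complete graph one builds a path of any prescribed length between any two prescribed endpoints lying outside a bounded set, so the remaining work is arithmetic together with routing: decide how much of each $n_i$ to realize inside $X$ and how much inside $Y$, bridge $X$ and $Y$ using the internally disjoint $X$--$Y$ paths guaranteed by $(k+1)$-connectivity, and accommodate the start vertices $x_i$, which may lie in $X$, in $Y$, or among the $o(n)$ leftover vertices. A short case analysis on the distribution of $x_1, \dots, x_k$ among $X$, $Y$ and the leftover set, together with the partition arithmetic for the $n_i$, closes this case; the same bookkeeping, run against a graph with $\sigma_2 = n+k-2$ and adversarially chosen $n_i$ and $x_i$ (the construction of~\cite{MM10}), produces an instance with no valid partition, establishing sharpness. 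The main obstacle is exactly this extremal case: once $G$ is two dense blobs joined by a bounded-width bridge, the prescribed lengths and prescribed endpoints constrain one another, and one must check that the bridge is always wide enough and the degree surplus always large enough for the required split --- which is precisely the computation that pins the threshold at $n+k-1$.
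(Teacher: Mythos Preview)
The paper does not contain a proof of this statement: it is quoted as a prior result of Magnant and Martin~\cite{MM10}, and the present paper supersedes it by proving the exact Enomoto--Ota conjecture for large~$n$ (Theorem~\ref{Thm:Main}). So there is no ``paper's own proof'' to compare your proposal against.

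That said, your outline is essentially the same strategy the paper uses for its main theorem. The paper applies the Regularity Lemma, transfers the $\sigma_2$ bound to the reduced graph via Theorem~\ref{Thm:RDegreeSum}, and then handles three extremal cases separately (small minimum degree, low connectivity of $R$, large independent set) before running the long-cycle-plus-absorption argument in the non-extremal case. Your sketch compresses the extremal analysis into a single ``two dense blobs or clique-plus-independent-set'' case, which is coarser than what the paper needs for exact path lengths but is adequate for the approximate version you are proving, since the $\epsilon n$ slack absorbs all the bookkeeping that the paper has to do precisely. The absorption of $V_0$ and the attachment of the $x_i$ via $(k+1)$-connectivity are likewise present in both arguments.

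One point to be careful about: your non-extremal case assumes a Hamilton cycle in $R$ and then cuts it into arcs, but Ore's theorem only gives a cycle of length $\sigma_2(R) \ge (1-\gamma)t$, not a Hamilton cycle; the paper handles the missing $\gamma t$ clusters by throwing them into the garbage set. Your sketch glosses over this, but the fix is the same and costs only $o(n)$ vertices, which again the $\epsilon n$ tolerance covers. As a proof plan for the approximate theorem your outline is sound; for a full write-up you would need to make the arc-cutting and the extremal arithmetic explicit.
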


When $n$ is sufficiently large relative to $k$, we prove that Conjecture~\ref{Conj:E-O} holds.

\begin{theorem}\label{Thm:Main}
Given an integer $k \geq 3$, let $G$ be a graph of sufficiently large order $n$ and let $n_{1}, n_{2}, \dots, n_{k}$ be a set of $k$ positive integers with $\sum n_{i} = n$.  If $\sigma_{2}(G) \geq n + k - 1$, then for any $k$ distinct vertices $x_{1}, x_{2}, \dots, x_{k}$ in $G$, there exists a set of vertex disjoint paths $P_{1}, P_{2}, \dots, P_{k}$ such that $|P_{i}| = n_{i}$ and $P_{i}$ starts at $x_{i}$ for all $i$ with $1 \leq i \leq k$.
\end{theorem}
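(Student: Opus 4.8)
The plan is to prove the theorem by a dichotomy. Fix a small constant $\alpha>0$ and call $G$ \emph{extremal} if it is within $\alpha n^2$ edges of one of the graphs witnessing sharpness of the bound $\sigma_2\ge n+k-1$; roughly, these are a bounded number of near-cliques joined by only about $k$ cross edges, together with their degenerate relatives. Non-extremal graphs are handled by the Regularity Lemma together with an absorbing argument, extremal ones by direct structural analysis. We use freely that $\sigma_2(G)\ge n+k-1\ge n+2$ forces $G$ to be Hamiltonian-connected (Ore), hence $2$-connected, and that the vertices of $G$ of degree less than $(n+k-1)/2$ form a clique, which is small whenever $G$ is non-extremal; in particular all but $O(1)$ of the $x_i$ have large degree. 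We may also assume each $n_i$ exceeds a fixed constant, since a path of bounded order with a prescribed endpoint can be peeled off greedily around $x_i$, changing $n$ by only $O(1)$.

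\textbf{Non-extremal case.} Choose constants $\epsilon\ll d\ll 1/k$ and apply the Regularity Lemma, obtaining clusters $V_0,V_1,\dots,V_M$ and the reduced graph $R$. A routine computation shows that $R$ inherits the degree-sum condition up to a negligible error, so $\sigma_2(R)\ge M-o(M)$; since $G$ is non-extremal the obstructions to Hamiltonicity of $R$ are ruled out, and a stability version of Ore's theorem gives a Hamilton cycle $C_R$ in $R$, after discarding $o(M)$ clusters and replacing consecutive pairs by super-regular ones. Before embedding anything we reserve along $C_R$, for each $i$, a short \emph{absorbing path} $Q_i$ with the property that $Q_i\cup W$ can be rearranged into a path with the same two endpoints and $|Q_i|+|W|$ vertices for every sufficiently small set $W$ of large-degree vertices; such $Q_i$ exist because a large-degree vertex has neighbours in all but $o(M)$ clusters. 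We then place $x_1,\dots,x_k$ into appropriate clusters (using their large degree, or the connecting clique for an exceptional low-degree $x_i$) and, travelling around $C_R$ with the standard greedy embedding of paths inside super-regular pairs, build vertex-disjoint paths $P_1',\dots,P_k'$ such that $P_i'$ starts at $x_i$, contains $Q_i$, has order within $\epsilon n$ of $n_i$, and together they cover all of $V(G)$ except a set $W$ with $|W|\le 2\epsilon n$ (namely $V_0$, the cluster remainders, and the discarded clusters). Finally we distribute $W$ among the paths and absorb it with the $Q_i$; now nothing is left over and the errors $|P_i|-n_i$ are $O(\epsilon n)$ with zero sum, and we correct them by shifting the cut points between consecutive paths around $C_R$, moving whole clusters from one path to the next, then using the residual slack of the absorbers for the last handful of vertices. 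This yields paths $P_1,\dots,P_k$ with $P_i$ starting at $x_i$ and $|P_i|=n_i$ exactly.

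\textbf{Extremal case, and the main difficulty.} When $G$ is extremal one reads off its near-cliques $A_1,\dots,A_t$ and the $O(k)$ cross edges and routes the paths by hand, according to which $x_i$ lie in which $A_j$ and to the sizes $n_i$: a path confined to a near-clique on $m$ vertices may be taken of any order up to $m$ through any prescribed pair of endpoints, and the few cross edges, cut vertices, and exceptional low-degree vertices are incorporated one at a time. A bounded number of sub-cases, depending on $t$, on how the $x_i$ are spread among the parts, on whether some part is too small to carry its required vertices, and on whether some $n_i$ is tiny, exhausts the possibilities, each handled by an explicit greedy construction. I expect the two real obstacles to be precisely: (i) in the non-extremal case, turning the approximately-correct path partition delivered by regularity into one with the \emph{exact} orders $n_i$ while preserving the prescribed endpoints, which is what the absorbers and the cut-shifting along $C_R$ are designed for; and (ii) in the extremal case, the bookkeeping when many of the $x_i$ crowd into a single part, or when a part's size together with its incident cross edges only just suffices to realize the required path orders.
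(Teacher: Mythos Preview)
Your overall architecture---Regularity Lemma plus absorbing for the non-extremal case, direct structural analysis for the extremal one---matches the paper's. But your extremal dichotomy is miscalibrated, and this creates a real gap.

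You describe the extremal graphs as ``a bounded number of near-cliques joined by only about $k$ cross edges.'' The paper instead isolates three separate extremal configurations: (i) $\delta(G)\le n_k/8$; (ii) the reduced graph $R$ has connectivity at most $1$; and (iii) $\alpha(G)\ge(\tfrac12-\epsilon)n$. Case (ii) is indeed your near-cliques picture, and (i) is close to it, but case (iii) is a \emph{near-bipartite} structure: an independent set $A$ of size roughly $n/2$, with $B=V\setminus A$ carrying just enough internal edges to make up the degree sum. This is not a union of near-cliques, and your ``degenerate relatives'' clause is too vague to cover it. The paper spends an entire section (Lemma~\ref{Lemma:IndepSet}) on this case, tracking parities, building a path $Q'$ that absorbs the excess of $B$ over $A$ together with the low-$A$-degree vertices of $B$, and then using a bipanconnectivity lemma to thread the $P_i$ through. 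None of that machinery appears in your proposal.

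More importantly, the near-bipartite case is not merely one branch of the dichotomy: in the paper it is invoked \emph{inside} the absorbing argument of the main (non-extremal) proof. In Claim~\ref{Claim:Absorbing}, one shows that a garbage vertex $v$ admits a short absorbing path unless a certain large set of clusters $X_{AB}'$ is independent in $G$, at which point Lemma~\ref{Lemma:IndepSet} finishes the proof directly. Your absorbers $Q_i$ are asserted to exist ``because a large-degree vertex has neighbours in all but $o(M)$ clusters,'' but having neighbours in many clusters does not by itself give a balancing absorbing path through a super-regular pair; the paper's analysis of $X_A$, $X_B$, $X_{AB}$ is exactly where this can fail, and the failure mode is the large independent set. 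Without an analogue of Lemma~\ref{Lemma:IndepSet} your absorbing step is incomplete.

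Two smaller points. First, your peeling reduction (``assume each $n_i$ exceeds a fixed constant'') does not preserve the hypothesis: removing an $n_i$-vertex path can drop $\sigma_2$ by $2n_i$, while you only gain $n_i+1$ of slack, so this works cleanly only for $n_i=1$. The paper handles short paths inside the main construction instead. Second, you appeal to a ``stability version of Ore's theorem'' to get a Hamilton cycle in $R$; the paper simply applies Ore's theorem to the $2$-connected $R$ to get a cycle of length at least $\sigma_2(R)\ge(1-2d-4\epsilon)|R|$, throws the missing clusters into the garbage, and absorbs them later---no stability needed.
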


Our proof utilizes several extremal lemmas based on the structure of the reduced graph provided by the Regularity Lemma.  Our lemmas deal with the cases where the minimum degree is small, the reduced graph has a large independent set and the connectivity of the reduced graph is small.

\section{Preliminaries}

Given two sets of vertices $A$ and $B$, let $E(A, B)$ denote set of edges with one end in $A$ and one end in $B$ and let $e(A, B) =
|E(A, B)|$.  Define the \emph{density between $A$ and $B$} to be
\begin{equation*}
d(A, B) = \frac{e(A, B)}{|A||B|}.
\end{equation*}

\begin{definition}
Let $\epsilon > 0$.  Given a graph $G$ and two nonempty disjoint vertex sets $A, B \subset V$, we say that the pair $(A, B)$ is \emph{$\epsilon$-regular} if for every $X \subset A$ and $Y \subset B$ satisfying
\begin{equation*}
|X| > \epsilon |A| ~ ~ ~ \text{and} ~ ~ ~ |Y| > \epsilon |B|
\end{equation*}
we have
\begin{equation*}
|d(X, Y) - d(A, B)| < \epsilon.
\end{equation*}
\end{definition}

We will also use the following one-sided but stronger version of regularity.

\begin{definition}
Let $\epsilon, \d > 0$.  Given a graph $G$ and two nonempty disjoint vertex sets $A, B \subset V$, we say that the pair $(A,B)$ is \emph{$(\epsilon, \d)$-super-regular} if for every $X \subset A$ and $Y \subset B$ satisfying
\begin{equation*}
|X| > \epsilon |A| ~ ~ ~ \text{and} ~ ~ ~ |Y| > \epsilon |B|,
\end{equation*}
we have
\begin{equation*}
e(X, Y) > \d|X| |Y|,
\end{equation*}
and furthermore $d_{B}(a) > \d|B|$ for all $a \in A$ and $d_{A}(b) > \d|A|$ for all $b \in B$.
\end{definition}

The following is the famous Regularity Lemma of Szemer{\'e}di.

\begin{lemma}[Regularity Lemma - Szemer{\'e}di \cite{S78}]\label{Lemma:Regularity}
For every $\epsilon > 0$ and every positive integer $m$, there is an $M = M(\epsilon)$ such that if $G$ is any graph and $\d \in(0,1)$ is any real number, then there is a partition of $V(G)$ into $\r+1$ clusters $V_{0},V_{1},\dots,V_{\r}$, and there is a subgraph $G^\prime \subseteq G$ with the following properties:
\begin{itemize}
\item[(1)] $m \leq \r \leq M$,\label{Item:Clusters} 
\item[(2)] $|V_{0}|\leq \epsilon|V(G)|$,\label{Item:Garbage size} 
\item[(3)] $|V_{1}| = \dots = |V_{\r}|=L \leq
\epsilon|V(G)|$,\label{Item:Cluster size} 
\item[(4)] $\deg_{G^\prime}(v)>\deg_{G}(v)-(\d+\epsilon)|V(G)|$ for all $v\in
V(G)$,\label{Item:Degree size} 
\item[(5)] $e(G^\prime[V_i]) = 0$ for all $i \geq 1$,\label{Item:Partite cluster} 
\item[(6)] for all $1\leq i < j\leq \r$ the graph $G^\prime[V_{i},V_{j}]$ is $\epsilon$-regular and has density either $0$ or greater than $\d$.\label{Item:Density d or 0}
\end{itemize}
\end{lemma}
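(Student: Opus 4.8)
The statement to be proved is Szemer\'edi's Regularity Lemma in its ``degree form.'' The plan is to first establish the standard \emph{weak form} of the lemma by the energy-increment (index) argument, and then to deduce the degree form---conditions (4)--(6) together with the cleaned spanning subgraph $G^\prime$---by a short cleaning argument. Throughout write $n = |V(G)|$.

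\textbf{Step 1: the energy increment.} For a partition $\mathcal{P} = \{W_1, \dots, W_t\}$ of a subset of $V(G)$, define its \emph{energy} (or index)
\[
q(\mathcal{P}) \;=\; \sum_{i < j} \frac{|W_i|\,|W_j|}{n^2}\, d(W_i, W_j)^2,
\]
and observe that $0 \le q(\mathcal{P}) \le \tfrac12$, since $\sum_{i<j}|W_i||W_j| \le \tfrac12 n^2$. Two facts, both consequences of the Cauchy--Schwarz inequality (the second in its ``defect'' form), drive the proof: (i) if $\mathcal{Q}$ refines $\mathcal{P}$ then $q(\mathcal{Q}) \ge q(\mathcal{P})$; and (ii) if $\mathcal{P}$ is an equipartition into $\r$ parts that is \emph{not} $\epsilon$-regular---meaning that more than $\epsilon \r^2$ of the pairs $(W_i,W_j)$ fail to be $\epsilon$-regular---then, choosing for each such pair a witnessing pair of subsets and passing to the common refinement of all of them, one obtains a refinement $\mathcal{Q}$ with at most $\r \, 4^{\r}$ parts satisfying $q(\mathcal{Q}) \ge q(\mathcal{P}) + \epsilon^{5}$. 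I expect this increment lemma---really the defect Cauchy--Schwarz estimate that extracts a quantitative energy gain from each irregular pair---to be the technical heart of the argument.

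\textbf{Step 2: iteration to the weak form.} Start from an arbitrary equipartition of $V(G) \setminus V_0$ into some number $m' \ge \max(m, \lceil 1/\epsilon^{\prime} \rceil)$ of parts, where $\epsilon^{\prime} > 0$ is a small parameter (depending only on $\epsilon$) to be fixed in Step 3, moving at most $m'$ leftover vertices into $V_0$. Repeatedly apply (ii) with parameter $\epsilon^{\prime}$: each application increases the energy by at least $(\epsilon^{\prime})^{5}$, so after at most $\lceil (\epsilon^{\prime})^{-5}/2 \rceil$ steps the partition is $\epsilon^{\prime}$-regular. Each step multiplies the number of parts by at most $4^{(\cdot)}$, so the number of clusters remains bounded by a tower-type function $M = M(\epsilon, m)$, giving (1); when we rebalance to equal cluster size $L$ at each step we add at most (current number of parts) vertices to $V_0$, and since the number of steps and parts depends only on $\epsilon, m$, for $n$ large we keep $|V_0| \le \epsilon^{\prime} n$ and $L \le \epsilon^{\prime} n$, establishing (2) and (3). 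The output is an $\epsilon^{\prime}$-regular equipartition $V_0, V_1, \dots, V_{\r_0}$ in which all but at most $\epsilon^{\prime} \r_0^2$ pairs $(V_i, V_j)$ are $\epsilon^{\prime}$-regular.

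\textbf{Step 3: cleaning to the degree form.} Fix $\epsilon^{\prime} > 0$ small enough that $2\epsilon^{\prime} + 3\sqrt{\epsilon^{\prime}} < \epsilon$ (so in particular $\epsilon^{\prime} < \epsilon$), and run Step 2 with this $\epsilon^{\prime}$. First move into $V_0$ every cluster lying in more than $\sqrt{\epsilon^{\prime}}\,\r_0$ irregular pairs; since there are at most $\epsilon^{\prime} \r_0^2$ irregular pairs in total, at most $2\sqrt{\epsilon^{\prime}}\,\r_0$ clusters get discarded, so $|V_0|$ grows by at most $2\sqrt{\epsilon^{\prime}}\,n$ and remains below $\epsilon n$, while the surviving clusters---relabel them $V_1, \dots, V_{\r}$, with $\r \ge m$ if $m'$ was chosen slightly larger than $m$---each lie in at most $\sqrt{\epsilon^{\prime}}\,\r$ irregular pairs. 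Now form $G^\prime \subseteq G$ by deleting all edges inside clusters, all edges meeting $V_0$, all edges in an irregular pair, and all edges in a pair of density at most $\d$. Then (5) holds by construction, and (6) holds because each surviving pair $(V_i,V_j)$ either kept all its edges---in which case it was $\d$-dense and $\epsilon^{\prime}$-regular, hence $\epsilon$-regular of density $>\d$, as the other deletions do not touch it---or was emptied and so has density $0$ (trivially $\epsilon$-regular). For (4), a vertex $v \in V_i$ loses at most $L \le \epsilon^{\prime} n$ edges inside $V_i$, at most $|V_0| \le (\epsilon^{\prime}+2\sqrt{\epsilon^{\prime}})\,n$ edges to $V_0$, at most $\d \cdot \r L \le \d n$ edges to low-density clusters, and---crucially, thanks to the cleaning of overly irregular clusters---at most $\sqrt{\epsilon^{\prime}}\,\r \cdot L \le \sqrt{\epsilon^{\prime}}\,n$ edges to clusters forming an irregular pair with $V_i$; the total loss is at most $\d n + (2\epsilon^{\prime}+3\sqrt{\epsilon^{\prime}})\,n < (\d + \epsilon)\,n$, which is exactly (4). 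Besides the defect Cauchy--Schwarz step, the only delicate point is this per-vertex degree bookkeeping, where naively counting irregular pairs globally is not enough and the preliminary removal of ``bad'' clusters into $V_0$ is what makes the argument go through.
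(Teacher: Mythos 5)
The paper does not prove this statement: it is quoted verbatim as Szemer\'edi's Regularity Lemma (degree form) from \cite{S78} and used as a black box, so there is no in-paper proof to compare against. Your Steps 1 and 2 are the standard energy-increment argument and are fine as a sketch; the problem is in Step 3, specifically in the per-vertex degree bookkeeping for condition (4), which you correctly identify as the delicate point but do not actually carry through.

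First, for the low-density pairs you assert that $v\in V_i$ loses ``at most $\d\cdot\r L\le \d n$'' edges, which implicitly uses the per-pair bound $|N(v)\cap V_j|\le\d L$ whenever $d(V_i,V_j)\le\d$. That bound is false: density at most $\d$ only controls $e(V_i,V_j)\le\d L^2$, and a single vertex may have all $L$ of its neighbors in $V_j$ while contributing only $1/L$ to the density. Summed over up to $\r-1$ low-density pairs, such a vertex can lose nearly $n$ edges, so (4) fails for it under your construction. The repair is a further cleaning step of a different kind from the cluster-level one you perform for irregular pairs: for each $\epsilon'$-regular pair $(V_i,V_j)$ of density $d_{ij}$, regularity implies that at most $\epsilon'L$ vertices of $V_i$ have more than $(d_{ij}+\epsilon')L$ neighbors in $V_j$; hence at most $\sqrt{\epsilon'}L$ vertices of $V_i$ are exceptional for more than $\sqrt{\epsilon'}\r$ indices $j$, and these individual vertices must be moved into $V_0$ (after which the clusters must be re-trimmed to a common size to preserve (3), and the slightly shrunken pairs re-checked for regularity with adjusted constants). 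Only for the surviving vertices does the loss to low-density pairs come out to at most $\bigl(\d+O(\sqrt{\epsilon'})\bigr)n$. Second, you form $G'$ by deleting ``all edges meeting $V_0$,'' which forces $\deg_{G'}(v)=0$ for every $v\in V_0$ and so violates (4) for any high-degree vertex that lands in $V_0$. Conditions (5) and (6) constrain only $G'[V_i]$ for $i\ge1$ and $G'[V_i,V_j]$ for $1\le i<j\le\r$, so the correct construction keeps all edges incident to $V_0$; that choice is exactly what makes (4) hold for all of $V(G)$ rather than only for $V(G)\setminus V_0$.
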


Given a graph $G$ and appropriate choices of $\epsilon$ and $\d$, let $G^\prime$ be a spanning subgraph of $G$ obtained from Lemma~\ref{Lemma:Regularity}. The \emph{reduced graph} $R = R(G,\epsilon,\d)$ of $G$ contains a vertex $v_{i}$ for each cluster $V_{i}$ in $G^\prime\setminus V_0$ and has an edge between $v_{i}$ and $v_{j}$ if and only if $d(V_i,V_j) > \d$. Hence, $V(R) = \{v_i \;|\; 1 \leq i \leq \r\}$ and $E(R) = \{v_iv_j\;|\;1 \leq i,j \leq \r, \; d(V_i,V_j) > \d\}$. %See Figure~\ref{fig:Regularity} for an illustration of $G$, $G^\prime$, and $R(G,\epsilon,\d)$.% When $G$ is clear, we write $R$ instead of $R(G,\epsilon,d)$.

Throughout this work, we let $\r = |R|$.

This next lemma allows the creation of a super-regular pair from an $\epsilon$-regular pair by simply removing some vertices.

\begin{lemma}[Diestel \cite{D10}, Lemma 7.5.1]
Let $(A, B)$ be an $\epsilon$-regular pair of density $\d$ and let $Y \subseteq B$ have size $|Y| \geq \epsilon |B|$.  Then all but
at most $\epsilon |A|$ of the vertices in $A$ each have at least $(\d - \epsilon)|Y|$ neighbors in $Y$.
\end{lemma}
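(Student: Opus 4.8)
The plan is a short proof by contradiction straight from the definition of $\epsilon$-regularity. Let $X \subseteq A$ be the set of \emph{bad} vertices, namely those $a \in A$ having fewer than $(\d - \epsilon)|Y|$ neighbors in $Y$; the goal is precisely to show $|X| \leq \epsilon|A|$. So I would assume instead that $|X| > \epsilon|A|$ and derive a contradiction with the regularity of the pair $(A,B)$.

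The first step is to bound $e(X,Y)$ from above. Since every vertex of $X$ sends fewer than $(\d - \epsilon)|Y|$ edges into $Y$, summing this estimate over all vertices of $X$ gives $e(X,Y) < (\d - \epsilon)|X||Y|$, and hence $d(X,Y) < \d - \epsilon$. The second step is to invoke $\epsilon$-regularity of $(A,B)$ applied to the subsets $X$ and $Y$: by our assumption $|X| > \epsilon|A|$, and by hypothesis $|Y| \geq \epsilon|B|$, so (modulo the boundary point addressed below) the definition of $\epsilon$-regularity yields $|d(X,Y) - d(A,B)| < \epsilon$. Since the pair has density $d(A,B) = \d$, this forces $d(X,Y) > \d - \epsilon$, directly contradicting the bound obtained in the first step. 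Therefore $|X| \leq \epsilon|A|$, which is exactly the assertion of the lemma.

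The only point requiring a word of care is that the definition of $\epsilon$-regularity used in this paper asks for the \emph{strict} inequality $|Y| > \epsilon|B|$, whereas the lemma is stated with $|Y| \geq \epsilon|B|$. I would dispose of the borderline case $|Y| = \epsilon|B|$ in one line: either by adopting the (equivalent) convention in which the size threshold is non-strict, or by enlarging $Y$ to some $Y' \subseteq B$ with $|Y'| = |Y| + 1 > \epsilon|B|$, noting that the neighborhood count of each vertex of $A$ in $Y'$ differs from that in $Y$ by at most one, which is absorbed into the statement with no loss. I do not expect any genuine obstacle here; the argument is entirely routine, and indeed the statement is quoted verbatim as Lemma 7.5.1 of Diestel~\cite{D10}, so one could simply cite it. Still, the self-contained two-step contradiction above is short enough to record in full.
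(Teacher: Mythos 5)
The paper gives no proof of this lemma at all---it is quoted directly from Diestel \cite{D10}---and your two-step contradiction (bound $d(X,Y)<\d-\epsilon$ for the set $X$ of bad vertices, then contradict $|d(X,Y)-d(A,B)|<\epsilon$) is exactly the standard argument from that source, and it is correct. The only caveat, which you already flag, is the strict-versus-non-strict size threshold in this paper's definition of $\epsilon$-regularity; that is a harmless convention mismatch (Diestel's own definition is non-strict), and your handling of the borderline case $|Y|=\epsilon|B|$ is acceptable.
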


We use a simple corollary of this result.

\begin{lemma}\label{Lemma:Super-Regular}
Let $(A, B)$ be an $\epsilon$-regular pair of density $\d$.  There exist subsets $A' \subseteq A$ and $B' \subseteq B$ with
$|A'| \geq (1 - \epsilon)|A|$ and $|B| \geq (1 - \epsilon)|B|$ such that the pair $(A', B')$ is $(\epsilon, \d - 2\epsilon)$-super-regular.
\end{lemma}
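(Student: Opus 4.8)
The plan is to obtain $A'$ and $B'$ by simply discarding the vertices of abnormally low degree, applying the preceding lemma once in each direction.

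First I would apply the preceding lemma with $Y = B$ (which trivially satisfies $|Y| \geq \epsilon |B|$): all but at most $\epsilon |A|$ vertices $a \in A$ have $d_{B}(a) \geq (\d - \epsilon)|B|$. Let $A'$ be the set of these vertices, so $|A'| \geq (1 - \epsilon)|A|$. Running the same lemma with the roles of the two sides interchanged and $Y = A$ gives a set $B' \subseteq B$ with $|B'| \geq (1 - \epsilon)|B|$ in which every vertex $b$ has $d_{A}(b) \geq (\d - \epsilon)|A|$. These are the sets we return.

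It then remains to verify the two conditions in the definition of $(\epsilon, \d - 2\epsilon)$-super-regularity. The minimum-degree condition is immediate: for $a \in A'$ we have $d_{B'}(a) \geq d_{B}(a) - |B \setminus B'| \geq (\d - \epsilon)|B| - \epsilon|B| = (\d - 2\epsilon)|B| \geq (\d - 2\epsilon)|B'|$, and symmetrically $d_{A'}(b) \geq (\d - 2\epsilon)|A'|$ for every $b \in B'$; the extra $\epsilon$ lost here is exactly the fraction of the opposite cluster that was thrown away. For the density condition, take any $X \subseteq A'$ and $Y \subseteq B'$ with $|X| > \epsilon |A'|$ and $|Y| > \epsilon |B'|$. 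Since $A' \subseteq A$ and $B' \subseteq B$ comprise almost all of $A$ and $B$, for $\epsilon$ small these sets are still large enough relative to $A$ and $B$ for the $\epsilon$-regularity of the original pair $(A, B)$ to apply, giving $d(X, Y) > d(A, B) - \epsilon = \d - \epsilon$, and hence $e(X, Y) > (\d - \epsilon)|X||Y| \geq (\d - 2\epsilon)|X||Y|$, as required.

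Since everything reduces to the preceding lemma together with elementary counting, there is no genuine obstacle; the one spot that deserves attention is the density condition, where one must check that passing from $A, B$ to the slightly smaller $A', B'$ does not push the test sets $X, Y$ below the size threshold governing $\epsilon$-regularity — this is precisely why the conclusion only asks for $|A'| \geq (1-\epsilon)|A|$ and $|B'| \geq (1-\epsilon)|B|$ rather than anything tighter, and why a mild smallness assumption on $\epsilon$ is implicit throughout.
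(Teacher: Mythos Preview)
Your approach is exactly what the paper intends: it states this lemma as a ``simple corollary'' of the preceding Diestel lemma without giving any proof, and your argument---delete the low-degree vertices on each side using that lemma, then check the two super-regularity conditions---is the standard derivation. The only quibble is that the threshold issue you flag is not actually resolved by taking $\epsilon$ small (since $\epsilon|A'| \geq \epsilon(1-\epsilon)|A|$ is always strictly below $\epsilon|A|$); the honest fix is that $(A',B')$ is $\frac{\epsilon}{1-\epsilon}$-regular, hence $2\epsilon$-regular, which still yields $e(X,Y) > (d-2\epsilon)|X||Y|$---but this is a universally glossed-over wrinkle and the paper certainly does not address it either.
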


Our next lemma follows trivially from the definition of super-regular pairs.

\begin{lemma}\label{Lemma:ShortPathInSuper}
Given an $(\epsilon, \d)$-super-regular pair $(A, B)$ and a pair of vertices $a \in A$ and $b \in B$, there exists a path of length at
most $3$ from $a$ to $b$ in $(A, B)$.
\end{lemma}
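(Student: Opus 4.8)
The plan is to split into two cases according to whether $ab$ is an edge. If $ab \in E(G)$, then $ab$ itself is a path of length $1$ from $a$ to $b$ lying inside the pair $(A,B)$, and there is nothing more to do. So from now on I assume $ab \notin E(G)$ and aim to produce a path of length exactly $3$.

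By the definition of an $(\epsilon,\d)$-super-regular pair, the vertex $a$ has more than $\d|B|$ neighbors in $B$ and the vertex $b$ has more than $\d|A|$ neighbors in $A$. I would set $X = N_A(b) \subseteq A$ and $Y = N_B(a) \subseteq B$. Since in our setting $\epsilon < \d$, we get $|X| > \d|A| > \epsilon|A|$ and $|Y| > \d|B| > \epsilon|B|$, so the defining inequality of super-regularity may be invoked on the sub-pair $(X,Y)$, giving $e(X,Y) > \d|X||Y| > 0$. In particular there is an edge $xy$ of $G$ with $x \in X$ and $y \in Y$, and then $a\,y\,x\,b$ is a walk from $a$ to $b$ using only edges between $A$ and $B$.

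The only remaining point is to confirm that $a$, $y$, $x$, $b$ are four distinct vertices, so that $a\,y\,x\,b$ is a genuine path. Since $A$ and $B$ are disjoint, $a \ne y$, $b \ne x$, $a \ne b$, and $x \ne y$ hold automatically. Also $a \ne x$: otherwise $a = x \in N_A(b)$ would force $ab \in E(G)$, contradicting the case assumption; symmetrically $b \ne y$. Hence $a\,y\,x\,b$ has length $3$, as required. I do not anticipate a real obstacle here: the statement is essentially an unwinding of the definition of super-regularity, and the only things needing a word of care are the mild hypothesis $\epsilon < \d$ (so that $N_A(b)$ and $N_B(a)$ are large enough to feed into the super-regularity condition) and the handful of non-degeneracy checks above.
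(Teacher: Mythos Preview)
Your argument is correct and is exactly the intended one: the paper does not give a proof at all, merely remarking that the lemma ``follows trivially from the definition of super-regular pairs,'' and your case split together with applying the density condition to $X=N_A(b)$ and $Y=N_B(a)$ is precisely that unwinding.
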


The following lemma provides very strong structure in super-regular pairs.

\begin{lemma}[Blow-Up Lemma - Koml{\'o}s, S{\'a}rk{\"o}zy and Szemer{\'e}di \cite{KSS97}]\label{Lemma:Blow-Up}
Given a graph $R$ of order $r$ and positive parameters $\d , \Delta ,$ there exists a positive $\epsilon = \epsilon (\d , \Delta , r)$ such that the following holds. Let $\{n_{1},n_{2},\dots , n_{r}\}$ be an arbitrary set of positive integers and replace the set of vertices $\{v_{1},v_{2},\dots ,v_{r}\}$ of $R$ with pairwise disjoint sets $V_{1}, V_{2}, \dots, V_{r - 1}$ and $V_{r}$ of sizes $n_{1},n_{2}, \dots, n_{r - 1}$ and $n_{r}$  respectively (blowing up).  We construct two graphs on the same vertex-set $V = \cup V_{i}$. The first graph ${\bf R}$ is obtained by replacing each edge ${v_i,v_j}$ of $R$ with the complete bipartite graph between the corresponding vertex-sets $V_i$ and $V_j$.  A sparser graph $G$ is constructed by replacing each edge ${v_i,v_j}$ arbitrarily with an $(\epsilon, \d)$-super-regular pair between $V_i$ and $V_j$. If a graph $H$ with $\Delta (H) \leq \Delta$ is embeddable into ${\bf R}$ then it is also embeddable into $G$.
\end{lemma}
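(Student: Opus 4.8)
I would prove this by adapting the randomized greedy embedding scheme of Koml\'os, S\'ark\"ozy and Szemer\'edi \cite{KSS97}: treat each super-regular pair of $G$ as though it were the corresponding complete bipartite pair of ${\bf R}$ and build an embedding of $H$ into $G$ vertex by vertex, with $\epsilon$ chosen at the very end to be small relative to $\d$, $\Delta$ and $r$. Fix an embedding $\psi$ of $H$ into ${\bf R}$; it partitions $V(H)$ into classes $H_i = \psi^{-1}(V_i)$ with $|H_i| = n_i$, and every edge of $H$ is mapped between two clusters forming a super-regular pair in $G$. In each class reserve a small \emph{buffer} $Y_i \subseteq H_i$, with $|Y_i|$ a tiny constant fraction of $n_i$ and with the $Y_i$'s inducing no edge of $H$; these vertices will be embedded last. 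For every not-yet-embedded $y$ with $\psi(y) = v_i$ maintain its \emph{candidate set} $C(y) \subseteq V_i$: the free vertices of $V_i$ that are $G$-adjacent to $\phi(x)$ for every already-embedded neighbour $x$ of $y$.

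In Phase 1 I would embed the non-buffer vertices one at a time, in a breadth-first-type order, choosing $\phi(x)$ \emph{uniformly at random} from the ``good'' vertices of $C(x)$, where $v \in C(x)$ is good if assigning $\phi(x) = v$ keeps $|C(y) \cap N(v)| \geq (\d - \epsilon)|C(y)|$ for every unembedded neighbour $y$ of $x$. Since each such $C(y)$ is still large (bigger than $\epsilon n_j$ for its cluster), $\epsilon$-regularity implies at most $\epsilon n_i$ vertices of $V_i$ are bad for a fixed $y$, so at most $\Delta\epsilon n_i$ vertices of $C(x)$ are bad overall; because candidate sets shrink by at most a factor $(\d - \epsilon)$ per embedded neighbour, one has $|C(x)| \geq (\d/2)^{\Delta} n_i \gg \Delta \epsilon n_i$, so a good choice always exists. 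The point of randomness (rather than an arbitrary greedy choice) is that one can simultaneously keep, for each still-free $v \in V_i$, the number of buffer vertices of $H_i$ having $v$ in their candidate set close to its expectation; a martingale/large-deviation argument controls both the candidate-set sizes and these ``demand'' counts at once.

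In Phase 2 the only unembedded vertices are the buffers, and exactly $|Y_i|$ free slots remain in each $V_i$. Form, for each $i$, the bipartite graph between $Y_i$ and the free vertices of $V_i$ given by the candidate relation; the lower bounds on $|C(y)|$ and on the ``demand'' counts carried through Phase 1 make this graph satisfy Hall's condition, so it has a perfect matching, which completes the embedding. Because the $Y_i$'s are independent in $H$, the matchings found in different clusters cannot conflict, so no edge of $H$ is left unsatisfied.

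The main obstacle is Phase 1: one must keep \emph{every} candidate set large \emph{and} every free vertex sufficiently demanded throughout a long sequence of random choices, so that Hall's condition is still available in Phase 2. Making the concentration bookkeeping work for all of the (polynomially many) evolving quantities at once, and quantifying exactly how small $\epsilon$ must be in terms of the buffer fraction, $\d$, $\Delta$ and $r$, is the technical heart of the argument.
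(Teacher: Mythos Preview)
The paper does not prove this statement at all: Lemma~\ref{Lemma:Blow-Up} is simply quoted from Koml\'os, S\'ark\"ozy and Szemer\'edi~\cite{KSS97} as a black box in the Preliminaries section, with no proof or sketch given. So there is no ``paper's own proof'' to compare against.

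Your sketch is a faithful high-level outline of the original KSS argument (random greedy embedding of the non-buffer vertices, maintaining large candidate sets via regularity, followed by a K\"onig--Hall matching to place the buffer vertices), and at that level of detail nothing you wrote is wrong. Of course the real work in~\cite{KSS97} is exactly the concentration bookkeeping you flag at the end, and your description leaves that as a promise rather than an argument; but since the present paper treats the Blow-Up Lemma purely as a citation, no more is required here.
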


The following theorem gives us a degree sum condition on $R$ based on our assumed degree sum condition on $G$.

\begin{theorem}[K{\"u}hn, Osthus and Treglown \cite{KOT09}]\label{Thm:RDegreeSum}
Given a constant $c$, if $\sigma_{2}(G) \geq cn$, then $\sigma_{2}(R) \geq (c - 2\d - 4\epsilon)|R|$.
\end{theorem}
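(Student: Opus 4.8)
The plan is to bound each reduced‑graph degree $\deg_R(v_i)$ from below by (essentially) the $G$‑degree of an \emph{arbitrary} vertex of the cluster $V_i$, and then to feed a well‑chosen non‑adjacent pair of vertices into the hypothesis $\sigma_2(G)\ge cn$. Fix a cluster $V_i$ with $i\ge 1$ and an arbitrary vertex $x\in V_i$, and count the $G'$‑neighbours of $x$ cluster by cluster. Property~(5) of Lemma~\ref{Lemma:Regularity} rules out $G'$‑neighbours inside $V_i$; at most $|V_0|\le \epsilon n$ of them lie in $V_0$; and if $x$ has a $G'$‑neighbour in some other cluster $V_k$, then $e(G'[V_i,V_k])\ge 1$, so the density of $G'[V_i,V_k]$ is positive, hence by property~(6) it exceeds $\d$, so $v_iv_k\in E(R)$. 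Since $x$ has at most $L$ neighbours in any single cluster, summing gives $\deg_{G'}(x)\le |V_0|+L\cdot\deg_R(v_i)$, and combining this with property~(4), $\deg_{G'}(x)>\deg_G(x)-(\d+\epsilon)n$, I obtain the pointwise estimate $\deg_R(v_i) > \big(\deg_G(x)-(\d+2\epsilon)n\big)/L$, valid for \emph{every} $x\in V_i$.

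Next, for a non‑edge $v_iv_j$ of $R$ I would choose $x\in V_i$ and $y\in V_j$ that are non‑adjacent in $G$; then $\deg_G(x)+\deg_G(y)\ge\sigma_2(G)\ge cn$, and adding the pointwise estimate for $v_i$ and for $v_j$ yields $\deg_R(v_i)+\deg_R(v_j) > \big(cn-2(\d+2\epsilon)n\big)/L = (c-2\d-4\epsilon)n/L$. Since the clusters partition $V(G)$ we have $|V_0|+\r L=n$, so $\r L\le n$ and $n/L\ge\r$; hence (assuming $c-2\d-4\epsilon>0$, as otherwise there is nothing to prove) the right‑hand side is at least $(c-2\d-4\epsilon)\r$. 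As $v_iv_j$ ranges over all non‑edges of $R$, this gives $\sigma_2(R)\ge (c-2\d-4\epsilon)|R|$.

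The one delicate point---and the main obstacle---is justifying that a non‑adjacent pair $x\in V_i$, $y\in V_j$ exists at all, since a priori $G$ could induce a complete bipartite graph between $V_i$ and $V_j$ even when $v_iv_j\notin E(R)$. I would resolve this by invoking the degree form of Lemma~\ref{Lemma:Regularity}, in which $v_iv_k\in E(R)$ exactly when $(V_i,V_k)$ is an $\epsilon$‑regular pair of density greater than $\d$, and $G'$ is the subgraph of $G$ obtained by deleting all edges inside clusters, all edges meeting $V_0$, and all edges across irregular or low‑density pairs---the standard realisation of properties~(1)--(6). Because a complete bipartite pair is automatically $\epsilon$‑regular of density $1>\d$, a non‑edge of $R$ cannot lie over a complete bipartite pair, so $e(V_i,V_j)<|V_i||V_j|$ in $G$ and the required non‑adjacent pair exists. (If one instead reads the edges of $R$ off the densities in $G$ rather than in $G'$, this is immediate, since $v_iv_j\notin E(R)$ then says $d_G(V_i,V_j)\le\d<1$.) Everything else is bookkeeping with $\epsilon$ and $\d$.
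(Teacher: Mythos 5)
The paper states this result as a black box imported from \cite{KOT09} and gives no proof of its own, so there is nothing internal to compare against; judged on its own, your argument is correct and is essentially the standard proof of this lemma. The pointwise bound $L\deg_R(v_i) > \deg_G(x) - (\d+2\epsilon)n$ via properties (4)--(6), the summation over a non-adjacent pair, and the final step $n/L \ge |R|$ are all sound, and you correctly isolate the one genuine subtlety --- that a non-edge of $R$ must sit over a non-complete bipartite pair of $G$ --- and resolve it the standard way, by noting that the degree form's construction of $G'$ only discards edges of irregular or low-density pairs, neither of which a complete bipartite pair can be.
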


We also use the following theorem of Ore.

\begin{theorem}[Ore \cite{O60}]\label{Thm:Ore}
If $G$ is $2$-connected, then $G$ contains a cycle of length at least $\sigma_{2}(G)$.
\end{theorem}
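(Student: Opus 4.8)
The plan is to argue by contradiction from a longest cycle, using $2$-connectivity through Menger's theorem. Since the statement as written is only meaningful (and is only applied) when $\sigma_2(G)\le n$, I will prove the equivalent form used here: a $2$-connected graph $G$ on $n$ vertices contains a cycle of length at least $s:=\min\{n,\sigma_2(G)\}$. I take a longest cycle $C$, assume $|C|<s$, and manufacture a longer cycle.

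First I would dispose of a clean sub-case via a longest \emph{path} $P=x_0x_1\cdots x_p$. By maximality of $P$ every neighbour of $x_0$ and of $x_p$ lies on $P$, so either $x_0x_p\in E(G)$, in which case $P$ closes to a cycle on $V(P)$, or, writing $A=\{i:x_i\in N(x_0)\}\subseteq\{1,\dots,p-1\}$ and $B=\{i:x_{i-1}\in N(x_p)\}\subseteq\{2,\dots,p\}$, we have $|A|+|B|=\deg(x_0)+\deg(x_p)$ with $A\cup B$ inside the $p$-element set $\{1,\dots,p\}$; hence $\deg(x_0)+\deg(x_p)\ge p+1$ forces $A\cap B\ne\emptyset$, and any $i\in A\cap B$ yields the cycle $x_0x_ix_{i+1}\cdots x_px_{i-1}x_{i-2}\cdots x_1x_0$ on $V(P)$. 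Any cycle $D$ with $V(D)\supseteq V(P)$ must span $G$ (otherwise, since $G$ is connected, a vertex off $D$ with a neighbour on $D$ would extend a spanning path of $D$ to a path longer than $P$), so in this branch $G$ is Hamiltonian and we are done. Thus I may assume $x_0x_p\notin E(G)$ and $\deg(x_0)+\deg(x_p)\le p$, so $s\le\sigma_2(G)\le p<|V(P)|$: the graph has a long path whose ends have low degree, and it is precisely this remaining range of $s$ for which $2$-connectivity is genuinely needed.

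For that, let $C$ be a longest cycle, $m:=|C|<s$; since $m<n$ there is a component $H$ of $G-V(C)$, and I set $S:=N_G(H)\cap V(C)$. As $G$ is $2$-connected it has no cutvertex, so $|S|\ge 2$ (equivalently, Menger's fan lemma gives, for each $x\in V(H)$, two internally disjoint $x$--$V(C)$ paths ending at distinct vertices of $C$). Maximality of $C$ then gives the key restrictions: any $u,v\in S$ are joined by an ``$H$-ear'' (a path of length $\ge 2$ with interior in $H$), which cannot be longer than the shorter of the two $u$--$v$ arcs of $C$, for otherwise substituting it lengthens $C$; in particular no two vertices of $S$ are consecutive on $C$, $m\ge 2|S|$, and for $x\in V(H)$ and $v\in S$ the $C$-successor $v^{+}$ of $v$ lies outside $S$, hence is non-adjacent to $x$, so that $\deg(x)+\deg(v^{+})\ge\sigma_2(G)\ge s>m$.

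The hard part will be to contradict this last inequality, and it is the real heart of the proof. I would run a P\'osa-type rotation of $C$ through $H$: traversing $C$, detouring at a suitable attachment point through $H$ to $x$ and back, and ``cutting a corner'' near $v^{+}$, and then show that a neighbour of $v^{+}$ on $C$ together with a neighbour of $x$ placed in compatible cyclic position would already produce a cycle longer than $C$; forbidding this for all choices caps $\deg(x)+\deg(v^{+})$ at essentially $m$, giving the desired contradiction. The delicate points are the right choice of $x$, of $v$, and of the orientation of $C$, the bookkeeping over the attachment points of $H$ in cyclic order (and over the first and last neighbours of $x$ and of $v^{+}$ on each inter-attachment arc), and the degenerate cases in which such arcs are as short as possible; by contrast the reduction to the $\min\{n,\sigma_2(G)\}$ form, the appeal to Menger, and the maximality consequences above are all routine.
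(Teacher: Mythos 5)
The paper does not prove this statement; it is imported as a classical result (attributed to Ore), so the only question is whether your argument stands on its own. It does not: there is a genuine gap exactly where you say the ``real heart of the proof'' lies. Your longest-path reduction and the setup around a longest cycle $C$ (the component $H$ of $G-V(C)$, the attachment set $S$ with $|S|\ge 2$ from $2$-connectivity, the facts that no two vertices of $S$ are consecutive on $C$ and that $x\in V(H)$ is non-adjacent to the successor $v^{+}$, hence $\deg(x)+\deg(v^{+})\ge s>m$) are all correct and standard. But the theorem \emph{is} the converse bound $\deg(x)+\deg(v^{+})\le m$ for a suitable choice of $x$ and $v$, and for that you offer only a promissory note (``I would run a P\'osa-type rotation \dots and then show that \dots''). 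Nothing in what you have written forces that bound: $N(x)$ is confined to $V(H)\cup S$, but $v^{+}$ may have neighbours spread over all of $C$ and even in other components of $G-V(C)$, and ruling out each such neighbour as a source of a longer cycle requires the careful injection/pairing over the arcs between consecutive attachment vertices (and the treatment of the degenerate short arcs) that you explicitly defer. That bookkeeping is precisely the content of the Erd\H{o}s--Gallai/Bondy-style proofs of this circumference bound, and it occupies the bulk of those proofs; without it you have a framework, not a proof.

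A secondary point: your first branch (the longest-path crossing argument) only disposes of the case $\deg(x_0)+\deg(x_p)\ge p+1$, concluding Hamiltonicity; in the remaining case you deduce $s\le p<|V(P)|$, i.e.\ the existence of a long path, which is weaker than the existence of a long cycle and is not actually used in your second branch. So the two halves do not combine into anything; everything still rests on the unproved rotation step. To repair the write-up, either carry out the arc-by-arc counting argument in full (choose $x\in V(H)$ adjacent to some $u\in S$, enumerate $S=\{u_1,\dots,u_k\}$ in cyclic order, and show $\deg(x)+\deg(u_1^{+})\le m$ by exhibiting, for each neighbour of $u_1^{+}$ on a given arc, a forbidden longer cycle), or simply cite the result as the paper does.
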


We use the following result of Williamson.  Recall that a graph is called \emph{panconnected} if, between every pair of vertices, there is a path of every possible length from $2$ up to $n - 1$.

\begin{theorem}\label{Thm:Williamson}
Let $G$ be a graph of order $n$.  If $\delta(G) \geq \frac{n + 2}{2}$, then $G$ is panconnected.
\end{theorem}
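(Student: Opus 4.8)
It suffices to prove that for every pair of distinct vertices $u,v \in V(G)$ and every integer $\ell$ with $2 \le \ell \le n-1$, the graph $G$ contains a $u$--$v$ path with exactly $\ell$ edges; this will be done by induction on $\ell$. Two consequences of the hypothesis $\delta(G) \ge \tfrac{n+2}{2}$ are used throughout. First, $d(x)+d(y) \ge n+2$ for \emph{every} pair of vertices $x,y$, so $|N(x)\cap N(y)| \ge d(x)+d(y)-n \ge 2$; in particular $u$ and $v$ have a common neighbour, which gives the base case $\ell=2$, and the same estimate lets us join prescribed vertices by short paths while avoiding a bounded number of forbidden vertices. Second, if $S \subseteq V(G)$ is independent and $x \in S$, then $S \cap N(x) = \emptyset$, so $|S| \le n - \delta(G) \le \tfrac{n-2}{2}$; thus $\alpha(G) \le \tfrac{n-2}{2}$.

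For the inductive step fix $\ell$ with $2 \le \ell \le n-2$ and a $u$--$v$ path $P = p_0 p_1 \cdots p_\ell$ with $p_0 = u$ and $p_\ell = v$; since $|V(P)| = \ell+1 \le n-1$, the set $W := V(G)\setminus V(P)$ is nonempty. We must produce a $u$--$v$ path with $\ell+1$ edges, always keeping $p_0$ and $p_\ell$ as its endpoints. If some $w \in W$ is adjacent to two consecutive vertices $p_i,p_{i+1}$ of $P$ (allowing $i=0$ or $i=\ell-1$), then inserting $w$ between $p_i$ and $p_{i+1}$ produces such a path. Hence we may assume that for every $w \in W$ the neighbourhood $N_P(w)$ of $w$ on $P$ contains no two consecutive vertices of $P$.

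From here the argument branches on how the degree of a vertex of $W$ is split between $P$ and $W$. Suppose first that some $w \in W$ satisfies $d_P(w) \ge \tfrac{n+2}{2}$. Form the shifted set $A^{+} = \{\, p_{i+1} : p_i \in N_P(w),\ i \le \ell-1 \,\}$; the map $p_i \mapsto p_{i+1}$ is injective, so $|A^{+}| \ge d_P(w)-1 \ge \tfrac n2 > \alpha(G)$ and $A^{+}$ is not independent. Choose an edge $p_{i+1}p_{j+1}$ inside $A^{+}$ with $i<j$ (so $p_i,p_j \in N_P(w)$ and, since $N_P(w)$ has no two consecutive vertices, $j \ge i+2$). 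Then
\[
p_0 \cdots p_i \; w \; p_j\, p_{j-1} \cdots p_{i+1} \; p_{j+1}\, p_{j+2} \cdots p_\ell
\]
is a $u$--$v$ path on $V(P) \cup \{w\}$, hence of length $\ell+1$, as required. In the complementary case every $w \in W$ has $d_P(w) < \tfrac{n+2}{2}$, so each $w\in W$ has many neighbours inside $W$ and $G[W]$ is dense. There one deletes a block of consecutive \emph{internal} vertices of $P$ and re-inserts in its place a path through $W$ of the unique length that raises the total length by one, joining a $W$-neighbour of one surviving endpoint of the deleted block to a $W$-neighbour of the other; the density of $G[W]$, together with the common-neighbour estimate applied inside $W$ (or a panconnectedness statement for $G[W]$ obtained by an outer induction on $n$), supplies such a path.

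The step I expect to be the main obstacle is verifying that this branching is genuinely exhaustive over the whole range $2 \le \ell \le n-2$. Between the ``crossover'' regime, where some vertex of $W$ has at least $\tfrac{n+2}{2}$ neighbours on $P$, and the ``reroute through $W$'' regime, where $G[W]$ is dense, lies an intermediate range of $\ell$ in which neither device applies in a single step; there one must iterate the rotation/crossover operation and also exploit rotations at both ends of $P$ (replacing $A^{+}$ by the union of the forward and backward shifts of $N_P(w)$), and the parity bookkeeping in the degree counts must be handled with care, since several of the relevant inequalities hold only up to an additive $1$. By contrast the last extension step, $\ell=n-2$, is easy: then $|W|=1$, so the unique vertex of $W$ has degree at least $\tfrac{n+2}{2}$ entirely on $P$ and therefore automatically has two consecutive neighbours on $P$ or falls into the crossover case. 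Running the induction from $\ell=2$ up to $\ell=n-1$ then yields $u$--$v$ paths of all lengths, that is, $G$ is panconnected.
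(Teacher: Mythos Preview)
The paper does not prove this theorem. It is quoted in the preliminaries as ``the following result of Williamson'' and then used as a black box (in Claim~\ref{Claim:DegSmallPanConn} and in Lemma~\ref{Lemma:Panconnected Sets in B}); no proof or sketch is given. So there is nothing in the paper to compare your argument against.

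On your sketch itself: the parts you actually carry out are sound. The base case $\ell=2$ via $|N(u)\cap N(v)|\ge 2$, the insertion of $w$ when it sees two consecutive vertices of $P$, and the crossover when some $w\in W$ has $d_P(w)\ge\tfrac{n+2}{2}$ are all correct (the shifted set $A^{+}$ has size exceeding $\alpha(G)\le\tfrac{n-2}{2}$, and the rerouted path you write down really does have length $\ell+1$). The final case $\ell=n-2$ indeed collapses to the easy insertion.

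The genuine gap is exactly where you flag it. In the branch ``every $w\in W$ has $d_P(w)<\tfrac{n+2}{2}$'' the only consequence is $d_W(w)\ge 1$, which is nowhere near ``$G[W]$ is dense'' for intermediate $\ell$; and your ``delete an internal block of $P$ and replace it by a $W$-path of the right length'' step is not justified --- you have not shown that the two boundary vertices of the deleted block have neighbours in $W$, nor that such neighbours can be joined inside $W$ by a path of the required length. Invoking ``panconnectedness of $G[W]$ by an outer induction on $n$'' also fails, since $\delta(G[W])$ need not be at least $\tfrac{|W|+2}{2}$. The standard way to close this range is not via density of $G[W]$ but via a sharper double count that simultaneously uses the forward and backward shifts of $N_P(w)$ together with the neighbourhoods of the path endpoints; as written, your outline is in the right spirit but is not yet a proof.
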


\section{Proof Outline}

Given an integer $k \geq 3$ and desired path orders $n_{1}, n_{2}, \dots, n_{k}$, we choose constants $\epsilon$ and $\d$ as follows:
\begin{equation*}
0 < \epsilon \ll \d \ll \frac{1}{k},
\end{equation*}
where $a \ll b$ is used to indicate that $a$ is chosen to be sufficiently small relative to $b$. Let $n$ be sufficiently large to apply Lemma~\ref{Lemma:Regularity} with constant $\epsilon$ to get large clusters and let $R$ be the corresponding reduced graph.  Note that, when applying Lemma~\ref{Lemma:Regularity}, there are at least $\frac{1-\epsilon}{\epsilon}$ clusters so $|R| \geq \frac{1-\epsilon}{\epsilon}$.

We use a sequence of lemmas to eliminate extremal cases of the proof.  Without loss of generality, we assume $n_{1} \leq n_{2} \leq \dots \leq n_{k}$.  Our first lemma establishes the case when $\delta(G)$ is small.

\begin{lemma}\label{Lemma:Degree}
Conjecture~\ref{Conj:E-O} holds when $\delta(G) \leq
\frac{n_{k}}{8}$.
\end{lemma}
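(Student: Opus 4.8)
The plan is to exploit the hypothesis $\delta(G) \le n_k/8$ directly, without invoking the Regularity Lemma, since the degree condition already pins down a very rigid structure. Let $v$ be a vertex of minimum degree, so $\deg(v) \le n_k/8$. The Ore-type hypothesis $\sigma_2(G) \ge n + k - 1$ then forces every vertex $u$ non-adjacent to $v$ to satisfy $\deg(u) \ge n + k - 1 - n_k/8$. In particular, writing $A = N(v)$ and $B = V(G) \setminus (N(v) \cup \{v\})$, every vertex of $B \cup \{v\}$ has degree at least roughly $n - n_k/8$, and since $n_k \le n$ this is at least about $(7/8)n$. Hence the set $B \cup \{v\}$ induces an extremely dense graph: each of its vertices misses at most $n_k/8 + k$ others, and in fact almost all of its neighbours lie inside $A$, which has size at most $n_k/8$. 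A counting argument then shows $|B|$ must itself be small --- on the order of $n_k/8$ --- because a vertex of $B$ has almost all of its $\ge (7/8)n$ neighbours in $A \cup B$, forcing $|A| + |B|$ to be large, yet $|A| \le n_k/8$; balancing these gives $|B| \ge n - O(n_k)$ is impossible unless... more carefully: $\deg(b) \le |A| + |B| - 1$ for $b \in B$, so $|A| + |B| \ge n + k - 1 - n_k/8$, hence $|B| \ge n - n_k/4 - O(k)$. So in fact $B$ is the \emph{large} side: $|B| \ge n - n_k/4$, $A$ is small, and $B$ spans a near-complete graph.

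Next I would show $G[B]$ is so dense that Theorem~\ref{Thm:Williamson} (panconnectedness) applies to it, or at least to $G[B \cup \{v\}]$: each vertex of $B$ has at most $n_k/8 + O(k)$ non-neighbours total, so its degree \emph{within} $B$ is at least $|B| - n_k/8 - O(k) \ge |B| - |B|/4 > (|B|+2)/2$ once $n$ is large (using $n_k \le n$ and $|B| \ge n - n_k/4$, so $n_k/8$ is a small fraction of $|B|$ only if $n_k$ is not too close to $n$; the case $n_k$ close to $n$, meaning $k$ is essentially $1$ extra path, needs the separate handling below). So assume for now $G[B]$ is panconnected. Now build the paths greedily. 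The vertices $x_1, \dots, x_k$ split into those in $A \cup \{v\}$ and those in $B$. There are at most $|A| + 1 \le n_k/8 + 1 < k$ --- wait, that bound on $|A|+1$ need not be below $k$; instead use that at most $k$ of the $x_i$ can be bad, and each lies within distance short of $B$. For each $x_i \notin B$, since $x_i \in A \cup \{v\}$ has a neighbour in $B$ (indeed $v$ is adjacent to all of $A$ and every vertex of $B$ has a neighbour in $A$, while $A \cup \{v\}$ is small so any $x_i$ there reaches $B$ in one or two steps), route a short initial segment of $P_i$ of the correct parity/length from $x_i$ into $B$, using the few vertices of $A \cup \{v\}$; there is enough room since $|A \cup \{v\}| \le n_k/8 + 1$ is tiny and we only need $k-1$ short connectors plus possibly absorbing all of $A \cup \{v\} \setminus \{\text{the }x_i\}$ into one designated long path $P_k$. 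Having arranged all $k$ current path-ends to lie in $B$ with the remaining required lengths $n_i'$ summing to $|B \cap (\text{unused})|$, finish inside the panconnected graph $G[B]$: carve out the paths one at a time, each of prescribed length, which is possible in a panconnected (even just Hamiltonian-connected plus pancyclic) dense graph by a standard sequential extraction, being careful that after removing the first $k-1$ paths the leftover still induces a graph dense enough to contain the last path as a Hamilton path --- again guaranteed by Theorem~\ref{Thm:Williamson} applied to the shrinking remainder.

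The main obstacle I anticipate is the boundary regime where $n_k$ is very close to $n$, i.e.\ one path must be almost all of $G$ and the others are tiny; there $\delta(G) \le n_k/8$ is barely a restriction (it permits $\delta$ up to nearly $n/8$), and the clean "$B$ is huge and near-complete'' picture degrades because $n_k/8$ is no longer negligible against $|B|$. Handling this will require a finer split: show that even then the low-degree vertices form a small set $S$ with $|S| \le n_k/8$, that $G - S$ has minimum degree $> (n - |S| + 2)/2$ hence is panconnected, that the small paths $P_1, \dots, P_{k-1}$ (whose total length is $n - n_k \le$ something controlled, since $\sum_{i<k} n_i = n - n_k$ and we are in the regime $n - n_k$ small) can be built to consume exactly $S$ plus a few extra vertices using the abundant edges from $S$ into $G - S$, and that the giant path $P_k$ is then a Hamilton path of the dense remainder. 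A secondary nuisance is parity/length feasibility of the short connectors --- making each $P_i$ come out to \emph{exactly} $n_i$ --- which is handled by the flexibility of panconnectedness (every intermediate length is available) once the connector endpoints are safely inside the dense part.
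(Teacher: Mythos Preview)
Your broad strategy matches the paper's: take a minimum-degree vertex $a$, set $B = V(G)\setminus(\{a\}\cup N(a))$, observe via $\sigma_2$ that every $b\in B$ has at most about $n_k/8$ non-neighbours, apply Theorem~\ref{Thm:Williamson} to get panconnectedness on large subsets of $B$, route the $x_i$ from the small side into $B$, and then carve out the paths one by one inside $B$. Your worry about the regime $n_k\approx n$ is a red herring: since $|B|=n-1-\delta(G)\ge 7n_k/8-1$ and the non-neighbour count inside $B$ is at most $n_k/8$, Williamson applies to every subset of $B$ of order at least $3n_k/8$ uniformly in $n_k$, and after removing $P_1,\dots,P_{k-1}$ at least $n_k$ vertices remain. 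No separate boundary analysis is needed.

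The genuine gap is the step ``absorbing all of $A\cup\{v\}$ into one designated long path $P_k$.'' You justify this only by saying the set is tiny, but size is not the obstruction---structure is. A vertex $w\in N(v)$ may have $\deg(w)=\delta(G)$ with all neighbours inside $N(v)\cup\{v\}$; such a $w$ cannot be picked up by a detour from $B$, and $N(v)$ need not span anything path-like (it could, a priori, be a sparse mess). The paper resolves this by partitioning $\{a\}\cup N(a)$ into
\[
A=\Bigl\{w:\ |N(w)\cap B|<\tfrac{1}{8}(n+k-\delta(G)-1)\Bigr\},\qquad C=(\{a\}\cup N(a))\setminus A.
\]
A two-line computation from $\sigma_2(G)\ge n+k-1$ shows that $A$ is a \emph{clique}; hence a Hamilton path through $A$ is free. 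Every vertex of $C$ has so many neighbours in $B$ that one can string all of $C\setminus X$ into a single short path $P_C$ (order $<n_k/2$) by repeatedly hopping $C\to B\to B\to C$ using panconnectedness in $B$. Menger's theorem (via $\kappa(G)\ge k+1$) then supplies disjoint short paths from each $x_i\in A$ and one extra proxy vertex of $A$ out to $B$, and the clique lets you sweep up the remaining $A$-vertices onto that proxy's path before it exits. With this $A$/$C$ split your outline becomes a complete proof; without it, the absorption step does not go through.
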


Lemma~\ref{Lemma:Degree} is proven in Section~\ref{Section:Degree}.  By Lemma~\ref{Lemma:Degree}, we may assume $\delta(G) \geq \frac{n_{k}}{8} \geq \frac{n}{8k}$.  Our next lemma establishes the case when $\kappa(R) \leq \kr|R|$, where $R$ is the reduced graph of $G$ after applying Lemma~\ref{Lemma:Regularity}.

\begin{lemma}\label{Lemma:Connectivity}
Given a positive integer $k$, let $\epsilon = \epsilon_k, \d = \d_k > 0$, and let $G$ be a graph of order $n \geq n(\epsilon,\d,k)$ with $\sigma_2(G) \geq n+k-1$ and $\delta(G) \geq \frac{n_k}{8}$. If $\kappa(R) \leq 1$, then the conclusion of Conjecture~\ref{Conj:E-O} holds. %for any $k$ distinct vertices $x_1,\dots,x_k$ in $G$, there exist disjoint paths $P_1,\dots,P_k$ such that $|V(P_i)| = n_i$ and $x_i$ is an endpoint of $P_i$ for every $i$ satisfying $1 \leq i \leq k$.
\end{lemma}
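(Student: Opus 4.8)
The plan is to exploit the rigid structure that the hypothesis $\sigma_2(G)\ge n+k-1$ forces on a reduced graph with $\kappa(R)\le 1$, realise the $k$ paths inside two large, almost-complete ``blobs'' using the Blow-Up Lemma, and retrieve the prescribed endpoints, the exceptional set $V_0$, and a few discarded vertices by short connecting paths and an absorbing argument. First I would apply Lemma~\ref{Lemma:Regularity} to $G$ with our constants $\epsilon\ll\d\ll 1/k$ and pass to the reduced graph $R$; by Theorem~\ref{Thm:RDegreeSum} with $c=1$ we get $\sigma_2(R)\ge(1-2\d-4\epsilon)\r$, and moreover $\delta(R)>\tfrac{\r}{32k}$, since if a cluster $v_i$ had at most $\tfrac{\r}{32k}$ neighbours in $R$ then --- the $G'$-neighbours of any $v\in V_i$ lying only in those partner clusters and in $V_0$ --- property~(4) of Lemma~\ref{Lemma:Regularity} would force $\deg_G(v)<\tfrac{n}{8k}\le\delta(G)$. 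Combining $\sigma_2(R)$, $\delta(R)>\tfrac{\r}{32k}$ and $\kappa(R)\le 1$ in a short case analysis on a minimum cutset $T$ of $R$ (so $|T|\le 1$) --- where $\sigma_2(R)$ shows that $R-T$ has at most two components and the two degree bounds rule out small ones --- I would obtain the structure: $R-T$ has exactly two components $A$ and $B$; $R[A]$ and $R[B]$ have minimum degree at least $(1-o(1))|A|$ and $(1-o(1))|B|$ respectively; and $|A|,|B|=\Omega(\r/k)$. If $T=\emptyset$ there are no $A$--$B$ edges, while if $T=\{x\}$ the cluster $x$ may be $\epsilon$-regular of density $>\d$ to clusters on both sides.

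Next, write $W_A=\bigcup_{v_i\in A}V_i$ and $W_B=\bigcup_{v_i\in B}V_i$. Applying Lemma~\ref{Lemma:Super-Regular} to the $\epsilon$-regular pairs inside $A$, inside $B$, and (when $T=\{x\}$) between $x$ and each side, I would move at most $O(\epsilon n)$ vertices into a small reservoir and obtain subsets $W_A'\subseteq W_A$ and $W_B'\subseteq W_B$ so that $G[W_A']$ and $G[W_B']$ are super-regular blow-ups of $R[A]$ and $R[B]$. Since $\delta(R[A])\ge(1-o(1))|A|$ exceeds half of $|A|$, the balanced complete blow-up ${\bf R[A]}$ has minimum degree exceeding half its order and is therefore panconnected by Theorem~\ref{Thm:Williamson}; in particular ${\bf R[A]}$ contains a Hamilton path with either endpoint in any prescribed cluster, and since that path has maximum degree $2$, the Blow-Up Lemma (Lemma~\ref{Lemma:Blow-Up}) transfers it to $G[W_A']$. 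The same applies to $G[W_B']$. Hence each of $G[W_A']$, $G[W_B']$ is a flexible host: it accommodates a family of vertex-disjoint paths whose total number of vertices is any value up to its order (with a slack of $O(\epsilon n)$) and whose ends may be placed in any prescribed clusters.

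To assemble the $k$ paths I would proceed in three steps. First, each $x_i$, and each leftover vertex of $V_0$ or of the reservoir, has many neighbours on at least one side (by property~(4) and the density of the pairs incident to its cluster), so I attach it, together with a short path absorbing any vertices sharing its cluster (via Lemma~\ref{Lemma:ShortPathInSuper}), to the bulk of that side; afterwards the bulk paths need only have their free ends in designated clusters. Second, I choose a partition $I_A\sqcup I_B=[k]$ of the indices with $\bigl|\sum_{i\in I_A}n_i-|W_A'|\bigr|$ smaller than a cluster, and single out one ``bridge'' index $j_0$ whose path may cross: $P_{j_0}$ runs through the bulk of $W_A'$, crosses to $W_B'$ through the cluster $x$ along a path of length at most $3$ (Lemma~\ref{Lemma:ShortPathInSuper}), absorbing $V_x$ en route, and finishes in $W_B'$; when $T=\emptyset$ the crossing instead uses a short $W_A$--$W_B$ path through $V_0$, which exists because $\sigma_2(G)\ge n+k-1$ makes $G$ connected. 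Third, inside each side I use the panconnectedness of the blow-ups through the Blow-Up Lemma, together with a standard absorbing argument for the remaining reservoir vertices, to realise the remaining $P_i$ with $|P_i|=n_i$ exactly and $P_i$ starting at $x_i$, stitching the segments by Lemma~\ref{Lemma:ShortPathInSuper}.

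The main obstacle is the interaction of the last two steps: the orders $n_i$ must be met \emph{exactly} --- this is precisely what separates Conjecture~\ref{Conj:E-O} from the approximate theorem of Magnant and Martin --- yet the $W_A$--$W_B$ cut is thin enough that in essence only one path can cross, so the whole discrepancy between $\sum_{i\in I_A}n_i$ and $|W_A'|$, along with all the slack accumulated in the super-regularisation, the attaching segments and the crossing, must be absorbed by the single bridge path $P_{j_0}$. Carrying out the absorption of $V_0$ and of the reservoir so that it stays compatible with these exact length requirements --- uniformly over all admissible $(n_1,\dots,n_k)$ and over every placement of the vertices $x_i$, including $x_i\in V_0$, $x_i$ in the cluster $x$, or several $x_i$ lying in a common cluster --- is where the bulk of the technical work will lie.
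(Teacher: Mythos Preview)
Your structural analysis of $R$ is correct, but you miss the key simplification that makes this lemma much easier than the non-extremal case: once $\kappa(R)\le 1$, you should abandon the regularity machinery altogether and work directly in $G$. The paper observes that a minimum cutset $C$ of $G$ is contained in $V_0$ together with the (at most one) cluster corresponding to the cut vertex of $R$, so $|C|\le(\epsilon\xi|R|+\epsilon)n$. Writing $G=A\cup C\cup B$ and taking any $u\in A$, $v\in B$, the pair $u,v$ is nonadjacent, so $\sigma_2(G)\ge n+k-1$ gives $d(u)\ge n+k-|B|-|C|=|A|+k$ and hence $\delta(G[A])\ge |A|-|C|+k$; symmetrically for $B$. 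Thus $G[A]$ and $G[B]$ are \emph{themselves} almost complete, and by Theorem~\ref{Thm:Williamson} every subset of $A$ or $B$ of order at least $2|C|$ is panconnected \emph{in $G$}. One then builds $P_1,\dots,P_k$ greedily by induction on $i$, using panconnectedness inside $A$ and $B$ to hit the exact lengths $n_i$ and crossing through $C$ via reserved proxy neighbours $a_c\in A$, $b_c\in B$ for each $c\in C$; the last path sweeps up all remaining vertices including $C$.

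Your route through the Blow-Up Lemma and an absorbing argument is not wrong in principle, but it reimports exactly the difficulty you flag at the end --- meeting the $n_i$ exactly while absorbing $V_0$ and the reservoir across a thin cut --- which the paper's direct panconnectedness argument sidesteps completely. In your framework the ``slack of $O(\epsilon n)$'' has to be eliminated by an absorption compatible with a single bridge path, and you have not shown how to do this; in the paper's framework there is no slack, because Williamson applied inside $G[A]$ and $G[B]$ gives paths of every length on the nose. The moral is that once an extremal hypothesis forces near-complete pieces in $G$, one should exploit that directly rather than rebuild them through super-regular pairs.
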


Lemma~\ref{Lemma:Connectivity} is proven in Section~\ref{Section:Connectivity}. Our final lemma establishes the case where $G$ contains a large independent set.

\begin{lemma}\label{Lemma:IndepSet}
Given a positive integer $k$, let $\epsilon = \epsilon_k > 0$ be small, and let $G$ be a graph of order $n \geq n(\epsilon)$. If $\sigma_2(G) \geq n + k - 1$ and $\alpha(G) \geq \left(\frac{1}{2} - \epsilon\right)n$, then $G$ satisfies Conjecture~\ref{Conj:E-O}.
\end{lemma}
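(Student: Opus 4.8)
The plan is to exploit the fact that a graph with $\alpha(G) \geq (\tfrac12 - \epsilon)n$ and $\sigma_2(G) \geq n+k-1$ has a very rigid structure: it is essentially a complete bipartite graph with one side slightly smaller than the other, plus a few extra edges. First I would fix a maximum independent set $I$ with $|I| = \alpha(G) \geq (\tfrac12 - \epsilon)n$ and let $J = V(G) \setminus I$, so $|J| \leq (\tfrac12 + \epsilon)n$. Since $I$ is independent, every vertex of $I$ has all its neighbors in $J$, so for $u \in I$ we get $d(u) \leq |J|$; combined with $\sigma_2(G) \geq n+k-1$ this forces, for any two nonadjacent vertices $u,v$, that $d(u) + d(v) \geq n+k-1$, hence every vertex of $I$ has degree at least $n + k - 1 - |J| \geq (\tfrac12 - \epsilon)n + k - 1$, i.e. $d(u) \geq |I| - O(\epsilon n)$ for $u \in I$. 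So almost every vertex of $I$ is adjacent to almost every vertex of $J$. I would then clean this up: let $J'$ be the set of vertices of $J$ adjacent to all but at most $\sqrt{\epsilon}\,n$ vertices of $I$, and $I'$ the analogous high-degree subset of $I$; a counting argument shows $|I \setminus I'|$ and $|J \setminus J'|$ are both $O(\sqrt{\epsilon}\,n)$, and the bipartite pair $(I', J')$ is ``nearly complete'' — every vertex on one side misses at most $O(\sqrt\epsilon\, n)$ on the other.

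Next I would handle the small exceptional set $W = (I \setminus I') \cup (J \setminus J')$ together with the prescribed endpoints $x_1,\dots,x_k$. Here is the key point: because $\delta(G) \geq n/(8k)$ is large (we may assume this from Lemma~\ref{Lemma:Degree}, but even directly each vertex has large degree into $I' \cup J'$), I can greedily route, for each $i$, a short path starting at $x_i$ — of length a small constant, at most $2k$ say — that picks up a controlled handful of vertices of $W$ and ends at a vertex of $J'$ (ending in $J'$ is always possible since every vertex has many neighbors in $I'$, and every vertex of $I'$ has many neighbors in $J'$). Doing this for $i=1,\dots,k$ in turn, keeping the short paths vertex-disjoint and making sure all of $W$ is absorbed, uses up only $O(\sqrt\epsilon\, n + k)$ vertices total. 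After this step the remaining vertices form two sets $\tilde I \subseteq I'$ and $\tilde J \subseteq J'$ with $(\tilde I, \tilde J)$ still nearly complete bipartite, each path stub $P_i^0$ has its free end in $\tilde J$, and I must now extend the $k$ stubs into full paths of the right sizes $n_i$ using exactly the vertices of $\tilde I \cup \tilde J$.

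The core combinatorial task is then: given a nearly complete bipartite graph on $(\tilde I, \tilde J)$ with $||\tilde I| - |\tilde J||$ small and $k$ specified endpoints in $\tilde J$, partition into paths $P_1,\dots,P_k$ of prescribed orders $n_i' = n_i - |P_i^0|$. I would first check the parity/feasibility arithmetic: a path using $a$ vertices of $\tilde I$ and $b$ of $\tilde J$ has $|a - b| \leq 1$, so $\sum n_i' = |\tilde I| + |\tilde J|$ together with the near-balance of the two sides means the $n_i'$ can be realized; any small discrepancy between $|\tilde I|$ and $|\tilde J|$ (at most $O(\sqrt\epsilon\,n)$, coming from $\alpha(G)$ possibly exceeding $n/2$ and from the cleaning step) is absorbed by routing that many more vertices of the larger side into the stubs in the previous paragraph, so I may assume $||\tilde I| - |\tilde J||$ is at most $k$ or so. Then I build the paths one at a time, alternating sides $\tilde J, \tilde I, \tilde J, \tilde I, \dots$, which is possible because at every step the current endpoint (in $\tilde I$ or $\tilde J$) has all but $O(\sqrt\epsilon\,n)$ of the other side available — far more than the number of forbidden vertices — so a suitable next vertex always exists; when a path needs its last vertex I simply make sure to leave the side-counts consistent for the remaining paths, which is a bookkeeping argument using the parity constraints. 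The main obstacle I anticipate is precisely this last bookkeeping: ensuring that when $|\tilde I|$ and $|\tilde J|$ are not exactly equal, the "extra" vertices of the larger side get distributed among the $k$ paths in a way compatible with all the prescribed $n_i'$ — i.e. that each $n_i'$ is large enough (at least $1$, and for paths that must straddle both sides at least $2$) to accommodate whatever side-imbalance is assigned to it. This is handled by first giving each $P_i'$ the unique alternating pattern consistent with its parity and with starting in $\tilde J$, then noting $\sum_i \lceil n_i'/2 \rceil \geq |\tilde J|$ and $\sum_i \lfloor n_i'/2 \rfloor \leq |\tilde I| + k$, which (after the at most $k$-sized correction above) leaves exactly the right number of vertices on each side; a short exchange argument closes any remaining off-by-small-constant gaps by shifting a vertex from one path to an adjacent one through the near-complete bipartite structure.
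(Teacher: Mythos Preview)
Your overall strategy---fix a maximum independent set, observe the near-complete-bipartite structure between it and its complement, absorb exceptional vertices, then build the paths by alternating sides---matches the paper's. However, two genuine gaps prevent the argument from going through as written.

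First, the imbalance $|J|-|I|$ can be as large as $2\epsilon n$, far exceeding $k$. A path that strictly alternates $I$--$J$ uses at most one more $J$-vertex than $I$-vertex, so absorbing this excess \emph{requires} many edges inside $J$, and you never establish that $G[J]$ contains enough such edges in a usable configuration. The paper's proof is largely devoted to exactly this point: it shows (via a separate claim) that $G[J]$ either contains $\Theta(t)$ independent edges or $\Theta(t)$ vertices of linear degree in $J$, where $t=|J|-|I|+k$, and then in each case constructs a single path $Q$ that sits partly in $J$ and soaks up precisely the right surplus, with the target surplus tracked by a carefully computed parity parameter $\tau$ depending on which $x_i$ lie in which part and which $n_i$ are odd. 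Your final ``short exchange argument closes any remaining off-by-small-constant gaps'' is not a substitute for this structural work: the gap is not a small constant but $\Theta(\epsilon n)$.

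Second, there is an internal inconsistency in your absorption step. You say each of the $k$ stubs has length ``at most $2k$'', hence at most $2k^2$ vertices in total, yet you also claim these stubs absorb all of $W$, which you have just estimated at $O(\sqrt{\epsilon}\,n)$ vertices. These two statements are incompatible. The paper instead absorbs its exceptional set into the single longest path $P_k$, not via short stubs. Moreover, vertices of $J$ with very few neighbours in $I$ (the paper's set $D'$) cannot be reached at all by a short $I$-alternating path; absorbing them necessarily uses $J$--$J$ edges, which again points back to the missing structural analysis of $G[J]$.
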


Lemma~\ref{Lemma:IndepSet} is proven in Section~\ref{Section:IndepSet}.

With all these lemmas in place, we use Ore's Theorem (Theorem~\ref{Thm:Ore}) to construct a long cycle in the reduced graph of $G$. Alternating edges of this cycle are then made into super-regular pairs of $G$.  This structure is then used to construct the desired paths.  The complete proof of our main result, assuming the above lemmas, is presented in the following section.

%%%%%%%%%%%%%%%%%%%%%%%%%%%%%%%%%%%%%%%%%%%%%%%%%%%%%%%%%%%%%
\section{Proof of Theorem~\ref{Thm:Main}}\label{Section:MainPf}
%%%%%%%%%%%%%%%%%%%%%%%%%%%%%%%%%%%%%%%%%%%%%%%%%%%%%%%%%%%%%

By Lemma~\ref{Lemma:Connectivity}, we may assume $R$ is $2$-connected.  By Theorem~\ref{Thm:RDegreeSum}, we know that $\sigma_{2}(R) \geq (1 - 2\d - 4\epsilon)|R|$.  Thus, we may apply Theorem~\ref{Thm:Ore} to obtain a cycle $\C$ of length at least $(1 - 2\d - 4\epsilon)|R|$ in $R$.  Define a ``garbage set'' to include $V_{0}$ and those clusters not used in $\C$.

Color the edges of $\C$ with red and blue such that no two red edges are adjacent and as few blue edges as possible are adjacent.  Note that if $\C$ is even, then the colors will alternate, and if $\C$ is odd, then there will be only one consecutive pair of blue edges while all others are alternating.  Apply Lemma~\ref{Lemma:Super-Regular} on the pairs of clusters in $G$ corresponding to the red edges of $R$ to obtain super-regular pairs where the two sets of each super-regular pair have the same order.  All vertices discarded in this process are added to the garbage set and define the clusters $C_{i}$ to be the original clusters without the removed vertices.  Note that we have added a total of at most $\epsilon n$ vertices to the garbage set.

If $C$ is odd, then let $c_{0}$ be the vertex in $R$ with two blue edges, let $C_{0}$ be the corresponding cluster in $G$, and let $C_{0}^{+}$ and $C_{0}^{-}$ be the neighboring clusters in $G$.  Since the pairs $(C_{0}^{-}, C_{0})$ and $(C_{0}, C_{0}^{+})$ are both large and $\epsilon$-regular, there exists a set of at least $k$ vertices $T_{0} \subseteq C_{0}$ with a matching to each of $C_{0}^{-}$ and $C_{0}^{+}$.  We will use these vertices as transportation and move all of $C_{0} \setminus T_{0}$ to the garbage set.

Let $G_{\C}$ denote the graph on the set of vertices remaining in clusters associated with $\C$ that have not been moved to the garbage set, and let $D$ denote the garbage set. Then $V(G) = \bigcup_{i=1}^|\C| C_i \cup C_0 \cup D$ (if $C_0$ exists), with $|D| \leq (2\d + 7\epsilon) n$.

By Lemma~\ref{Lemma:Degree}, we may assume $\delta(G) \geq \frac{n_{k}}{8}$.  In particular, the vertices in $D$ each have at least $\frac{n_{k}}{8} - (|D| - 1) \gg \epsilon n$ edges to $G_{\C}$.

A path is said to \emph{balance} the super-regular pairs in $G_{\C}$ if for every super-regular pair the path visits, it uses an equal number of vertices from each set in the pair.  Note that the removal of a balancing path preserves the fact that if a pair of clusters is super-regular, then the two clusters have the same order.  Let $(A, B)$ be a super-regular pair of clusters in $G_{\C}$. A balancing path starting in $A$ and ending in $B$ which contains a vertex $v \in D$ is called \emph{$v$-absorbing}.

\begin{claim}\label{Claim:Absorbing}
Avoiding any selected set of at most $\epsilon \r$ clusters and any set of at most $\frac{16(2\d + 7\epsilon)n}{\epsilon \r}$ vertices in each of the remaining clusters, there exists a $v$-absorbing path of order at most $17$.  Otherwise the desired path partition already exists.
\end{claim}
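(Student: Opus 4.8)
The plan is to fix a vertex $v \in D$ and build a $v$-absorbing path of bounded order by stitching together three short pieces: an entry segment from a cluster $A$ of some super-regular pair $(A,B)$ to $v$, a ``detour'' that picks up $v$, and an exit segment from $v$ back into $B$, all while staying inside $G_\C$ and respecting the balancing requirement. The key resource is that $v$ has more than $\epsilon n$ neighbors in $G_\C$ (by the degree bound $\delta(G) \ge n_k/8$ and $|D| \le (2\d+7\epsilon)n$), so even after we delete the forbidden set of at most $\epsilon \r$ clusters and at most $\frac{16(2\d+7\epsilon)n}{\epsilon \r}$ vertices per remaining cluster — a total deletion of at most $\epsilon n + \epsilon \r \cdot \frac{16(2\d+7\epsilon)n}{\epsilon \r} = \epsilon n + 16(2\d+7\epsilon)n \ll \frac{n_k}{8}$ vertices — the vertex $v$ still has a neighbor $u$ in some cluster $C_i$ belonging to a red edge of $\C$, say $C_i = A$ is one side of the super-regular pair $(A,B)$ (if $v$'s surviving neighbor lies on the $B$-side, or in the odd exceptional cluster $C_0$, we move one more step along $\C$ using Lemma~\ref{Lemma:ShortPathInSuper} or the transport set $T_0$ to reach a standard red pair, adding a bounded number of vertices).

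Next I would locate a second neighbor $w$ of $v$ in $G_\C$, distinct from $u$ and not in any forbidden cluster; again the degree count guarantees one exists. If $w$ lies in the same pair $(A,B)$ — say $w \in B$ — then $u\,v\,w$ is already a path of order $3$ from $A$ to $B$ through $v$, and it is balancing on that pair (one vertex from each side, ignoring $v$ which is charged to $D$). If $w$ lies in a different red pair $(A',B')$, I would use Lemma~\ref{Lemma:ShortPathInSuper} inside that pair to extend $w$ to a vertex of $B'$ by a path of length at most $3$, then use the cycle structure of $\C$ — successive red pairs are joined by blue edges, themselves $\epsilon$-regular — to walk back to the pair $(A,B)$ via at most two more regular pairs, each traversal costing a path of length at most $3$ by Lemma~\ref{Lemma:ShortPathInSuper} applied to the relevant $(\epsilon, \d-2\epsilon)$-super-regular restriction (obtained from Lemma~\ref{Lemma:Super-Regular}); closing up inside $(A,B)$ then yields a path from $A$ to $B$ through $v$ whose total order is bounded by a constant. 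A careful accounting along at most a constant number of pairs, each contributing at most $3$ vertices, plus the vertex $v$ and the endpoints, gives the stated bound of $17$ (the precise count: three pairs traversed at $\le 3$ each, two short connectors at $\le 3$, endpoints, and $v$), and the balancing can be enforced pair-by-pair because within any single $\epsilon$-regular pair we have the freedom to choose whether a connecting subpath uses one vertex from each side or to pad by an extra pair of vertices.

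The main obstacle is the ``Otherwise'' escape clause: we must argue that if no such short absorbing path exists — i.e.\ if the forbidden sets are so large (or so adversarially placed) that $v$'s neighborhood in the available part of $G_\C$ is exhausted — then the desired path partition of $G$ already exists. But this cannot happen under the stated size bounds: the total number of forbidden vertices is $\ll \frac{n_k}{8} \le \delta(G)$, so $v$ always retains an available neighbor, and the regularity/super-regularity of the pairs along $\C$ (from Lemma~\ref{Lemma:Regularity} and Lemma~\ref{Lemma:Super-Regular}) guarantees the short connecting subpaths exist after the bounded deletions, since deleting $o(|C_i|)$ vertices from a cluster preserves $\epsilon'$-regularity and super-regularity for a slightly worse $\epsilon'$. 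Hence the real content is a clean verification that the bounded deletions never disconnect $v$ from the super-regular backbone, together with bookkeeping to keep the path order at most $17$; the escape clause is then vacuous in our regime, and I would phrase it that way — it is included only to make the claim self-contained when invoked with parameters outside the intended range.
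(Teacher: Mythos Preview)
There are two genuine gaps. First, your connecting step is unsound: you say that if $v$'s second neighbor $w$ lands in a different super-regular pair $(A',B')$, you can ``walk back to the pair $(A,B)$ via at most two more regular pairs'' using the cycle structure of $\C$. But $\C$ has length roughly $r$, and the red pairs containing $u$ and $w$ can sit on opposite sides of this cycle; a walk along $\C$ then costs $\Omega(r)$ vertices, not a constant, and your bound of $17$ collapses. The paper avoids this entirely by never walking along $\C$: it instead takes \emph{two} neighbors $v_1,v_2$ of $v$ in the \emph{same} cluster $A$ (possible because $v$ has $\geq n_k/8$ edges spread over at most $r$ clusters), so the segment $v_1vv_2$ turns around at $v$ and returns to $A$. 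The short path is then closed up locally using edges of the reduced graph $R$, not edges of $\C$: one looks for a couple $(Q,R)$ both of whose clusters are adjacent in $R$ to the spouse $A'$ of $A$, giving a path $v_1vv_2\!-\!A'\!-\!Q\!-\!R\!-\!A'$ of constant length.

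Second, and more seriously, you dismiss the ``Otherwise'' escape clause as vacuous. It is not. The paper's case analysis shows that the only way the local construction above can fail is if a specific family $X_{AB}'$ of clusters---roughly half the clusters of $\C$, by the degree-sum bound $\sigma_2(R)\ge(1-2\d-4\epsilon)r$---spans no edge at all in $G$. But then the union of those clusters is an independent set of size at least $(\tfrac12-\epsilon)n$ in $G$, and Lemma~\ref{Lemma:IndepSet} produces the desired path partition outright. This dichotomy is the real content of the claim; your proposal replaces it with a counting argument that only establishes $v$ retains \emph{some} neighbor, which is far from enough to build a constant-length balanced absorber.
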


\begin{proof}
%If, for all selected vertices in the garbage set, there exists an absorbing path, then we are done. We create the most possible absorbing paths.
Absorbing paths are constructed iteratively, one for each vertex of $D$, in an arbitrary order.  Suppose some number of such absorbing paths have been created.  If we have created one for each vertex of $D$ within the restrictions of the claim, the proof is complete so suppose we have constructed at most $|D| - 1$ absorbing paths.  Vertices that have already been used and clusters that have lost at least $\frac{16(2\d + 7\epsilon)n}{\epsilon \r}$ vertices removed from consideration in following iterations.

Recall that $L$ is the order of each non-garbage cluster of $G$ in Lemma~\ref{Lemma:Regularity}.

\begin{fact}
If we have created at most $|D| - 1$ such paths, at most $\epsilon \r$ clusters would have order at most $L - \frac{16(2\d + 7\epsilon)n}{\epsilon \r}$.
\end{fact}

\begin{proof}
Since each absorbing path constructed in this claim has order at most $16$ (other than the vertex $v$), we lose at most $16$ vertices from $G_{\C}$ for each vertex of $D$.  The result follows.
\end{proof}

Let $v \in D$ such that there is no absorbing path for $v$ of order at most $16$.  Since $d(v) \geq \frac{n_{k}}{8}$, $v$ must have edges to at least $\frac{\r}{8k}$ clusters.  Let $A$ and $B$ be two clusters which are not already ignored to which $v$ has at least two edges to vertices that are not already in a path or an absorbing path.  For convenience, we call two clusters $X$ and $Y$ a \emph{couple} or \emph{spouses} if $X$ and $Y$ are consecutive on $\C$ and the pair $(X, Y)$ is super-regular.

The following facts are easily proven using the structure we have provided and the lemmas proven before.

\begin{fact}
$A$ and $B$ are not a couple.
\end{fact}

Otherwise it would be trivial to produce a $v$-absorbing path.

Let $A'$ and $B'$ denote the spouses of $A$ and $B$, respectively, let $a',b'\in R$ correspond to $A'$ and $B'$, respectively, and define the following sets of clusters:

\bi
\item $X_A:=\{$all couples $(P,Q)$ of clusters such that $pa'$ and $qa'$ are edges in $R\}$,
\item $X_B:=\{$all couples of clusters such that both clusters have an edge to $B'$ in $R\}$, and
\item $X_{AB}:=\{$all couples of clusters such that one spouse has an edge to both $A'$ and $B'$ in $R\}$.  In particular, let $X_{AB}'$ denote the clusters in $X_{AB}$ that are not the neighbors of $A'$ and $B'$.
\ei

Since we are considering two neighbors of $v$ in $A$ (and two neighbors of $v$ in $B$), say $v_{1}$ and $v_{2}$, if $X_{A}$ (or similarly $X_{B}$) contains even a single couple $(Q, R)$, then we can absorb $v$ using a path of the form $v_{1}vv_{2} - A' - Q - R - A'$.  Thus, we may actually assume $X_{A} = X_{B} = \emptyset$.

%From the definitions, it is clear that these sets are disjoint since otherwise we could easily build a $v$-absorbing path of order at most $16$.  
Our next fact follows from the fact that $\sigma_{2}(R) \geq (1 - 2\d - 4\epsilon)|R|$.

\begin{fact}
There are at most $(2\d - 4\epsilon)|R|$ clusters in $C$ which are not in $X_{AB}$.
\end{fact}

If there is an edge $xy$ between two (non-used) vertices in clusters in $X_{AB}'$, then there is a $v$-absorbing path of the form $v_{1}vv_{2} - A' - (X_{AB} \setminus X_{AB}') - xy - (X_{AB} \setminus X_{AB}') - A'$.  Thus, the graph induced on the vertices in clusters in $X_{AB}'$ contains no edges.  By Lemma~\ref{Lemma:IndepSet}, we have the desired set of paths. This completes the proof of Claim~\ref{Claim:Absorbing}.
\end{proof}

For each chosen vertex $x_{i}$, if $x_{i} \notin G_{C}$, use Menger's Theorem~\cite{M1927} to construct a shortest path to a vertex, say $x_{i}'$, in $G_{C}$.  Using an edge of a super-regular pair first, construct a balancing path from $x_{i}'$ through every cluster of $G_{C}$.  Note that, since the pairs are either $\epsilon$-regular or $(\epsilon, \d)$-super-regular, using Lemma~\ref{Lemma:ShortPathInSuper}, this path can be constructed to use at most $2$ vertices from each cluster.

First suppose the path starting at $x_{i}$ already has order at least $n_{i} - 16$.  In this case, we add at most $16$ vertices using a super-regular pair (and Lemma~\ref{Lemma:ShortPathInSuper}) or discard any excess vertices to obtain the desired path.  If a coupled pair of clusters in $G_{C}$ is left unbalanced by this process, we simply remove a vertex from the larger cluster to $D$.  Note that repeating this for each short path adds at most $k - 1$ vertices to $D$.

By Claim~\ref{Claim:Absorbing}, since $|D| \leq (2\d + 7\epsilon)n$, we can construct an absorbing path for each vertex $v \in D$ where these paths are all disjoint.  Let $P^{v}$ be an absorbing path for $v$ with ends of $P^{v}$ in clusters $C_{i}$ and $C_{i + 1}$.  Suppose
$uw$ is the edge of $P_{k}$ from $C_{i}$ to $C_{i + 1}$.  Then using Lemma~\ref{Lemma:ShortPathInSuper}, we can replace the edge $uw$ with the path $P^{v}$ with the addition of at most $4$ extra vertices at either end.  Note that absorbing a vertex $v \in D$ into a path $P_{i}$ using the absorbing path will always correct the parity of the length of $P_{i}$.

For each path $P_{i}$ that is not already completed and not the correct parity, absorb a single vertex from $D$ into $P_{i}$.  This will correct the parity of the path.

Recall the assumption (without loss of generality) that $n_1 \leq \dots \leq n_k$. By the same process, all remaining vertices of $D$ can be absorbed into $P_{k}$.  This makes $|P_{k}|$ larger but since $|D| \leq (2\d + 7\epsilon)n$ and each absorbing path $P^{v}$ for $v \in D$ has order at most $17$, we get $|P_{k}| \leq 3|C| + 17(2\d + 7\epsilon)n < n_{k}$.

The following lemma, stated in \cite{MS14}, is an easy exercise using the definitions of $(\epsilon, \d)$-super-regular pairs.

\begin{lemma}[Magnant and Salehi \cite{MS14}]\label{Lemma:OnePair} % Cycle spectrum paper!!!
Let $U$ and $V$ be two clusters forming a balanced $(\epsilon, \d)$-super-regular pair with $|U| = |V| = L$.  Then for every pair of vertices $u \in U$ and $v \in V$, there exist paths of all odd lengths $\ell$ between $u$ and $v$ satisfying
\begin{description}
\item{\rm (a)} $3 \leq \ell \leq \d L$ and
\item{\rm (b)} $(1 - \d)L \leq \ell \leq L$.
\end{description}
\end{lemma}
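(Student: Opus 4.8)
The plan is to treat the two ranges of $\ell$ separately, since the short paths can be grown greedily while the long ones amount to asking for an (almost) Hamilton path of the pair.

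\emph{Short paths, $3\le\ell\le\d L$.} Here I would build the path greedily. A $u$--$v$ path uses exactly half of its vertices in each of $U$ and $V$, so a path of length at most $\d L$ uses at most roughly $\d L/2$ vertices of each class; since every vertex has more than $\d L$ neighbours in the opposite class, every vertex still has more than $\epsilon L$ of those neighbours off the path. Start from a path of length $3$: the sets $N_V(u)\setminus\{v\}$ and $N_U(v)\setminus\{u\}$ each have size more than $\d L-1>\epsilon L$, so by super-regularity there is an edge between them, giving a path $u\,v_1\,u_1\,v$. Given a $u$--$v$ path $P$ of odd length $\ell'$ with $\ell'+2\le\ell$, pick any edge $xy$ of $P$ with $x\in U$ and $y\in V$; the free neighbourhoods $N_V(x)\setminus V(P)$ and $N_U(y)\setminus V(P)$ both have more than $\epsilon L$ vertices, so super-regularity yields an edge $x'y'$ with $x'\in U\setminus V(P)$ and $y'\in V\setminus V(P)$, and replacing $xy$ by the path $x\,y'\,x'\,y$ lengthens $P$ by $2$. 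Iterating from $3$ produces a path of every odd length up to $\d L$.

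\emph{Long paths, $(1-\d)L\le\ell\le L$.} The detour trick breaks down, because a path of length close to $L$ uses about half of each class and an endpoint can then have all of its (merely $\ge\d L$) neighbours already on the path. Instead, put $m=(\ell+1)/2$, so $(1-\d)L/2\le m\le(L+1)/2$, and note that a $u$--$v$ path with exactly $m$ vertices in each class is the same as a Hamilton path from $u$ to $v$ of a balanced pair $(U',V')$ with $U'\subseteq U$, $V'\subseteq V$, and $|U'|=|V'|=m$, and that such a path has length $2m-1=\ell$. So the task reduces to (i) choosing $U'\ni u$ and $V'\ni v$ of size $m$ with $(U',V')$ still super-regular, and (ii) finding a Hamilton path of $(U',V')$ from $u$ to $v$. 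For (i), $\epsilon$-regularity and the density lower bound pass to $(U',V')$ automatically because $|U'|,|V'|>\epsilon L$; the only real issue is the minimum-degree condition, since $L-m$ can exceed $\d L$ and a careless deletion could erase a vertex's whole neighbourhood. I would choose the deleted $(L-m)$-subsets of $U\setminus\{u\}$ and $V\setminus\{v\}$ at random and apply a union bound together with a concentration estimate for the hypergeometric distribution, showing that with positive probability every surviving vertex retains at least $\tfrac{1}{2}\d m$ neighbours on the other side; hence $(U',V')$ is $(\epsilon',\tfrac{1}{2}\d)$-super-regular for some $\epsilon'$ slightly larger than $\epsilon$ and still far below $\d$.

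The main obstacle is step (ii): a balanced super-regular pair is bipartite Hamilton-connected, i.e.\ it has a Hamilton path between any two vertices lying on opposite sides. I would deduce this either from the Blow-Up Lemma (Lemma~\ref{Lemma:Blow-Up}) in the standard strengthening that permits the images of a bounded number of vertices --- here the two path endpoints --- to be prescribed, or by a direct P\'osa rotation--extension argument, using that in a regular pair any set of more than $\epsilon L$ vertices in one class has all but at most $\epsilon L$ vertices of the other class among its neighbours, so that rotations are abundant. With (ii) in place the lemma follows, and everything else is a routine unwinding of the definition of an $(\epsilon,\d)$-super-regular pair, consistent with the statement's billing as an easy exercise.
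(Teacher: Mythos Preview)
The paper does not actually prove this lemma: it is cited from \cite{MS14} and introduced only with the remark that it ``is an easy exercise using the definitions of $(\epsilon,\d)$-super-regular pairs.'' So there is no in-paper proof to compare against, and your proposal should be judged on its own merits.

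Your argument is correct. Part (a) is precisely the kind of two-vertex detour growth that the ``easy exercise'' tag invites, and your bookkeeping (at most $\d L/2$ vertices of each class used, leaving every vertex more than $\epsilon L$ free neighbours) is accurate. For part (b) you correctly identify why greed fails: a path of length close to $L$ occupies up to $(L+1)/2$ vertices of each class, which can exceed the guaranteed degree $\d L$. Your remedy --- restrict to random balanced subsets $U'\ni u$, $V'\ni v$ of size $m=(\ell+1)/2$ and then find a Hamilton $u$--$v$ path in $(U',V')$ --- is a standard and valid route. Two remarks worth noting: first, since $m\le (L+1)/2$ you are discarding roughly half of each cluster, so the hypergeometric concentration step is genuinely necessary and not just cosmetic; second, the Blow-Up Lemma as stated in the paper (Lemma~\ref{Lemma:Blow-Up}) does not include the clause allowing a bounded number of vertex images to be prescribed, so you are right to flag that you need either the well-known strengthened form or a direct P\'osa rotation--extension argument. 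Either route is routine for readers of the regularity method, though it does stretch the phrase ``easy exercise using the definitions'' somewhat; in the cited source one presumably finds the details.
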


For each $i$ with $n_{i}$ small, absorb a few pairs of vertices from each super-regular pair, using Lemma~\ref{Lemma:OnePair}, until $P_{i}$ has the desired order.  For each remaining index $i$, using Lemma~\ref{Lemma:OnePair} absorb entire super-regular pairs at a time (along with possibly a few vertices from other super-regular pairs) until each path $P_{i}$ has the desired order to complete the proof.

%%%%%%%%%%%%%%%%%%%%%%%%%%%%%%%%%%%%%%%%%%%%%%%%%%%%%%%%%%%%%%%%%%
\section{Proof of Lemma~\ref{Lemma:Degree}}\label{Section:Degree}
%%%%%%%%%%%%%%%%%%%%%%%%%%%%%%%%%%%%%%%%%%%%%%%%%%%%%%%%%%%%%%%%%%

Recall that Lemma~\ref{Lemma:Degree} claims Conjecture~\ref{Conj:E-O} holds when $\delta(G) \leq \frac{n_{k}}{8}$.

\begin{proof}
Let $a \in V(G)$ with $|N(a)|=\delta(G) \leq \frac{n_{k}}{8}$, and partition $V(G)$ as follows:
\beqs
B & = & G \setminus (a \cup N(a))\\
A & = & \left\{ v \in a \cup N(a): |N(v) \cap V(B)| < \frac{1}{8}(n+k-\delta(G)-1) \right\}\\
C & = & \left\{ v \in a \cup N(a): |N(v) \cap V(B)| \geq \frac{1}{8}(n+k-\delta(G)-1) \right\}
\eeqs
Note that, since $\sigma_2(G) \geq n+k-1$, the set $A$ induces a complete graph. Furthermore, the set $B$ has order $n-1-\delta(G)$, and $A$ is nonempty since $a \in A$.  Since $\sigma_{2}(G) \geq n + k - 1$ and $a$ has no edges to $B$, each vertex in $B$ has degree at least $n + k - 1 - \delta(G)$ which means $\delta(G[B]) \geq n + k - 1 - 2\delta(G)$.  Note that also $G$ is at least $(k + 1)$-connected.  First, a claim about subsets of $B$.

\begin{claim}\label{Claim:DegSmallPanConn}
Every subset of $B$ of order at least $\frac{3n_{k}}{8}$ is panconnected.
\end{claim}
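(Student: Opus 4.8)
The plan is to apply Williamson's theorem (Theorem~\ref{Thm:Williamson}): a graph $H$ on $m$ vertices with $\delta(H) \geq \frac{m+2}{2}$ is panconnected. So it suffices to show that any $B' \subseteq B$ with $|B'| \geq \frac{3n_k}{8}$ satisfies $\delta(G[B']) \geq \frac{|B'|+2}{2}$. First I would record the ambient bounds: since $\delta(G) \leq \frac{n_k}{8}$ and $n_k \leq n$, we have $|B| = n-1-\delta(G) \geq n - 1 - \frac{n_k}{8}$, and every vertex of $B$ has degree in $G[B]$ at least $n + k - 1 - 2\delta(G)$ (as established just before the claim, using that $a$ is adjacent to none of $B$ and $\sigma_2(G)\ge n+k-1$).

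Next I would estimate the minimum degree inside an arbitrary $B' \subseteq B$ with $|B'| \geq \frac{3n_k}{8}$. For $v \in B'$, the neighbors of $v$ lost by restricting from $B$ to $B'$ number at most $|B| - |B'| \leq (n-1-\delta(G)) - \frac{3n_k}{8}$, so
\[
\deg_{G[B']}(v) \;\geq\; \bigl(n + k - 1 - 2\delta(G)\bigr) - \Bigl((n-1-\delta(G)) - \tfrac{3n_k}{8}\Bigr) \;=\; k - \delta(G) + \tfrac{3n_k}{8}.
\]
On the other hand $|B'| \leq |B| = n - 1 - \delta(G) \leq n_k + (k-1)(n_k - 1) - 1 - \delta(G)$ is too crude; instead I would simply use $|B'| \le n - 1 - \delta(G)$ together with the fact that we only need to compare against $\frac{|B'|+2}{2}$, and here it is cleaner to bound $|B'|$ directly. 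Actually the key point is that $|B'|$ can be as large as $|B|$, so I must be careful: the claim is about \emph{every} subset of order \emph{at least} $\frac{3n_k}{8}$, which includes large subsets. For a subset $B'$ of size close to $|B|$, the degree bound $\deg_{G[B']}(v) \geq k - \delta(G) + \frac{3n_k}{8}$ from above is weak, so instead for large $B'$ I would argue directly from $\delta(G[B]) \geq n + k - 1 - 2\delta(G) \geq n + k - 1 - \frac{n_k}{4} \geq \frac{n}{2}$ (using $n_k \leq n$ and $n$ large), which already exceeds $\frac{|B|+2}{2} \geq \frac{|B'|+2}{2}$ since $|B'| \leq |B| \le n$.

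So the argument splits by the size of $B'$, but in fact a single estimate handles both regimes: for any $B'$ with $\frac{3n_k}{8} \leq |B'| \leq |B|$ and any $v \in B'$,
\[
\deg_{G[B']}(v) \;\geq\; \delta(G[B]) - (|B| - |B'|) \;\geq\; (n+k-1-2\delta(G)) - (n - 1 - \delta(G) - |B'|) \;=\; |B'| + k - \delta(G),
\]
and since $\delta(G) \leq \frac{n_k}{8} \leq \frac{n_k}{2} \leq \frac{|B'|}{2} \cdot \frac{8}{3} \cdot \frac{1}{2}$... rather, the cleanest finish: $\deg_{G[B']}(v) \geq |B'| + k - \delta(G) \geq |B'| - \frac{n_k}{8}$, and since $|B'| \ge \frac{3n_k}{8}$ we get $\frac{n_k}{8} \le \frac{|B'|}{3} < \frac{|B'| - 2}{2}$ for $|B'|$ large, hence $\deg_{G[B']}(v) > \frac{|B'|+2}{2}$. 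Then Theorem~\ref{Thm:Williamson} applies and $G[B']$ is panconnected. The main obstacle I anticipate is purely bookkeeping: making sure the inequality $\delta(G[B']) \geq \frac{|B'|+2}{2}$ is verified uniformly over the whole range $\frac{3n_k}{8} \leq |B'| \leq n - 1 - \delta(G)$, which requires combining the two degree lower bounds (the ``loss from restriction'' bound for small $B'$ and the raw $\delta(G[B])$ bound for large $B'$) and invoking $k \geq 3$, $\delta(G) \le \frac{n_k}{8}$, and $n$ sufficiently large; there is no structural difficulty beyond this.
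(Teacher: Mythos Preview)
Your proposal is correct and follows essentially the same route as the paper: from $\delta(G[B]) \geq n+k-1-2\delta(G)$ and $|B| = n-1-\delta(G)$ one gets $\delta(G[B']) \geq |B'| + k - \delta(G) \geq |B'| - \frac{n_k}{8}$ for any $B' \subseteq B$, and then $|B'| \geq \frac{3n_k}{8}$ (with $n$ large) forces $\delta(G[B']) > \frac{|B'|+2}{2}$, so Theorem~\ref{Thm:Williamson} applies. Your write-up detours through an unnecessary small-$B'$/large-$B'$ case split before arriving at the single unified estimate; in a clean version you should go straight to that bound.
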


\begin{proof}
With $|B| = n - \delta(G) - 1$ and $\delta(G[B]) \geq n + k - 1 - 2\delta(G)$, we see that $\delta(G[B]) \geq |B| - \delta(G) \geq |B| - \frac{n_{k}}{8}$.  Therefore, for any subset $B' \subseteq B$ with $|B'| \geq \frac{3n_{k}}{8}$, we have $\delta(G[B']) \geq |B'| - \frac{n_{k}}{8} > \frac{|B'| + 2}{2}$.  By Theorem~\ref{Thm:Williamson}, we see that $B'$ is panconnected.
\end{proof}

Consider $k$ selected vertices $X = \{x_1, \dots, x_k\} \subseteq V(G)$. Let $X_{A}$ denote the (possibly empty) set $X \cap A$ and let $X_{A}'$ denote $X_{A} \cup v$ where $v \in A \setminus X_{A}$ if such a vertex $v$ exists.  If no such vertex $v$ exists, then let $X_{A}' = X_{A}$.  The vertices of $X_{A}'$ will serve as start vertices for paths that will be used to cover all of $A$.  By Menger's Theorem, since $\kappa(G) \geq k + 1$, there exists a set of disjoint paths $\mathscr{P}_{A}$ starting at the vertices of $X_{A}'$ and ending in $B$ and avoiding all other vertices of $X$. Choose $\mathscr{P}_A$ so that each path is as short as possible, contains only one vertex in $B$ and, by construction, has order at most $4$.  If any of the paths in $\mathscr{P}_{A}$ begins at a selected vertex $x_{i}$ and has order at least $n_{i}$, we call this desired path completed and remove the first $n_{i}$ vertices of the path from the graph and continue the construction process.  If $A \setminus V(\mathscr{P}_{A}) \neq \emptyset$, let $P_{v}$ be a path using all remaining vertices and ending at $v$.  This path $P_{v}$ together with the path of $\mathscr{P}_{A}$ corresponding to $v$ provides a single path that cleans up the remaining vertices of $A$ and ends in $B$.  The ending vertices of these paths, the vertices of $B$, will serve as proxy vertices for the start vertices ($v$ or $x_{i} \in X \cap A$).  Thus far, we have constructed paths that cover all of $A$, start at vertices of $X \cap A$ (when such vertices exist) and end in $B$.

As vertices of $B$ are selected and used on various paths, we continuously call the set of vertices in $B$ that have not already been prescribed or otherwise mentioned the \emph{remaining} vertices in $B$.  For example, so far, we have $B \setminus (X \cup V(\mathscr{P}_{A}))$ are the remaining vertices of $B$.  Our goal is to maintain at least $\frac{3n_{k}}{8}+1$ remaining vertices to be able to apply Claim~\ref{Claim:DegSmallPanConn} as needed within these remaining vertices.

Since $|C| \leq \delta(G) \leq \frac{n_{k}}{8}$ and $d_{B}(u) \geq \frac{1}{8} (n + k - \delta(G) - 1)$ for all $u \in C$, there exists a set of two distinct neighbors in $B \setminus (X \cup V(\mathscr{P}_{A}))$ for each vertex in $C$.  For each vertex $x_{i} \in X \cap C$, select one such vertex to serve as a proxy for $x_{i}$ and leave the other aforementioned neighbor in the remaining vertices of $B$.  By Claim~\ref{Claim:DegSmallPanConn}, there exists a path through the remaining vertices of $B$ with at most one intermediate vertex from one neighbor of a vertex of $C$ to a neighbor of another vertex of $C$.  Since $|C| \leq \frac{n_{k}}{8}$, such paths can be built and strung together into a single path $P_{C}$ starting and ending in $B$, containing all vertices of $C \setminus X$ with $|P_{C}| < 4 |C| \leq \frac{n_{k}}{2}$.

We may now construct what is left of the desired paths within $B$. The paths $P_{1}, P_{2}, \dots, P_{k - 1}$ can be constructed in any order starting at corresponding proxy vertices and ending at arbitrary remaining vertices of $B$ using Claim~\ref{Claim:DegSmallPanConn} in the remaining vertices of $B$.  Finally, there are at least
\beqs
|B| - \left|B \cap \left(\cup_{i = 1}^{k - 1} V(P_{i})\right)\right| - |B \cap V(\mathscr{P}_{A})| - |B \cap V(P_{C})| & \geq & (n - 1 - \delta(G)) - (k + 1) - (3|C|)\\
~ & > & \frac{3n_{k}}{8} + 1
\eeqs remaining vertices in $B$.
With these and Claim~\ref{Claim:DegSmallPanConn}, we construct a path with at most one internal vertex from an end of $P_{C}$ to the proxy of $v$ (if such a vertex exists) and a path containing all remaining vertices of $B$ from $x_{k}$ (or its proxy) to the other end of $P_{C}$.  This completes the construction of the desired paths and thereby completes the proof of Lemma~\ref{Lemma:Degree}.
\end{proof}

%%%%%%%%%%%%%%%%%%%%%%%%%%%%%%%%%%%%%%%%%%%%%%%%%%%%%%%%%%%%%%%%%%%%%
%%%%%%%%%%%%%%%%%%%%%%%%%%%%%%%%%%%%%%%%%%%%%%%%%%%%%%%%%%%%%%%%%%%%%
\section{Proof of Lemma~\ref{Lemma:IndepSet}}\label{Section:IndepSet}
%%%%%%%%%%%%%%%%%%%%%%%%%%%%%%%%%%%%%%%%%%%%%%%%%%%%%%%%%%%%%%%%%%%%%
%%%%%%%%%%%%%%%%%%%%%%%%%%%%%%%%%%%%%%%%%%%%%%%%%%%%%%%%%%%%%%%%%%%%%

Recall that Lemma~\ref{Lemma:IndepSet} says for a positive integer $k$, a small $\epsilon = \epsilon_k > 0$, and a graph $G$ of order $n \geq n(\epsilon)$, if $\sigma_2(G) \geq n + k - 1$ and $\alpha(G) \geq \left(\frac{1}{2} - \epsilon\right)n$, then $G$ satisfies Conjecture~\ref{Conj:E-O}.

\begin{proof}
Let $A$ be a maximum independent set of $G$, and let $B = V(G) \setminus A$. By the assumption on $\sigma_2(G)$, we have $|B| \geq \frac{1}{2}(n + k - 1)$. This implies
\begin{equation*}
\left(\frac{1}{2} - \epsilon\right)n \leq |A| \leq \frac{1}{2}(n - k + 1)
\end{equation*}
and
\begin{equation}\label{Equation: B}
\frac{1}{2}(n + k - 1) \leq |B| \leq \left(\frac{1}{2} + \epsilon\right)n.
\end{equation}

%Let $O_A$ and $O_B$ denote the set of selected vertices with corresponding odd length. 
Let $t= |B|-|A|+k$; therefore, we have
\begin{equation*}
2k-1 \leq t \leq 2\epsilon n + k.
\end{equation*}
Let $C$ denote the set of vertices in $B$ with fewer than $\frac{t-1}{2}$ neighbors in $B$. Since $\frac{1}{2}(n+k-1) - |A| \leq \frac{|B|-|A|+k-1}{2} = \frac{t-1}{2}$, we see $C$ is necessarily a clique on at most $\frac{t-1}{2}$ vertices. %Furthermore, we have $\delta(G[B] - C) \geq \frac{t-1}{2}$.

\begin{claim}\label{Lemma:B'}
Let $B'$ be the set of vertices in $B$ with at least $\frac{|A|-1}{|B|}\cdot\frac{1}{2}(n+k-1) > \left(\frac{1}{2}-\frac{1}{100k}\right)n$ neighbors in $A$. Then $|B'| \geq \frac{1}{2}(n+k-1)$.
\end{claim}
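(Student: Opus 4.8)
The plan is to bound the number of edges between $A$ and $B$ from below, using the independence of $A$ together with the hypothesis $\sigma_{2}(G) \geq n+k-1$, and then to deduce that all but a few vertices of $B$ must have many neighbours in $A$.

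Write $\beta = \frac{1}{2}(n+k-1)$, so that the threshold in the statement is $T := \frac{(|A|-1)\beta}{|B|}$. First I would verify the numerical inequality $T > \left(\frac{1}{2} - \frac{1}{100k}\right)n$: since $|A| \geq \left(\frac{1}{2}-\epsilon\right)n$, $|B| \leq \left(\frac{1}{2}+\epsilon\right)n$ and $\beta \geq \frac{n}{2}$,
\[
T \;\geq\; \frac{\left(\left(\frac{1}{2}-\epsilon\right)n - 1\right)\cdot\frac{n}{2}}{\left(\frac{1}{2}+\epsilon\right)n} \;=\; \frac{\left(\frac{1}{2}-\epsilon\right)n-1}{1+2\epsilon},
\]
and this exceeds $\left(\frac{1}{2}-\frac{1}{100k}\right)n$ once $\epsilon \ll \frac{1}{k}$ and $n$ is large.

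For the main estimate, note that since $A$ is independent, any two of its vertices are non-adjacent, so $d(x)+d(y) \geq n+k-1 = 2\beta$ for all $x,y\in A$; in particular at most one vertex of $A$ has degree below $\beta$, so $e(A,B) = \sum_{x\in A} d(x) \geq (|A|-1)\beta$, which says exactly that the average value of $d_A(b)$ over $b\in B$ is at least $T$. Rewriting, $\sum_{b\in B}\bigl(|A|-d_A(b)\bigr) = |A|\,|B| - e(A,B) \leq |A|\,|B| - (|A|-1)\beta$, while each vertex $b\notin B'$ contributes $|A|-d_A(b) > |A|-T = \frac{|A|\,|B|-(|A|-1)\beta}{|B|}$ to this same sum. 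Comparing the two quantities bounds $|B\setminus B'|$, and the target is to drive that bound down to $|B|-\beta$, which is precisely the assertion $|B'| \geq \beta$. Two supplementary facts should feed into the argument: every $b\in B$ has a neighbour in $A$ by maximality of $A$, so $d_A(b)\geq 1$; and any $b\in B$ with a non-neighbour $a_0\in A$ satisfies $d(b) \geq n+k-1-d(a_0) \geq 2\beta-|B| = |A|+k-1$, which bounds $d_B(b)$ from below and links the vertices of small $A$-degree to the clique $C$ of small-$B$-degree vertices produced earlier.

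The step I expect to be the main obstacle is sharpening this count. The crude inputs $d_A(b)\leq|A|$ and $e(A,B)\geq(|A|-1)\beta$ on their own only keep $|B\setminus B'|$ strictly below $|B|$, not down to $|B|-\beta$, so a purely averaging argument does not suffice. To close the gap I would also use the maximum-independent-set hypothesis structurally: a large set of vertices of $B$ that are simultaneously short of neighbours in $A$ has a large common non-neighbourhood in $A$ and cannot be (nearly) independent among themselves, as otherwise it would enlarge $A$; so such vertices are forced to carry many edges among themselves, which the degree-sum bound then restricts. Balancing these constraints against $e(A,B)\geq(|A|-1)\beta$ is the crux; once the count is tightened, $|B'|\geq\frac{1}{2}(n+k-1)$ follows immediately.
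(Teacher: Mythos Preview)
Your edge-counting setup is exactly the paper's: it too observes that all but at most one vertex of $A$ has at least $\beta=\tfrac12(n+k-1)$ neighbours in $B$, hence $e(A,B)\ge(|A|-1)\beta$, and then simply writes ``By the Pigeonhole Principle, we have our result.'' You are right that this does not follow --- the threshold $T$ is precisely the resulting lower bound on the \emph{average} of $d_A(b)$ over $b\in B$, so averaging alone yields only $|B'|\ge 1$, not $|B'|\ge\beta$. Your instinct to bring in the maximality of $A$ and the $\sigma_2$-forced $B$-degrees of low-$A$-degree vertices is reasonable, but it cannot rescue the claim, because the claim as stated is \emph{false}.

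Here is a counterexample. Take $|A|=(\tfrac12-\epsilon)n$, make $B=V(G)\setminus A$ a clique, set $s=|B|-\beta\approx\epsilon n$, and split $B=B_1\cup B_2$ with $|B_1|=\beta-m$ and $|B_2|=s+m$ for, say, $m=\lfloor n/4\rfloor$. Put in every edge of $A\times B_1$, and between $A$ and $B_2$ place a (nearly) biregular bipartite graph in which each $a\in A$ has exactly $m$ neighbours in $B_2$. Then every $a\in A$ has $d(a)=(\beta-m)+m=\beta$, so $\sigma_2$ holds for pairs inside $A$; for non-adjacent $a\in A$, $b\in B$ one has $d(a)+d(b)\ge\beta+1+(|B|-1)=\beta+|B|\ge 2\beta$; and since $B$ is a clique with every $b\in B$ having a neighbour in $A$, the set $A$ is indeed a maximum independent set. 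But each $b\in B_2$ has $d_A(b)\approx\dfrac{m|A|}{s+m}=\dfrac{(1/2-\epsilon)n}{1+4\epsilon}<\dfrac{(1/2-\epsilon)n}{1+2\epsilon}\approx T$, so $B'=B_1$ and $|B'|=\beta-m\approx n/4<\beta$. Thus neither the paper's one-line argument nor any sharpening along the lines you propose can establish the stated inequality; the statement itself would need to be weakened (lowering the threshold defining $B'$ makes each $b\notin B'$ contribute many more non-edges to $A$ and gives a usable upper bound on $|B\setminus B'|$, which is all the remainder of the section actually requires).
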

\begin{proof}
Each vertex in $A$ (except possibly one) has at least $\frac{1}{2}(n+k-1)$ neighbors in $B$, which means there are at least $(|A|-1)\cdot \frac{1}{2}(n+k-1)$ edges between $A$ and $B$. By the Pigeonhole Principle, we have our result.
\end{proof}

% By Claim~\ref{Lemma:B'} and~\ref{Equation: B}, we have $|D| \leq |B| - \frac{1}{2}(n+k-1) \leq \epsilon n - \frac{k-1}{2}$.
%\rn{Where does this lemma come from?}

A bipartite graph $U\cup V$ is \emph{bipanconnected} if for every pair of vertices $x,y\in U\cup V$, there exist $(x,y)$-paths of all possible lengths at least 2 of appropriate parity in $U\cup V$. That is, for every pair of vertices $x\in U$ and $y\in V$, there exist $(x,y)$-paths of every possible odd length except 1, and for every pair of vertices $x,y\in U$ (and $V$), there exist $(x,y)$-paths of every even length. Note that we must exclude the value 1 from our definition in order to allow graphs $U\cup V$ that are not complete bipartite. Also observe that the partite sets of a bipanconnected graph must have order within one of each other.

\begin{lemma}[Coll, Halperin and Magnant \cite{CHM13}]\label{Lemma:Bipanconnected}
If $G[U \cup V]$ is a balanced bipartite graph of order $2m$ with $\delta(G[U \cup V]) \geq \frac{3m}{4}$, then $G[U \cup V]$ is bipanconnected.
\end{lemma}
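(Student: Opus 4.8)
The plan is to reduce bipanconnectedness of $G[U\cup V]$ to two ingredients: a Hamilton-path (laceability) statement for dense nearly-balanced bipartite graphs, and a subgraph-selection step. The key observation is that an $(x,y)$-path of length $\ell$ in $G[U\cup V]$ is exactly a Hamilton path from $x$ to $y$ in an induced subgraph on $\ell+1$ vertices: when $x\in U$ and $y\in V$ the length $\ell$ is odd and the subgraph has $(\ell+1)/2$ vertices in each part, while when $x,y\in U$ the length $\ell$ is even and the subgraph has $\ell/2+1$ vertices from $U$ and $\ell/2$ from $V$. So it suffices, for each admissible pair $(x,y)$ and each feasible length $\ell$, to produce subsets $S_U\subseteq U$ with $x\in S_U$ and $S_V\subseteq V$ with $y\in S_V$, of the prescribed sizes, so that $G[S_U\cup S_V]$ has a Hamilton path from $x$ to $y$.

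First I would isolate the following sub-lemma: any bipartite graph whose two parts differ in size by at most one and whose minimum degree is at least (say) $\tfrac23$ of the size of its larger part has a Hamilton path between any admissible pair of endpoints --- endpoints in opposite parts when the parts are equal, and endpoints both in the larger part otherwise. I would prove this by the standard P{\'o}sa rotation--extension argument adapted to fixed endpoints (take a longest $(x,y)$-path, exploit the large neighborhoods to rotate and absorb any omitted vertex), or by quoting a bipartite Ore-type theorem for Hamilton-laceable graphs; since $\tfrac23$ of the part size is well above the Moon--Moser Hamiltonicity threshold of half the part size, no sporadic exceptional graphs arise. I expect this sub-lemma to be the main obstacle, since it carries the one genuinely graph-theoretic argument in the proof; everything else is bookkeeping around it.

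With the sub-lemma in hand I would split on the target length $\ell$. For short paths --- $\ell$ up to, say, $m/2$ --- I would build the path greedily: every vertex has at least $\tfrac34 m$ neighbors and any two vertices in the same part have at least $\tfrac34 m+\tfrac34 m-m=\tfrac12 m$ common neighbors, so one extends $x=w_0,w_1,\dots,w_{\ell-3}$ one vertex at a time while avoiding the $O(\ell)$ vertices used so far, and then closes up with a three-edge path from $w_{\ell-3}$ to $y$ (which exists because $w_{\ell-3}$ lies in the correct part and the relevant common neighborhoods have size $\tfrac12 m$, far more than the number of forbidden vertices). For longer $\ell$, set $p\approx\ell/2$ and choose $S_V$ uniformly at random among the subsets of $V$ of the required size containing $y$, and $S_U$ likewise in $U$; since each fixed vertex of $U$ has at least $\tfrac34 m$ neighbors in $V$, a Chernoff bound together with a union bound over the $m$ vertices of each part shows that with positive probability every vertex of $U$ has at least $\tfrac23 p$ neighbors in $S_V$ and symmetrically every vertex of $V$ has at least $\tfrac23 p$ neighbors in $S_U$. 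In either case $G[S_U\cup S_V]$ satisfies the hypothesis of the sub-lemma, so its Hamilton $(x,y)$-path is the desired path of length $\ell$ in $G$.

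Finally I would verify the boundary behavior against the definition of bipanconnected: the smallest admissible lengths, namely $2$ for same-part endpoints (one common neighbor) and $3$ for opposite-part endpoints (a direct three-edge construction), are immediate; length $1$ is deliberately excluded; and the largest lengths, $2m-1$ for opposite parts and $2m-2$ for same part, correspond to $S_U=U$ and $S_V=V$ (or $V$ with one vertex deleted), i.e.\ to a (near-)Hamilton $(x,y)$-path of $G$ itself, which the sub-lemma supplies because $\delta(G)\ge\tfrac34 m>\tfrac23 m$. For the finitely many small values of $m$ in which $\delta\ge\tfrac34 m$ already forces $G[U\cup V]$ to be complete bipartite (or to miss only a bounded number of edges), bipanconnectedness can be confirmed by direct inspection.
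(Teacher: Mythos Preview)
The paper does not prove this lemma; it is quoted from \cite{CHM13} (Coll, Halperin and Magnant) and used as a black box inside the proof of Lemma~\ref{Lemma:IndepSet}. There is therefore nothing in the present paper to compare your proposal against.

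That said, your outline is a reasonable route to the result. The reduction of an $(x,y)$-path of length $\ell$ to a Hamilton path in a suitably sized induced balanced or nearly-balanced bipartite subgraph is correct; the greedy construction for short lengths works because any two vertices in the same part share at least $\tfrac{3m}{4}+\tfrac{3m}{4}-m=\tfrac{m}{2}$ common neighbors; and for lengths $\ell\ge m/2$ the random-subset plus Chernoff argument does give, with positive probability, an induced subgraph of minimum degree at least $\tfrac{2}{3}$ of its part size. As you correctly identify, the genuine content sits in the Hamilton-laceability sub-lemma for dense (nearly) balanced bipartite graphs; a rotation--extension argument or an appeal to a bipartite Ore-type laceability theorem would need to be written out carefully there, but the threshold you chose ($\tfrac{2}{3}$ of the larger part) is comfortably above Moon--Moser, so no exceptional cases should intrude.
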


Our next claim shows that any reasonably large subsets of $B'$ induce bipanconnected subgraphs when paired with any corresponding subset of $A$.

\begin{claim}
For all $m \geq \frac{n}{25k}$, the $m$-subsets $U\subseteq A$ and $V\subseteq B'$ induce a bipanconnected subgraph of order $2m$.
\end{claim}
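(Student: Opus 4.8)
The plan is to derive this directly from Lemma~\ref{Lemma:Bipanconnected}. Since $U\subseteq A$ and $V\subseteq B'\subseteq B$ are disjoint of equal size $m$, the graph $G[U\cup V]$ is a balanced bipartite graph on $2m$ vertices, so it suffices to check that $\delta(G[U\cup V])\ge\frac{3m}{4}$. I will use the bounds already established, $|A|\le\frac{1}{2}(n-k+1)$ and $|B|\le\left(\frac{1}{2}+\epsilon\right)n$, the defining inequality of $B'$, and the fact that $\epsilon\ll\d\ll\frac{1}{k}$, which together with $m\ge\frac{n}{25k}$ gives both $2\epsilon n\le\frac{m}{4}$ and $\frac{n}{100k}\le\frac{m}{4}$.

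First I bound the degree of a vertex $v\in V$ into $U$. By definition of $B'$, $v$ has more than $\left(\frac{1}{2}-\frac{1}{100k}\right)n$ neighbors in $A$, of which at most $|A|-m$ avoid $U$; hence, using $|A|\le\frac{1}{2}(n-k+1)$, the vertex $v$ has more than $\left(\frac{1}{2}-\frac{1}{100k}\right)n-|A|+m\ge m-\frac{n}{100k}\ge\frac{3m}{4}$ neighbors in $U$.

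Next I bound the degree of a vertex $u\in U\subseteq A$ into $V$. Pick any $a'\in A$ with $a'\ne u$; since $A$ is independent and $|A|\ge 2$, such an $a'$ exists and $ua'\notin E(G)$, so $d_G(u)+d_G(a')\ge n+k-1$. As $A$ is independent, all neighbors of $a'$ lie in $B$, so $d_G(a')\le|B|\le\left(\frac{1}{2}+\epsilon\right)n$; therefore $u$ has more than $n+k-1-\left(\frac{1}{2}+\epsilon\right)n>\left(\frac{1}{2}-\epsilon\right)n$ neighbors in $B$. At most $|B|-m$ of these avoid $V$, so $u$ has more than $\left(\frac{1}{2}-\epsilon\right)n-|B|+m>m-2\epsilon n\ge\frac{3m}{4}$ neighbors in $V$. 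Combining the two bounds gives $\delta(G[U\cup V])\ge\frac{3m}{4}$, and Lemma~\ref{Lemma:Bipanconnected} finishes the proof.

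The arithmetic above is routine; the one step worth isolating is the lower bound on the $B$-degree of an arbitrary vertex of $A$ — rather than all but one of them, as in Claim~\ref{Lemma:B'} — which holds because the mate $a'$ used in the $\sigma_2$ inequality has all of its neighbors inside $B$ and hence cannot have large degree. I expect the only thing requiring genuine attention to be checking that the two error terms $\frac{n}{100k}$ and $2\epsilon n$ really are below $\frac{m}{4}$, which is precisely where the hypothesis $m\ge\frac{n}{25k}$ and the choice $\epsilon\ll\frac{1}{k}$ are used, and there is a comfortable constant-factor margin in both.
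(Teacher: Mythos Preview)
Your proof is correct and follows the same strategy as the paper: verify that $\delta(G[U\cup V])\ge\frac{3m}{4}$ and invoke Lemma~\ref{Lemma:Bipanconnected}. Your treatment of the $U$-side is in fact slightly more careful than the paper's, since you explicitly use a partner $a'\in A$ in the $\sigma_2$ inequality (with $d(a')\le|B|$) to get the degree bound for \emph{every} $u\in U$, rather than implicitly relying on an ``all but one'' statement.
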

\begin{proof}
For $m \geq \frac{n}{25k}$, let $U$ and $V$ be $m$-subsets of $A$ and $B'$, respectively. Each vertex in $U$ has at least $|V| - \epsilon n + \frac{k-1}{2} > \frac{3m}{4}$ neighbors in $V$. Each vertex in $B'$ has greater than $|U| - \left(|A| - \left(\frac{1}{2}-\frac{1}{100k}\right)n\right) > \frac{3m}{4}$ neighbors in $U$. It follows from Lemma~\ref{Lemma:Bipanconnected} that $G[U \cup V]$ is bipanconnected.
\end{proof}

Let $D = B\setminus B'$, and let $D'$ be the set of vertices in $B$ with fewer than $\frac{n}{100}$ neighbors in $A$. Note that every vertex in $D'$ has well over $\frac{n}{3}$ neighbors in $B'$.

\begin{claim}\label{Lemma:Few None in A}
%If $|B| - |A|$ equals $k-1$ or $k$, then $|D|=0$. If $|B| - |A| \geq k+1$, then
The sets $D$ and $D'$ satisfy $D'\subseteq D$ and
\begin{equation}%\label{Equation:Dt Bound}
|D'| \leq |D| \leq \frac{t-2k+1}{2}. %\leq |B| - |A| - k \leq t.
\end{equation}
\end{claim}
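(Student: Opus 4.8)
The plan is to split the claim into its two assertions: the inclusion $D' \subseteq D$ (which immediately yields $|D'| \leq |D|$) and the upper bound $|D| \leq \frac{t-2k+1}{2}$.

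For the inclusion, I would compare the two defining degree thresholds. By its definition (see Claim~\ref{Lemma:B'}), a vertex of $B'$ has at least $\frac{|A|-1}{|B|}\cdot\frac{1}{2}(n+k-1) > \left(\frac{1}{2} - \frac{1}{100k}\right)n$ neighbours in $A$, and for $k \geq 3$ this quantity is strictly larger than $\frac{n}{100}$. Hence a vertex of $B$ with fewer than $\frac{n}{100}$ neighbours in $A$ cannot lie in $B'$, so $D' \cap B' = \emptyset$ and therefore $D' \subseteq B \setminus B' = D$.

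For the size bound, the work is essentially already done by Claim~\ref{Lemma:B'}. Since $D = B \setminus B'$, we have $|D| = |B| - |B'|$, and Claim~\ref{Lemma:B'} gives $|B'| \geq \frac{1}{2}(n+k-1)$, so $|D| \leq |B| - \frac{1}{2}(n+k-1)$. Substituting $n = |A| + |B|$, the right-hand side becomes $\frac{|B| - |A| - (k-1)}{2}$, which by the definition $t = |B| - |A| + k$ is exactly $\frac{t - 2k + 1}{2}$, as desired.

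I do not expect a serious obstacle here; the only points needing care are that the estimate $|B'| \geq \frac{1}{2}(n+k-1)$ is used in the correct direction and that the constant inequality $\left(\frac{1}{2} - \frac{1}{100k}\right)n > \frac{n}{100}$ holds for all $k \geq 3$ (it does, with room to spare, since the left side is at least $\frac{149}{300}n$). One could alternatively re-derive $|D| \leq \frac{t-2k+1}{2}$ directly by double counting the edges between $A$ and $B$ --- each vertex of $A$ but at most one contributes at least $\frac{1}{2}(n+k-1)$ such edges, while each vertex of $D$ contributes fewer than the threshold defining $B'$ --- but that computation would essentially reprove Claim~\ref{Lemma:B'}, so invoking it directly is cleaner.
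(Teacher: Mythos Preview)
Your proposal is correct and follows essentially the same route as the paper: the paper also derives $D'\subseteq D$ directly from the definitions (since the $B'$-threshold exceeds $\tfrac{n}{100}$) and then bounds $|D| = |B|-|B'| \leq |B|-\tfrac{1}{2}(n+k-1) = \tfrac{|B|-|A|}{2}-\tfrac{k-1}{2} = \tfrac{t-2k+1}{2}$ via Claim~\ref{Lemma:B'}. Your write-up merely spells out the threshold comparison a bit more explicitly; note that the inequality $\left(\tfrac{1}{2}-\tfrac{1}{100k}\right)n > \tfrac{n}{100}$ in fact holds for every positive integer $k$, not just $k\geq 3$.
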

\begin{proof}
By the definition of $D$, we have $D'\subseteq D$. By Claim~\ref{Lemma:B'}, we observe
\begin{equation*}
\begin{aligned}
|D|	&\leq |B| - \frac{1}{2}(n+k-1) \\
	&= |B| - \frac{n}{2} - \frac{k-1}{2} \\
	&= \frac{|B|-|A|}{2} - \frac{k-1}{2};
\end{aligned}
\end{equation*}
it follows that $|D'| \leq |D| \leq \frac{t-2k+1}{2}$.
%use a parity argument on $n$, $k$, and $n_1,\dots,n_k$ from Conjecture~\ref{Conjecture:E-O}. Suppose $n$ is even. It is impossible for all values $n_1,\dots,n_k$ to be odd, and therefore, we see $t \geq |B| - |A| - (k-1) \geq 0$.
\end{proof}

When creating the desired path partition of $G$, we must make sure that all vertices in $G$ are included. In particular, since $|B| > |A|$, we must ensure that each vertex in $B$ may be included in some path. We show this by creating a path $Q$ that contains more vertices in $B$ than in $A$. The path $Q$ will contain all vertices in $D'$ and roughly (but not necessarily exactly) the appropriate number of additional vertices in $B$. We make this discrepancy exact by considering $Q'$, the collection of paths including $Q$ and all vertices in $D$. %, including disjoint, 2- or 3-vertex paths beginning with $x_i$ for all $x_i\in D\cap X$. %The collection $Q'$ %also contains a specific number of extra vertices in $B$ than in $A$ to 
%accounts for both the size difference between $B$ and $A$ and the parity of $n_1,\dots,n_k$, the desired lengths of our desired paths. % so that $Q$ contains $t$ more vertices in $B$ than in $A$ and $R$ contains all vertices in $B\setminus (B' \cup Q \cup X)$.

Before we create $Q$ and $Q'$, we first specify the necessary value $\tau$, the number of additional vertices in $B$ that we must account for before creating our desired paths. Let $X = \{x_1,\dots,x_k\}$ be the set of arbitrarily chosen endpoints for the desired paths. Let $O_A$ be the set of vertices $x_i\in A\cap X$ such that $n_i$ is odd. Let $O_B'$ be the set of vertices $x_i\in B'\cap X$ such that $n_i$ is even. Recall that $D'\subseteq D$ is the set of vertices in $B$ with fewer than $2|D|$ neighbors in $A$. Let $E_D'$ be the set of vertices $x_i\in D'\cap X$ such that $n_i$ is even. Lastly, let $P_D'$ be the path containing 
\begin{itemize}
\item all vertices in $D'\setminus X$,
\item %a unique neighbor $\beta'_1\in B'\setminus X$ of $d'_1$ and, for $j>1$, 
two unique neighbors $b'_j,\beta'_j\in B'\setminus X$ of $d'_j$, and
\item for $j \geq 1$, the unique neighbor $a_j\in A\setminus X$ of $\beta'_j$ and $b_{j+1}$.
\end{itemize}
To ensure $\tau \geq 0$ (see Claim~\ref{Lemma:Tau Bounds}) or that our desired path $Q$, an extension of $P_D'$, always begins in $B$ and ends in $A$, we may need to drop the first vertex $b'_1$ of $P_D'$ so that $P_D'$ begins at $d'_1$. As a result, the path $P_D'$ contains either $2|D'\setminus X|$ or $2|D'\setminus X| - 1$ more vertices in $B$ than in $A$ and of course only exists if $D'\setminus X$ is nonempty. Thus, we have $P_D' = \left\{(b'_1),d'_1,\beta'_1,a_1,b'_2,\dots,b'_{|D'\setminus X|},d'_{|D'\setminus X|},\beta'_{|D'\setminus X|},a_{|D'\setminus X|}\right\}$, with the vertex $b'_1$ in parentheses to denote that $b'_1$ is not always included in $P_D'$.

\begin{claim}\label{Lemma:Tau}
Let $G=A\cup B$ as defined in Lemma~\ref{Lemma:IndepSet}. Then
\begin{equation}\label{Equation:Tau}
\tau = 
\begin{cases}
   |B| - |A| + |O_A| - |O_B'| - |D'\cap X| - |E_D'| 		           &\text{if } |D'\setminus X| = 0 \\
   |B| - |A| + |O_A| - |O_B'| - |D'\cap X| - |E_D'| - (|P_D'\cap B| - |P_D'\cap A|)		   &\text{if } |D'\setminus X| \geq 1.
  \end{cases}
\end{equation}
%and
%\begin{equation}\label{Equation:Tau Bound}
%0 \leq \tau \leq t.
%\end{equation}
\end{claim}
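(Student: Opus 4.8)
The plan is to read \eqref{Equation:Tau} off as a bookkeeping identity for the eventual path partition, built around the trivial but decisive equation
\begin{equation*}
\sum_{i=1}^{k}\bigl(|P_i\cap B|-|P_i\cap A|\bigr)=|B|-|A|,
\end{equation*}
which holds for every partition of $V(G)$ into paths $P_1,\dots,P_k$. First I would describe the shape the construction will take: all of $A$ together with a balanced subset of $B'$ forms a bipanconnected bipartite ``core'', through which the bulk of every $P_i$ is routed; the fully explicit path $P_D'$ absorbs the vertices of $D'\setminus X$; and the single path $Q$ (an extension of $P_D'$, beginning in $B$ and ending in $A$) is the only place, apart from $P_D'$, where a surplus of $B$-vertices over $A$-vertices is permitted to accumulate. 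The quantity $\tau$ introduced above — the number of additional $B$-vertices to be accounted for — is exactly the number of $B$-vertices that $Q$ must carry beyond those already placed on $P_D'$, so the formula amounts to solving the displayed identity for this one unknown.

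Next I would run through the start vertices $x_1,\dots,x_k$ by cases. For each $i$, whether $x_i$ lies in $A$, in $B'$, or in $D'$, together with the parity of $n_i$, forces the endpoint configuration of the core-portion of $P_i$ (because $A$ is independent, a core-path with both ends in $A$ must have odd order, one with an end in each side must have even order, and so on), and this pins down $|P_i\cap B|-|P_i\cap A|$ up to the slack deliberately pushed onto $Q$. Carrying this out produces the correction terms: a start in $A$ with $n_i$ odd forces $P_i$ to finish in $A$, a deficit that $Q$ must make up, giving $+|O_A|$; a start in $B'$ with $n_i$ even, and a start in $D'$ (whose vertices have almost no neighbours in $A$, so such a path must step at once into $B'$), contribute the terms $-|O_B'|$, $-|D'\cap X|$ and $-|E_D'|$, the last two being split because a $D'$-start pays an unconditional unit plus an additional unit when $n_i$ is even. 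Summing these contributions and subtracting from $|B|-|A|$ gives the first line of \eqref{Equation:Tau}.

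The second line comes from additionally discounting the surplus hard-wired into $P_D'$. Since $P_D'$ is completely explicit, one reads off its vertex list directly: it uses $|D'\setminus X|$ vertices of $A$ and either $3|D'\setminus X|$ or $3|D'\setminus X|-1$ vertices of $B$, so $|P_D'\cap B|-|P_D'\cap A|$ equals $2|D'\setminus X|$ or $2|D'\setminus X|-1$ according as $b'_1$ is retained or dropped. Subtracting this from the running total yields the case $|D'\setminus X|\geq 1$; the eventual choice of whether to keep $b'_1$ is made afterwards, precisely so that $\tau$ lands in the range required by Claim~\ref{Lemma:Tau Bounds}.

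The main obstacle I expect is the parity bookkeeping, and in particular the double role of a vertex $x_i\in D'\cap X$: it is at once a prescribed endpoint that must be balanced against the others through the core, and a vertex of $D'$ that, were it not a designated start, would have been swept up by $P_D'$; one must be sure it is counted exactly once and with the correct parity adjustment, and that the accounting for $D'$, $D\setminus D'$, and the clique $C$ neither overlaps nor leaks. A convenient final sanity check is to specialize: with every $x_i\in A$ and every $n_i$ odd the formula should collapse to $\tau=t$, and with every $x_i\in A$ and every $n_i$ even to $\tau=|B|-|A|$, matching the fact that in the first scenario every path runs a unit short in $B$ while in the second every path is already balanced.
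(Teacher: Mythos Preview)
Your proposal is correct and follows essentially the same approach as the paper: both argue term-by-term that each summand in \eqref{Equation:Tau} records one source of surplus $B$-vertices in the eventual path decomposition. Your framing via the partition identity $\sum_i(|P_i\cap B|-|P_i\cap A|)=|B|-|A|$ is a cleaner scaffold than the paper's more informal ``each term accounts for a necessary quantity,'' but the accounting itself---the parity cases for $x_i\in A$, $x_i\in B'$, $x_i\in D'$, and the explicit computation of $|P_{D'}'\cap B|-|P_{D'}'\cap A|$---is identical.
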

\begin{proof}
We prove this statement by showing that each term in~\eqref{Equation:Tau} accounts for a necessary quantity. Since $|B| > |A|$ and $A$ is an independent set, it is clear that we must account for $|B|-|A|$ additional vertices in $B$. For each $x_i\in O_A$ (symmetrically $x_i\in O_B'$), the path beginning at $x_i$ that alternates between $A$ and $B$ uses one additional vertex in $A$ (symmetrically $B'$). This accounts for the term $|O_A| - |O_B'|$. Next, we cannot assume $e(D',A) > 0$ and hence must use at least one additional vertex in $B$ to account for each vertex in $D'$. By Claim~\ref{Lemma:B'}, every vertex in $D'$ contains sufficiently many neighbors in $B'\setminus X$ and hence, two unique neighbors in $B'\setminus X$. For all $x_i\in D'\cap X$, reserve the unique vertex $b_i\in B'\setminus X$ and its unique neighbor $a_i\in A\setminus X$. Hence, for each $x_i\in D'\cap X$, we have used one additional vertex in $B$, which accounts for the term $-|D'\cap X|$. Now, if $x_i\in E_D'$, then the desired path starting at $x_i$ and alternating between $A$ and $B$ will contain an \emph{extra} additional vertex in $B$ since the first two vertices and the last vertex of the path are all in $B$. This accounts for the term $-|E_D'|$. Lastly, the only way we can be guaranteed to include the vertices in $D'\setminus X=\left\{d'_1,\dots,d'_{|D'\setminus X|}\right\}$ in a path is to create $P_D'$ containing either $2|D'\setminus X|$ or $2|D'\setminus X| - 1$ more vertices in $B$ than in $A$.

Hence, we have~\eqref{Equation:Tau}.
\end{proof}

\begin{claim}\label{Lemma:Tau Bounds}
$0 \leq t - 2k \leq \tau \leq t - 2|D'\cap X| \leq t$.
\end{claim}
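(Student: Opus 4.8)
The plan is to verify the four inequalities $0 \leq t-2k$, $t-2k \leq \tau$, $\tau \leq t - 2|D'\cap X|$, and $t - 2|D'\cap X| \leq t$ one at a time, using the formula for $\tau$ from Claim~\ref{Lemma:Tau} together with the basic bounds $2k-1 \leq t$, $|D| \leq \frac{t-2k+1}{2}$ from Claim~\ref{Lemma:Few None in A}, and the crude estimates on $|O_A|, |O_B'|, |E_D'|$ coming from the fact that these are all subsets of the $k$-element set $X$. The first inequality $0 \leq t-2k$ is a slight strengthening of the already-known $t \geq 2k-1$; since $t = |B|-|A|+k$ and $|B|-|A| \geq k-1$ with $|B|-|A| \equiv n - 2|A| + 0 \pmod{2}$ having the same parity as $n$, one should note that $|B|-|A|+k$ and hence $t$ cannot equal $2k-1$ unless a parity obstruction is violated; more cleanly, $|D| \geq 0$ forces $t - 2k + 1 \geq 0$, i.e. $t \geq 2k-1$, and the extra unit comes from observing that if $t = 2k-1$ then $|D| = 0$ so $B' = B$, and then a direct parity/counting check (the total number of vertices to be distributed among the $k$ paths must match $\sum n_i = n$) rules this out, giving $t \geq 2k$. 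I would actually try first to see whether the intended argument is simply that $|D| \geq |D' \cap X| \cdot 0$ is too weak and instead use the reserved-neighbor structure to force $|D| \geq 1$ when needed; failing a clean argument I would fall back on the parity observation.

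For the inequality $t - 2k \leq \tau$: substitute the relevant case of~\eqref{Equation:Tau}. In the case $|D'\setminus X| = 0$ we have $\tau = |B|-|A| + |O_A| - |O_B'| - |D'\cap X| - |E_D'|$. Since $|B|-|A| = t - k$, this is $\tau = t - k + |O_A| - |O_B'| - |D'\cap X| - |E_D'|$, and we must show $|O_A| - |O_B'| - |D'\cap X| - |E_D'| \geq -k$. Because $O_B'$, $D'\cap X$, $E_D'$ are pairwise-related subsets of $X$ with $E_D' \subseteq D'\cap X$, the quantity $|O_B'| + |D'\cap X| + |E_D'| \leq |X \setminus (A\cap X)| + 2|D'\cap X|$; the cleanest bound is that $O_B'$ and $D'\cap X$ are disjoint (one lies in $B'$, the other in $D' = B\setminus B'$) while $E_D' \subseteq D'\cap X$, so $|O_B'| + |D'\cap X| + |E_D'| \leq |O_B'| + 2|D'\cap X| \leq |X| = k$ is false in general, and instead I would argue $|O_B'| + |D'\cap X| \leq k - |O_A|$ since $O_A, O_B', D'\cap X$ are pairwise disjoint subsets of $X$, giving $|O_A| - |O_B'| - |D'\cap X| \geq |O_A| - (k - |O_A|) = 2|O_A| - k \geq -k$, and then $-|E_D'| \geq -|D'\cap X|$, which is already accounted for — here I'd need to be a little careful and may instead use $|O_B'| + |E_D'| + |D'\cap X| \leq k + |D'\cap X|$ to close the gap against the $-2|D'\cap X|$ slack available from $t - 2k$ versus the true size of $|D|$. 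In the case $|D'\setminus X| \geq 1$, the extra term $-(|P_D'\cap B| - |P_D'\cap A|)$ subtracts either $2|D'\setminus X|$ or $2|D'\setminus X|-1$; combining $|D'\cap X| + |D'\setminus X| = |D'| \leq |D| \leq \frac{t-2k+1}{2}$ gives $2|D'\cap X| + 2|D'\setminus X| \leq t - 2k + 1$, which is exactly the room needed to absorb both the $-2|D'\setminus X|$ and $-|D'\cap X|$ terms against $t - k - (\text{loss}) \geq t - 2k$, with the $+1$ covering parity rounding.

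For $\tau \leq t - 2|D'\cap X|$: again substitute~\eqref{Equation:Tau}; since $|B|-|A| = t-k$ and $|O_A| \leq k - |O_B'| - |D'\cap X|$ (disjointness in $X$), we get $\tau \leq (t-k) + (k - |O_B'| - |D'\cap X|) - |O_B'| - |D'\cap X| - |E_D'| - (\ldots) = t - 2|O_B'| - 2|D'\cap X| - |E_D'| - (\ldots) \leq t - 2|D'\cap X|$, since every remaining term is nonnegative. The last inequality $t - 2|D'\cap X| \leq t$ is immediate from $|D'\cap X| \geq 0$. The main obstacle I anticipate is the bookkeeping in the middle two inequalities: keeping straight which of $O_A, O_B', E_D', D'\cap X, D'\setminus X$ are disjoint, which are nested, and matching the parity rounding ($\pm 1$) in the definition of $P_D'$ against the $+1$ slack in Claim~\ref{Lemma:Few None in A}'s bound $|D| \leq \frac{t-2k+1}{2}$; everything else is routine substitution.
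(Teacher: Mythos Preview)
Your upper-bound argument is fine and matches the paper's: using that $O_A$, $O_B'$, and $D'\cap X$ are pairwise disjoint in $X$ to get $|O_A| \leq k - |D'\cap X|$, and dropping the remaining nonpositive terms, gives $\tau \leq t - 2|D'\cap X|$.

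The lower bound is where your plan goes wrong. You read the chain literally and try to prove $t-2k \leq \tau$ as a standalone inequality, but this is \emph{false} in general: with $|D'\setminus X|=0$, minimizing gives $\tau = t - 2k - |D'\cap X|$, which is strictly below $t-2k$ whenever $|D'\cap X|>0$. The paper's statement of the chain is sloppy; what its proof actually establishes (and what is used afterward) is $0 \leq \tau$ and $\tau \leq t - 2|D'\cap X| \leq t$, not the intermediate link $t-2k \leq \tau$. Indeed the paper's own proof explicitly treats the case $t-2k=-1$, so it does not even claim $0 \leq t-2k$ as a freestanding fact.

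The paper's route to $\tau \geq 0$ is: from the same disjointness facts you noted, obtain $\tau \geq t - 2k - |D'\cap X|$ (and $\tau \geq t - 2k - |D'\cap X| - (|P_{D'}\cap B| - |P_{D'}\cap A|)$ when $|D'\setminus X|\geq 1$). Then feed in Claim~\ref{Lemma:Few None in A}'s bound $|D'|\leq |D|\leq \frac{t-2k+1}{2}$ to get $\tau \geq \frac{t-2k-1}{2}$, hence $\tau\geq 0$ once $t-2k\geq 1$. The residual cases $t-2k\in\{0,-1\}$ are disposed of by separate parity arguments (e.g.\ $t-2k=-1$ forces $|D|=0$ and a parity constraint on $|O_B'|$ that rules out the extremal configuration), and when $|D'\setminus X|\geq 1$ the paper exploits the freedom to include or drop $b_1'$ from $P_{D'}$ to recover the missing unit. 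You listed exactly these ingredients---the $|D|$ bound, the $\pm 1$ flexibility in $P_{D'}$, the parity observation---but deployed them toward the wrong target. Reorganize the lower-bound half to aim directly at $\tau \geq 0$ rather than at $\tau \geq t-2k$, and the argument goes through.
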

\begin{proof}
First note that $-k \leq |O_A|-|O_B'| \leq k$, and that $|D'| \leq |D| \leq \frac{|B|-|A|-k+1}{2}$. Also, we have $|D'\cap X| + |O_B'| \leq k$ and $|D'\cap X| + |O_A| \leq k$. Similarly, we have $|E_D'| + |O_B'| \leq k$ and $|E_D'| + |O_A| \leq k$. 
It is clear that a situation where $|O_A| = k$ and $|O_B| = |D| = 0$ results in a maximum value for $\tau$. It follows from~\eqref{Equation:Tau} that we have
\begin{equation}\label{Equation:Tau Max}
\begin{aligned}
\tau &\leq |B| - |A| + (k - |D'\cap X|) - 0 - |D'\cap X| - 0 \\
     &\leq t  - 2|D'\cap X| \\
     &\leq t.
\end{aligned}
\end{equation}

It also is clear that a situation where $|O_A| = 0$, $E_D' = D'\cap X$, and $|D'\cap X| + |O_B'| = k$ results in a minimum value for $\tau$. By~\eqref{Equation:Tau}, this gives
\begin{equation*}
\tau \geq 
\begin{cases}
|B| - |A| + 0 - (k-|D'\cap X|) - |D'\cap X| - |D'\cap X|		       		 &\text{if } |D'\setminus X| = 0 \\
|B| - |A| + 0 - (k-|D'\cap X|) - |D'\cap X| - |D'\cap X| - (|P_D'\cap B| - |P_D'\cap A|) &\text{if } |D'\setminus X| \geq 1,
\end{cases}
\end{equation*}
which reduces to
\begin{equation}\label{Equation:Tau Min}
\tau \geq 
\begin{cases}
t - 2k - |D'\cap X|      				&\text{if } |D'\setminus X| = 0 \\
t - 2k - |D'\cap X| - (|P_D'\cap B| - |P_D'\cap A|)     &\text{if } |D'\setminus X| \geq 1.
\end{cases}
\end{equation}

First consider the case where $|D'\setminus X| = 0$. Recall from Lemma~\ref{Lemma:Few None in A} that $|D'| \leq |D| \leq \frac{t-2k+1}{2}$. Then
\begin{equation*}
\begin{aligned}
t - 2k - |D'\cap X| &\geq t - 2k - |D'| \\
		    &\geq t - 2k - \frac{t-2k+1}{2} \\
		    &= \frac{t-2k-1}{2}.
\end{aligned}
\end{equation*}
Hence, if $t - 2k \geq 1$, then $\tau \geq 0$. However, we can still have $t - 2k = 0$ or $t - 2k = -1$. If $t - 2k = 0$, then $|A| = \frac{1}{2}(n-k)$ and $|B| = \frac{1}{2}(n+k)$, which implies $|D| = |D'\cap X| = 0$, and hence, that $\tau \geq t - 2k - |D'\cap X| = 0$. If $t - 2k = -1$, then $|A| = \frac{1}{2}(n-k+1)$ and $|B| = \frac{1}{2}(n+k-1)$, which implies $|D| = 0$. It follows that $n+k-1$ is even, and that either $n$ is odd and $k$ is even, or that $n$ is even and $k$ is odd. Either way, we must have $|O_B| \leq k-1$ or else the parities of $n$ and $k$ are violated. Since $|O_B| = k-|D'\cap X|$, we must have $|D'\cap X| \geq 1$, which contradicts $|D| = 0$. Hence, we have $\tau \geq 0$.

Now consider the case where $|D'\setminus X| \geq 1$. Again recalling Lemma~\ref{Lemma:Few None in A}, we have
\begin{equation*}
\begin{aligned}
t - 2k - |D'\cap X| - (|P_D'\cap B| - |P_D'\cap A|) &\geq t - 2k - 2|D'|.
\end{aligned}
\end{equation*}
It follows that $t - 2k - |D'\cap X| - (|P_D'\cap B| - |P_D'\cap A|)$ if and only if $|P_D'\cap B| - |P_D'\cap A| = 2|D'\setminus X|$ and $|D'\cap X| = 0$. Letting $|P_D'\cap B| - |P_D'\cap A| = 2|D'\setminus X| - 1$ ensures $\tau \geq 0$.
%From~\eqref{Equation:Tau Min}, this immediately gives $\tau \geq |B| - |A| - k \geq 0$.
%Suppose that $\tau < 0$. It is clear from~\eqref{Equation:Tau Min} that if $|B|-|A| \geq k+1$, then $\frac{|B|-|A|-k-1}{2} \geq 0$. However, if $|B|-|A| = k-1$, then we must have that $|A| = \frac{1}{2}(n-k+1)$ and $|B| = \frac{1}{2}(n+k-1)$, which implies that $|D| = 0$. It follows that $n+k-1$ is even, and that either $n$ is odd and $k$ is even, or that $n$ is even and $k$ is odd. Either way, we must have $|O_B| \leq k-1$ or else the parities of $n$ and $k$ are violated. Since $|O_B| = k-|D'\cap X|$, we must have $|D'\cap X| \geq 1$, which contradicts $|D| = 0$. Hence, we must have $\tau \geq 0$. More simply, if instead we have $|B|-|A| = k$, then $|A| = \frac{1}{2}(n-k)$ and $|B| = \frac{1}{2}(n+k)$, which implies $|D| = |D'\setminus X| = |D'\cap X| = 0$. From~\eqref{Equation:Tau Min}, this immediately gives $\tau \geq |B| - |A| - k \geq 0$. %It follows that $n+k$ is even, and that either $n$ is odd and $k$ is odd, or that $n$ is even and $k$ is even. Either way, we must have $|E_D'| \leq k-1$ or else the parity of $n$ and $k$ is violated. Since $|O_B| = k-|D'\cap X|$, we must have $|D'\cap X| \geq 1$, which contradicts $|D| = 0$. Hence, we must have $\tau \geq 0$.
%
%This argument along with~\eqref{Equation:Tau Max} show that $0 \leq \tau \leq t$.
\end{proof}

The fact that $\tau \geq 0$ ensures that, no matter what, the set $B$ contains enough vertices to account for a (relatively) large $D'$ and/or any number of odd-ordered paths beginning in $B$.

%consider the set $D = B\setminus B'$, the set of all vertices in $B$ with fewer than $\frac{|A|}{2|B|}(n+k-1)$ neighbors in $A$. Note that $|D| \leq |B| - \frac{1}{2}(n+k-1)$.

\begin{claim}\label{Lemma:Easy Indep}
If $G$ is a graph of sufficiently large order $n$ with $\delta(G) \geq \omega$, then for all $\gamma > 0$, the graph $G$ either contains
\begin{enumerate}
\item $\gamma \omega$ independent edges, or
\item $\lceil \omega\rceil$ vertices of degree at least $\frac{n}{2\gamma+1}$.
\end{enumerate}
\end{claim}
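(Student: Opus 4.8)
The plan is to argue by contradiction via a greedy extraction of a maximal matching. Suppose $G$ contains neither $\gamma\omega$ independent edges nor $\lceil\omega\rceil$ vertices of degree at least $\frac{n}{2\gamma+1}$. Let $M$ be a maximum matching in $G$; by assumption $|M| < \gamma\omega$, so the set $W$ of vertices saturated by $M$ has size $|W| = 2|M| < 2\gamma\omega$. Since $M$ is maximum, the set $I = V(G)\setminus W$ is independent, and every neighbor of every vertex of $I$ lies in $W$. Hence each vertex of $I$ has all of its (at least $\omega$) neighbors inside $W$.

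Next I would double-count edges between $I$ and $W$. On one hand, $e(I,W) \geq \omega\,|I| = \omega\,(n - |W|) \geq \omega\,(n - 2\gamma\omega)$, which is at least roughly $\omega n$ once $n$ is large relative to $\omega$ and $\gamma$ (this is where ``sufficiently large order'' is used: $2\gamma\omega = o(n)$, so $n - 2\gamma\omega \geq \frac{n}{2\gamma+1}\cdot(2\gamma+1)\cdot\frac{1}{\text{something}}$ — the clean inequality to aim for is $n - 2\gamma\omega \geq \frac{2\gamma}{2\gamma+1}\,n$, i.e. $|I|\geq\frac{2\gamma}{2\gamma+1}n$). On the other hand, $e(I,W) \leq \sum_{w\in W} d(w)$, and by our contradiction hypothesis all but at most $\lceil\omega\rceil - 1 < \omega$ vertices of $W$ have degree strictly less than $\frac{n}{2\gamma+1}$. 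Splitting the sum accordingly gives
\begin{equation*}
e(I,W) < \omega\cdot n + (|W| - \omega)\cdot\frac{n}{2\gamma+1} < \omega n + 2\gamma\omega\cdot\frac{n}{2\gamma+1}.
\end{equation*}
Comparing the lower and upper bounds for $e(I,W)$ and using $|I| \geq \frac{2\gamma}{2\gamma+1}n$ should yield a contradiction for $n$ large: the lower bound is about $\frac{2\gamma}{2\gamma+1}\omega n$ plus a genuinely larger main term once we account correctly, while the upper bound is about $\omega n + \frac{2\gamma}{2\gamma+1}\omega n$. I would tune the constants so that the high-degree vertices of $W$ cannot absorb the deficit; the key point is that there are fewer than $\omega$ of them while there are at least $\omega(n - 2\gamma\omega)$ edges to distribute, and the low-degree vertices of $W$ can carry at most $2\gamma\omega\cdot\frac{n}{2\gamma+1} < \omega n$ of them, leaving too many edges for too few high-degree vertices.

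The main obstacle is getting the arithmetic to close with the stated threshold $\frac{n}{2\gamma+1}$ rather than some other constant; the bound $|W| < 2\gamma\omega$ is exactly calibrated so that $|W|\cdot\frac{n}{2\gamma+1} < \omega n$ fails to cover $e(I,W) \geq \omega|I|$ unless there are $\geq\lceil\omega\rceil$ genuinely high-degree vertices in $W$ to make up the difference. I expect the cleanest route is: if fewer than $\lceil\omega\rceil$ vertices of $W$ have degree $\geq \frac{n}{2\gamma+1}$, bound $e(I,W) \leq (\lceil\omega\rceil - 1)n + |W|\cdot\frac{n}{2\gamma+1} < \omega n + \frac{2\gamma}{2\gamma+1}\omega n = \frac{(4\gamma+1)\omega n}{2\gamma+1}$; meanwhile $e(I,W) \geq \omega|I| = \omega(n - |W|) > \omega n - 2\gamma\omega^2$, which exceeds $\frac{(4\gamma+1)\omega n}{2\gamma+1}$ precisely when $n$ is large — so this is where I would need to double check that the constants line up (possibly replacing $2\gamma$ by a slightly smaller multiple of $\gamma$ in the matching bound, which is harmless since a maximum matching of size $<\gamma\omega$ is the hypothesis). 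Once the numeric contradiction is in hand, the claim follows.
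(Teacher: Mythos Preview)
Your setup is exactly the paper's: take a maximum matching $M$ of size $c<\gamma\omega$, set $W=V(M)$ and $I=V(G)\setminus W$, observe that $I$ is independent with every vertex of $I$ sending all of its $\geq\omega$ neighbours into $W$, and bound $e(I,W)\geq\omega|I|=\omega(n-2c)$ from below.

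The divergence is in the finish, and your contradiction does not close. You bound
\[
e(I,W)\;<\;(\lceil\omega\rceil-1)\,n\;+\;|W|\cdot\frac{n}{2\gamma+1}\;<\;\omega n+\frac{2\gamma\omega n}{2\gamma+1}\;=\;\frac{(4\gamma+1)\,\omega n}{2\gamma+1}
\]
and then assert that the lower bound $\omega(n-|W|)>\omega n-2\gamma\omega^{2}$ exceeds this ``precisely when $n$ is large''. But $\frac{4\gamma+1}{2\gamma+1}>1$, so your upper bound is strictly larger than $\omega n$ while your lower bound is strictly smaller than $\omega n$; the inequalities never cross, no matter how large $n$ is. The obstruction is structural rather than a matter of tuning: even $\lceil\omega\rceil-1$ vertices of $W$ of full degree into $I$ already absorb $(\omega-1)|I|$ of the edges, leaving only about $|I|$ for the low-degree vertices to cover, and $|W|\cdot\frac{n}{2\gamma+1}<\frac{2\gamma\omega n}{2\gamma+1}$ handles that comfortably whenever $\omega\geq 2$. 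A two-sided count of this shape can extract \emph{one} high-degree vertex from $W$, but it cannot force $\lceil\omega\rceil$ of them, and adjusting the matching bound does not help since $|M|<\gamma\omega$ is fixed by the hypothesis.

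The paper finishes differently: rather than bounding $e(I,W)$ from above, it applies pigeonhole directly on the $W$-side. With at least $\omega(n-2c)$ edges landing on the $2c$ vertices of $W$, the average degree of a vertex of $W$ into $I$ is at least
\[
\frac{\omega(n-2c)}{2c}\;=\;\frac{\omega}{c}\cdot\frac{n}{2}-\omega\;>\;\frac{n}{2\gamma}-\omega\;>\;\frac{n}{2\gamma+1}
\]
for $n$ sufficiently large, and the paper then asserts that at least $\omega$ vertices of $W$ meet this threshold. The paper is admittedly terse at that last step, but the point for you is that the intended finishing move is an averaging argument on the vertices of $W$, not a global upper bound on $e(I,W)$.
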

\begin{proof}
Consider a graph $G$ of sufficiently large order $n$ with $\delta(G) \geq \omega$. If there exist $\gamma\omega$ independent edges in $G$, then we are done, so suppose not. Consider the largest collection of $c<\gamma\omega$ independent edges, i.e., the largest matching in $G$. Call this set of edges $M_c$. Then $G = M_c \cup A$, where $A$ must induce an independent set of $n-2c$ vertices. Since $\delta(G) \geq \omega$, each vertex in $A$ must be adjacent to $\omega$ vertices in $M_c$. This means there are at least $\omega(n-2c)$ edges from $A$ to $M_c$. Therefore, there are at least $\omega$ vertices in $M_c$ with degree at least $\frac{\omega n-2c\omega}{2c} = \left(\frac{\omega}{c}\right)\frac{n}{2}-\omega > \frac{n}{2\gamma+1}$.
\end{proof}

Recall from before that there may exist a small clique $C\subset B$ of vertices in $B$ with fewer than $\frac{t-1}{2}$ neighbors in $B$. If $C\neq \emptyset$, then we can only be guaranteed that $\delta(G[B]) \geq |C|-1$. For each vertex $c\in C$, attach $\frac{t-1}{2} - d(c)$ edges to $c$ to create the graph $G'[B]$. Then $\delta(G'[B]) \geq \frac{t-1}{2}$. Let $\gamma=10$; by Claim~\ref{Lemma:Easy Indep}, the graph $G'[B]$ contains either $10(t-1)$ independent edges or $\left\lceil\frac{t-1}{2}\right\rceil$ vertices of degree at least $\frac{n}{21}$.

\setcounter{case}{0}
\begin{case}\label{Case:Matching}
$G[B]$ contains at least $5(t-1)$ independent edges.
\end{case}

This immediately implies $G[B]$ contains $10(t-1-|C|) \geq 5(t-1)$ independent edges.

Let $M = \left\{e_1,\dots,e_{|M|}\right\}$ be a maximum set of independent edges in $G[B]$. Disregard all edges in $M$ whose endpoints are both in $X$; there are at most $\frac{k}{2} < \frac{t-1}{2}$ such edges. 
%We first show no edges in $M$ may have endpoints that are both in $X$. Suppose $x_jx_m\in M$. ithout loss of generality, the vertex $x_j$ contains a unique neighbor $a_j\in A\setminus X$ and $x_m$ either contains a neighbor $a_m\in A$ or a unique neighbor $b_m\in B'$. Replace $x_jx_m$ with $x_ja_j$ and either $x_ma_m$ or $x_mb_m$. Either way, we may assume no edge $e_i$ consists of two vertices in $X$. 

If $t=1$, then consider an edge $Q = b\beta\in G[B]$, and skip to the construction of $R$. Now suppose $t>1$. 
%Let $D'$ be the set of vertices in $B$ with fewer than $2|D|$ neighbors in $A\setminus X$; by Lemma~\ref{Lemma:B'}, we have $D' \subseteq D$. 
As noted in the definition of $D'$, every vertex $d'_j\in D'$ contains sufficiently many neighbors in $B'$. As such, for some unique neighbor $\beta'_j$ of each $d'_j$, choose a new matching $M'$ to include every edge $d'_j\beta'_j$ and all other edges in $M$, removing as many as $2|D'|$ edges from $M$ if necessary. Hence, we have $e(M') \geq e(M) - \frac{k}{2} - 2|D'| + |D'| > 2(t-1)$ since $t > 1$. Construct the path $P_D'$. % (as in the proof of Claim~\ref{Lemma:Tau}) containing 
%\begin{itemize}
%\item all vertices in $D'\setminus X$,
%\item a unique neighbor $\beta'_1\in B'\setminus X$ of $d'_1$ and, for $j>1$, two unique neighbors $b'_j,\beta'_j\in B'\setminus X$ of $d'_j$, and
%\item for $j \geq 1$, the unique neighbor $a_j\in A\setminus X$ of $\beta'_j$ and $b_{j+1}$.
%\end{itemize}
Then $P_D' \supset D'\setminus X$ and $|P_D'\cap B| - |P_D'\cap A| = 2|D'\setminus X|-1$ or $|P_D'\cap B| - |P_D'\cap A| = 2|D'\setminus X|$.

Let $M'_X$ be the set of edges in $M'$ that contain a single vertex in $X$ as an endpoint. For all edges $x_i\beta_i\in M'_X$, let $\alpha_i\in A$ be a unique neighbor of $\beta_i$. In a similar fashion to the way we constructed $P_D'$, we may extend $P_D'$ into a path $Q$ by adjoining as many edges in $M' \setminus M'_X$ as necessary so that we have
\begin{equation*}
|Q\cap B| - |Q\cap A| = \tau - |X\cap D'|. %- |D\setminus (D'\cup X)|.
%t - 2k - |X\cap (D'\cup M)| - |D\setminus D'| \leq |Q\cap B| - |Q\cap A| \leq t - |X\cap (D'\cup M)| - |D\setminus D'|
\end{equation*}
%and contains all vertices in $D'$.
The inequalities $|M'| > 2(t-1) > t \geq \tau$ ensure that $M'$ contains enough edges to extend $P_D'$ into $Q$ (we are still assuming $t > 1$). Construct $Q$ so that the last vertex in $Q$ is in $A$, so that $Q$ begins in $B$ and ends in $A$.

Note that all vertices in $D\setminus D'$ have enough neighbors in $A$ so that each vertex in $D\setminus D'$ has two unique neighbors in $A\setminus X$. For all $x_i\in (D\setminus D')\cap X$, simply consider the unique neighbor $\alpha_i$. We may string all vertices in $D\setminus (D'\cup X)$ into a path $R$ so that $R$ contains as many vertices in $D\setminus (D'\cup X)$ as in $A$. We do this as follows. 
%For one vertex $d_1\in D\setminus (D'\cup X)$, choose a unique neighbor $\alpha_1\in A\setminus X$. 
For all $d_j\in D\setminus (D'\cup X)$, choose the unique neighbors $a_j,\alpha_j\in A\setminus X$. For each $j \geq 1$, the vertices $\alpha_j$ and $a_{j+1}$ share a common neighbor $b'_j\in B'$. This gives a path $R = \left\{d_1,a_1,b'_1,\alpha_1,d_2,a_2,b'_2,\alpha_2,\dots,d_{|D\setminus (D'\cup X)|},a_{|D\setminus (D'\cup X)|},b'_{|D\setminus (D'\cup X)|},\alpha_{|D\setminus (D'\cup X)|}\right\}$. %The paths $Q$ and $R$ may be adjoined using a common neighbor in $B$, BUT THERES NO REASON TO ADJOIN THEM

Hence, the collection of paths
\begin{equation*}
Q' = Q \cup R \cup \left(\bigcup_{x_i\in D'\cap X}\{x_i,\beta_i,\alpha_i\}\right) \cup \left(\bigcup_{x_j\in (D\setminus D')\cap X}\{x_j,\alpha_j\}\right) %\left(\bigcup_{d_j\in D\setminus D'}\alpha_jd_ja_j\right)
\end{equation*}
satisfies
\begin{equation}\label{Equation:Matching Bound}
%t - 2k \leq |Q'\cap B| - |Q'\cap A| \leq t
|Q'\cap B| - |Q'\cap A| = \tau
\end{equation}
and contains all vertices in $D$. %We may tailor the length of the path $Q$ so that $Q'$ contains \emph{precisely} the appropriate number of additional vertices in $B$ and contains all vertices in $D$.

\begin{case}\label{Case:Star}
$G[B]$ contains at least $\left\lceil\frac{t-1}{2}\right\rceil$ vertices of degree at least $\frac{n}{21}$. 
\end{case}

This immediately implies $G[B]$ contains $\left\lceil\frac{t-1}{2}\right\rceil$ stars with greater than $\frac{n}{21}-|C| > \frac{n}{22}$ vertices in $G[B] \setminus C$.

Call this collection of stars $\mathcal{S}$, and for each star in $\mathcal{S}$, consider the smaller star $S_j = \{b'_j,c_j,\beta'_j\}$ with center $c_j$ and unique neighbors $b'_j,\beta'_j\in B'\setminus X$, which we may choose due to the large neighborhoods of all $c_j$. Then there exists a unique $a_j$ in the neighborhoods of both $b_j$ and $\beta_j$. Construct $P_D'$; then $P_D' \supseteq D'\setminus X$, and $|P_D' \cap B| - |P_D'\cap A| = 2|D'\setminus X| - 1$. %If $b_j = x_i$ for some $i,j$, then consider the path $\{x_j,c_j,b_j\}$. 
Extend $P_D'$ to $Q$ by adjoining as many stars $S_j$ as necessary (and of course the necessary common neighbors in $A$).

\begin{claim}\label{Claim:Enough Stars}
There exist enough stars $S_j$ so that we have
\begin{equation}\label{Equation:Need Q}
|Q\cap B| - |Q\cap A| = \tau - |D'\cap X|
\end{equation}
with $Q$ beginning in $B$ and ending in $A$.
\end{claim}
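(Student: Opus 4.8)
The plan is to track exactly how many more $B$-vertices than $A$-vertices each star contributes and then show that the available supply of stars exceeds what is needed. A single small star $S_j = \{b'_j, c_j, \beta'_j\}$ together with its linking vertex $a_j \in A$ forms a gadget on the four vertices $b'_j, c_j, \beta'_j, a_j$, three of which lie in $B$ and one in $A$; when we string consecutive gadgets together along $Q$ we will reuse each linking $a_j$ as the common neighbor of $\beta'_{j-1}$ and $b'_j$, so that beyond the initial segment $P_D'$ each appended star increases $|Q\cap B| - |Q\cap A|$ by exactly $2$. Since $P_D'$ already realizes $|P_D'\cap B| - |P_D'\cap A| = 2|D'\setminus X| - 1$ (an odd value), appending stars adjusts this quantity in steps of $2$, so the target parity of $\tau - |D'\cap X|$ must be checked; this is exactly where the parity bookkeeping from Claim~\ref{Lemma:Tau} and the definitions of $O_A, O_B', E_D'$ pays off, guaranteeing $\tau - |D'\cap X|$ has the same parity as $2|D'\setminus X|-1$ after we possibly drop or keep the first vertex $b'_1$ of $P_D'$ (as in the matching case). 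Hence the desired value is reachable provided we have at least $\tfrac12\bigl(\tau - |D'\cap X| - (2|D'\setminus X|-1)\bigr)$ stars available.

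Next I would bound this requirement against the stock of stars. By Claim~\ref{Lemma:Tau Bounds} we have $\tau \le t - 2|D'\cap X| \le t$, and by construction Case~\ref{Case:Star} supplies $\lceil (t-1)/2\rceil$ stars, of which at most $k$ are lost because their centers or chosen leaves land in $X$ (these we handle separately via the reserved neighbors $\alpha_i$, exactly as the edges of $M_X'$ were handled in Case~\ref{Case:Matching}). Thus at least $\lceil (t-1)/2\rceil - k$ usable stars remain, and since the number needed is at most $\tfrac12(\tau+1) \le \tfrac12(t+1)$, it suffices that $\lceil (t-1)/2\rceil - k \ge \tfrac12(t+1)$, which fails for small $t$ but is rescued by the observation that when $t$ is small ($t = O(k)$) the set $D'$ and the odd-path corrections are correspondingly constrained, so $\tau$ is itself small and in fact $\tau - |D'\cap X| - (2|D'\setminus X|-1)$ is bounded by a constant; in that regime one checks directly, using $t \ge 2k-1$, that a handful of stars (together with the edges already guaranteed in $G[B]$) suffice. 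For large $t$ the inequality $\lceil (t-1)/2\rceil - k \ge \tfrac12(t+1)$ is immediate once $n$, hence $t$, is large, because the deficit is only the additive constant $k+1$.

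Finally, I would verify the endpoint conditions: each $c_j$ has more than $n/22$ neighbors in $G[B]\setminus C$, so in particular more than $n/22 > |D| + k$ neighbors available in $B'\setminus X$, ensuring the two distinct leaves $b'_j, \beta'_j \in B'\setminus X$ exist and are disjoint across gadgets; and since $b'_j, \beta'_j \in B'$, each has more than $\bigl(\tfrac12 - \tfrac1{100k}\bigr)n$ neighbors in $A$, so the common neighbor $a_j \in A\setminus X$ of $\beta'_{j-1}$ and $b'_j$ exists by a pigeonhole on the overlap of two such neighborhoods inside $A$ (of size within one of $n/2$). Arranging $Q$ to begin at the free leaf of the first star (in $B$) and terminate at the last linking vertex $a_j$ (in $A$), exactly as in Case~\ref{Case:Matching}, gives a path beginning in $B$ and ending in $A$ with the prescribed surplus, completing the proof of the claim.

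The main obstacle I anticipate is the small-$t$ boundary analysis: the clean asymptotic inequality degrades precisely when $t$ is close to its lower bound $2k-1$, and there one must argue that the structural constraints forcing $t$ small simultaneously force $\tau$, $|D'|$, and the parity corrections to be small enough that only a bounded number of gadgets is needed — this mirrors, and can borrow from, the corresponding case distinction already carried out inside Claim~\ref{Lemma:Tau Bounds}.
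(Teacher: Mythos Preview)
Your approach tracks the paper's closely---count the $B$-surplus contributed per star, compare to $\tau - |D'\cap X|$, and handle parity by choosing whether $P_{D'}$ starts at $b'_1$ or $d'_1$---but the central inequality you rely on is false, not just ``for small $t$.'' You write that it suffices to have
\[
\left\lceil \frac{t-1}{2}\right\rceil - k \;\geq\; \frac{t+1}{2},
\]
and that this holds once $t$ is large because ``the deficit is only the additive constant $k+1$.'' But subtract the two sides: the left is at most $t/2$, the right is $(t+1)/2$, so the difference is at most $-\tfrac12 - k$, which is negative for every $t$. The gap does not close as $t\to\infty$; it is a fixed negative constant. Your subsequent ``small $t$'' rescue is therefore being asked to cover all cases, and the sketch you give there (``$\tau$ is itself small \dots one checks directly'') is not an argument.

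The fix is to throw away fewer stars. You discard $k$ stars to avoid $X$, but the leaves $b'_j,\beta'_j$ are already chosen in $B'\setminus X$, so only centers lying in $X$ cause trouble, and the paper is more careful still: it accounts for the loss as $|D'\cap X|$ rather than $k$. Pairing this with the sharper upper bound $\tau - |D'\cap X| \le t - 3|D'\cap X|$ (which follows by plugging the extremal values $|O_A|\le k-|D'\cap X|$, $|O_B'|\ge 0$, $|E_{D'}|\ge 0$ into \eqref{Equation:Tau} and then subtracting $|D'\cap X|$) gives the comparison
\[
2\Bigl(\Bigl\lceil\tfrac{t-1}{2}\Bigr\rceil - |D'\cap X|\Bigr)\;\geq\; t-1-2|D'\cap X|\;\geq\; t-3|D'\cap X|\;\geq\;\tau-|D'\cap X|,
\]
which succeeds whenever $|D'\cap X|\ge 1$. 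The single residual case $|D'\cap X|=0$ with $t$ odd is then dispatched by a genuine parity observation (odd $t$ forces $n+k$ odd, hence $|O_A|\le k-1$, hence $\tau\le t-1$), not by an asymptotic hand-wave. Your proposal has the right architecture but the wrong bookkeeping; tightening the star loss from $k$ down to $|D'\cap X|$ and using the corresponding refinement of the $\tau$ bound is what makes the count close.
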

\begin{proof}
By assumption, we have at least $\left\lceil\frac{t-1}{2}\right\rceil - |D'\cap X|$ disjoint stars $S_j$ in $B\setminus X$. Since each star and an adjacent vertex in $A$ contribute 3 vertices in $B$ and 1 vertex in $A$, we have at least $2\left(\left\lceil\frac{t-1}{2}\right\rceil - |D'\cap X|\right) \geq t - 1 - 2|D'\cap X|$ additional vertices in $B$ as a result of adjoining the stars $S_j$ with a neighbor in $A$.

Recall from Claim~\ref{Lemma:Tau Bounds} that $0 \leq \tau \leq t - 3|D'\cap X| \leq t$. It follows that we need
\begin{equation*}
\begin{aligned}
|Q\cap B| - |B\cap A|   &= \tau - |D'\cap X| \\
			&\leq \big(|B|-|A|+(k-|D'\cap X|) - 0 - |D'\cap X| - 0\big) - |D'\cap X| \\
			&\leq t - 3|D'\cap X|.
\end{aligned}
\end{equation*}
Since we may construct $Q$ to give us as many as $t - 1 - 2|D'\cap X|$ additional vertices in $B$, we are done unless $|D'\cap X| = 0$ and $t$ is odd. (If $t$ is even, then $\left\lceil\frac{t-1}{2}\right\rceil = \frac{t}{2}$, and we have $t - 2|D'\cap X| \geq t - 3|D'\cap X| \geq \tau - |D'\cap X|$ additional vertices in $B$.) If $|D'\cap X| = 0$ and $t$ is odd, then either $n$ is odd and $k$ is even, or $n$ is even and $k$ is odd. Either way, we have $|O_A| \leq k-1$, which implies $\tau - |D'\cap X| \leq |B|-|A|+k-1-2|D'\cap X| < t - 3|D'\cap X|$. Therefore, we can always construct $Q$ to satisfy~\eqref{Equation:Need Q}. Furthermore, we can always have $Q$ end in $A$ by either starting $P_D'$ at $d'_1$ or at $b'_1$. This avoids a possible parity issue, as each star contributes 2 additional vertices in $B$.
%We now show that~\eqref{Equation:Need Q} may be satisfied even when $Q$ ends in $A$. If $\tau - |D'\cap X|$ is odd, then this is simple: choose $\frac{\tau - |D'\cap X|+1}{2}$ stars $S_j$ to adjoin to $P_D'$ and end with a vertex in $A$; by the above argument, there always exist enough stars $S_j$ for this to be possible. If $\tau - |D'\cap X|$ is even, then choose $\frac{\tau - |D'\cap X|+2}{2}$ stars $S_j$ to adjoin to $P_D'$ so that the last star $S^*$ has a center with a unique neighbor in $A$. Since $|D'| \leq 
\end{proof}

For all $x_i\in D'\cap X$, consider the unique neighbor $\beta_i$, which in turn has the unique neighbor $\alpha_i$. For all vertices $d_j\in D\setminus (D'\cup X)$, let $\alpha_j$ and $a_j$ be the unique neighbors in $A\setminus X$. 

As in Case~\ref{Case:Matching}, create the path $R$ containing all vertices in $D\setminus (D'\cup X)$; naturally, this implies $R$ contains as many vertices in $A$ as in $B$. %Adjoin $Q$ and $R$ using common neighbors. 
Then the collection 
\begin{equation*}
Q' = Q\cup R \cup \left(\bigcup_{x_i\in D'}\{x_i,\beta_i,\alpha_i\}\right) \cup \left(\bigcup_{x_j\in (D\setminus D')\cap X}\{x_j,\alpha_j\}\right)
%Q' = (Q\cup v) \cup \left(\bigcup_{\substack{c_j = x_i \\ x_i\in D'}}x_ib_i \right)% \cup \bigcup_{x_j\in D'}\{x_j,\beta_j,c_j,b_j\}\right)
\end{equation*}
satisfies
\begin{equation*}
|Q'\cap B| - |Q'\cap A| = \tau
\end{equation*}
and contains all vertices in $D$. %In the case where $|B| - |A| \leq k+1$, we see that either $|D| = |X\cap D'| = 0$ (see Lemma~\ref{Lemma:Few None in A}) or $|X\cap D'| \leq |D| \leq 1$. Either way, the collection $Q'$ contains the appropriate number of extra vertices in $B$ than in $A$.

In both Cases~\ref{Case:Matching} and~\ref{Case:Star}, we have created $Q'$, a collection of paths that contains the necessary number of extra vertices in $B$ than in $A$ and all vertices in $D'$.

We use induction to construct all desired paths $P_1,\dots,P_k$ with desired lengths $n_1,\dots,n_k$. Without loss of generality, assume $n_1 \leq \dots \leq n_k$.

For the base case, let $i=1$. %Assume first that $x_1\notin Q'$. 
Choose two sets $U_1\subset A$ and $V_1\subset B$, each of order $\max\left\{\frac{n}{25k},\left\lceil\frac{n_1}{2}\right\rceil\right\}$, and so that $U_1$ or $V_1$ contains $x_1$ and no other vertices in $Q'\cup X$. By Lemma~\ref{Lemma:Bipanconnected}, the balanced bipartite graph $U_1\cup V_1$ is bipanconnected, and so there exists a path $P_1$ beginning at $x_1$ of length $n_1$ within $U_1\cup V_1$. %If $x_1\in Q'$, then similarly use Lemma~\ref{Lemma:Bipanconnected} to create $P_1$ of length $n_1$.

Let $X^i=X\setminus \{x_1,\dots,x_i\}=\{x_{i+1},\dots,x_k\}$, let $m_i=\left\lceil\max\left\{n_i,\frac{n}{25k}\right\}\right\rceil$, and let $\mathscr{P}^i=\bigcup_{j=1}^iP_j$.

Now consider the inductive step for $1 < i < k$. We have 
\begin{equation*}
\begin{aligned}
n-|\mathscr{P}^{i-1}\cup Q\cup X^i| &\geq \frac{(k-i+1)n}{k} - (k-i+1) - \frac{n}{25k} \\
				      &> \frac{n}{k}.
\end{aligned}
\end{equation*}
Hence, there are always enough vertices to create a balanced bipartite graph $U_i\cup V_i\subset (A\cup B)\setminus (\mathscr{P}^{i-1}\cup Q\cup X^i)$ that includes $x_i$ with $m_i$ vertices. By Lemma~\ref{Lemma:Bipanconnected}, we may construct a path $P_i$ in $U_i\cup V_i$ beginning at $x_i$ with $n_i$ vertices.

Finally, for $i=k$, we wish for $P_k$ to contain all vertices in $V(G)\setminus \mathscr{P}^{k-1}$. Note that this final path must include $Q$ and $R$. We have $X^k=\emptyset$, and hence
\begin{equation*}
\begin{aligned}
n - |\mathscr{P}^{k-1}\cup Q\cup R\cup X^k|  &> n - \left(\frac{n}{25k} + \frac{k-1}{k}\right)n \\
					&> \frac{24n}{25k}.
\end{aligned}
\end{equation*}
By the definition of $Q$ and $R$, the graph $G\setminus \bigcup_{i=1}^{k-1}P_i$ is within a single vertex of being a balanced bipartite graph, and $|Q| + |R| \leq 4\tau + 2|D| \ll \frac{n}{25k}$. Also note that $Q$ and $R$ both start in $A$ and end in $B$; hence, we may adjoin $Q$ and $R$ using Lemma~\ref{Lemma:Bipanconnected} and some vertices $\{a_q,b_r\}$. Similarly, the amalgamation $Q\cup \{a_q,b_r\}\cup R$ still begins in $B$ and ends in $A$, allowing us to perform a similar process in adjoining $\{x_k,(\beta_k),\alpha_k\}$ (if $\beta_k$ exists) to $Q\cup \{a_q,b_r\}\cup R$. Lastly, use Lemma~\ref{Lemma:Bipanconnected} to adjoin $Q\cup \{a_q,b_r\}\cup R$ to a remaining vertex in $G - \mathscr{P}^{k-1}$, possibly adding on one last vertex if $V(G - \mathscr{P}^{k-1})$ is odd. 
This results in a path $P_k$ that begins at $x_k$ and necessarily contains $n_k$ vertices.

We have therefore created disjoint paths beginning at $x_i$ on $n_i$ vertices for all $1\leq i \leq k$ that together cover $V(G)$.
\end{proof}

%%%%%%%%%%%%%%%%%%%%%%%%%%%%%%%%%%%%%%%%%%%%%%%%%%%%%%%%%%%%%%%%%%%%%%%%%%%%%
%%%%%%%%%%%%%%%%%%%%%%%%%%%%%%%%%%%%%%%%%%%%%%%%%%%%%%%%%%%%%%%%%%%%%%%%%%%%%
\section{Proof of Lemma~\ref{Lemma:Connectivity}}\label{Section:Connectivity}
%%%%%%%%%%%%%%%%%%%%%%%%%%%%%%%%%%%%%%%%%%%%%%%%%%%%%%%%%%%%%%%%%%%%%%%%%%%%%
%%%%%%%%%%%%%%%%%%%%%%%%%%%%%%%%%%%%%%%%%%%%%%%%%%%%%%%%%%%%%%%%%%%%%%%%%%%%%

We begin with a lemma ensuring that low connectivity in the reduced graph $R$ results in at most two components in the original graph $G$.

\begin{lemma}\label{Lemma:G=AB}
Let $\epsilon,\d > 0$ be small reals and $k$ be a positive integer. If $G$ is a graph with $\sigma_2(G) \geq n+k-1$ and reduced graph $R$ with connectivity at most $\left(\frac{1}{10}-\frac{3}{5}(\d+2\epsilon)\right)|R|$, then $R$ consists of only two components {\rm(}and a cutset if $\kappa(R) > 0${\rm)}.
\end{lemma}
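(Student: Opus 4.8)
The plan is to argue that if $R$ had three or more components (after deleting a minimum cutset), then the degree-sum condition on $G$ — transferred to $R$ via Theorem~\ref{Thm:RDegreeSum} — would be violated. First I would invoke Theorem~\ref{Thm:RDegreeSum}: since $\sigma_2(G) \ge n+k-1 > n$, taking $c = 1$ gives $\sigma_2(R) \ge (1 - 2\d - 4\epsilon)|R|$. This is a very strong lower bound on the degree sum of $R$: any two non-adjacent vertices of $R$ have degree sum exceeding $(1 - 2\d - 4\epsilon)\r$, so in particular $\delta(R) \ge \tfrac{1}{2}(1-2\d-4\epsilon)\r - $ (something small), once we note that a vertex in one component and a vertex in another are automatically non-adjacent.

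Next I would set $S$ to be a minimum cutset of $R$, so $|S| = \kappa(R) \le (\tfrac{1}{10} - \tfrac{3}{5}(\d + 2\epsilon))\r$, and let $R - S$ have components $W_1, W_2, \dots, W_t$ with $t \ge 2$; we wish to rule out $t \ge 3$. Order them so $|W_1| \le |W_2| \le \dots \le |W_t|$. Pick a vertex $u \in W_1$ and a vertex $w \in W_2$; these are non-adjacent in $R$, so
\begin{equation*}
\deg_R(u) + \deg_R(w) \ge (1 - 2\d - 4\epsilon)\r.
\end{equation*}
But $\deg_R(u) \le |W_1| - 1 + |S|$ and $\deg_R(w) \le |W_2| - 1 + |S|$, so
\begin{equation*}
|W_1| + |W_2| + 2|S| - 2 \ge (1 - 2\d - 4\epsilon)\r.
\end{equation*}
If $t \ge 3$ then $|W_1| + |W_2| \le \tfrac{2}{3}(\r - |S|) \le \tfrac{2}{3}\r$ (since the two smallest of at least three parts summing to $\r - |S|$ total at most $\tfrac{2}{3}(\r-|S|)$), which combined with the displayed inequality forces
\begin{equation*}
\tfrac{2}{3}\r + 2|S| - 2 \ge (1 - 2\d - 4\epsilon)\r,
\end{equation*}
i.e. $2|S| \ge (\tfrac{1}{3} - 2\d - 4\epsilon)\r + 2$, so $|S| \ge (\tfrac{1}{6} - \d - 2\epsilon)\r + 1$. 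This contradicts the hypothesis $|S| \le (\tfrac{1}{10} - \tfrac{3}{5}(\d+2\epsilon))\r$, because $\tfrac{1}{6} - \d - 2\epsilon > \tfrac{1}{10} - \tfrac{3}{5}(\d+2\epsilon)$ for $\epsilon, \d$ small (the constant gap $\tfrac{1}{6} - \tfrac{1}{10} = \tfrac{1}{15}$ dominates the $O(\d+\epsilon)$ terms). Hence $t = 2$, and $R$ consists of exactly two components together with the cutset $S$ (which is empty precisely when $\kappa(R) = 0$); the case $\kappa(R) = 0$ is handled the same way with $S = \emptyset$.

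The main obstacle is bookkeeping the constants so that the contradiction is genuine: one must check that the arithmetic relating $\kappa(R)$, the sizes of the two smallest components, and the $1 - 2\d - 4\epsilon$ degree-sum bound really does close with the specific threshold $(\tfrac{1}{10} - \tfrac{3}{5}(\d+2\epsilon))\r$ stated in the lemma — in particular that the chosen threshold is exactly tight enough to also be useful downstream (it is presumably calibrated for the later arguments in Section~\ref{Section:Connectivity}). A secondary point requiring a little care: the bound $\deg_R(u) \le |W_1| - 1 + |S|$ uses that $u$ has no neighbors outside $W_1 \cup S$, which is immediate from $S$ being a cutset; and one should note that if some $W_i$ is a single vertex the argument still goes through since we only ever use the two smallest components and $t \ge 3$. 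No regularity machinery beyond Theorem~\ref{Thm:RDegreeSum} is needed here.
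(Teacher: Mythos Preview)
Your proof is correct and follows essentially the same approach as the paper: both transfer the degree-sum condition to $R$ via Theorem~\ref{Thm:RDegreeSum}, assume three components after removing a small cutset, and derive a lower bound on the cutset size contradicting the hypothesis. The only cosmetic difference is that the paper sums the three pairwise inequalities $|A|+|B|,\ |B|+|C|,\ |A|+|C| > (1-2(\d+2\epsilon))|R| - 2|D|$ and compares to $|A|+|B|+|C| \le |R|$, whereas you use one inequality on the two smallest components together with the pigeonhole bound $|W_1|+|W_2|\le \tfrac{2}{3}(\r-|S|)$; both routes yield $|S| \gtrsim \tfrac{1}{6}\r$, comfortably exceeding the stated $\tfrac{1}{10}$ threshold.
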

\begin{proof}
Applying Lemma~\ref{Lemma:Regularity} to $G$, let $G''=G'[V(G)\setminus V_0]$. Since $d_{G''}(v) > d_G(v) - (\d+2\epsilon)n$, it immediately follows that $\sigma_2(R) > (1 - 2(\d+2\epsilon))|R|$. Let $D$ be a cutset of $R$ (if one exists). Suppose $R$ (or $R\setminus D$) contains at least 3 components, three of which being $A$, $B$, and $C$. Let $a\in A$, $b\in B$ and $c\in C$. Then $d(a)+d(b) > (1-2(\d+2\epsilon))|R|$, which implies $|A|+|B| > (1-2(\delta+2\epsilon))|R| - 2|D|$. Similarly, the same is true for $|B|+|C|$ and $|A|+|C|$. So $2(|A|+|B|+|C|) > 3(1-2(\d+2\epsilon))|R| - 6|D|$, or $|D| > \left(\frac{1}{10}-\frac{3}{5}(\d+2\epsilon)\right)|R|$, a contradiction.
\end{proof}

Note that the connectivity of $R$ may be considerably larger than $\kr|R|$. Also note that $A$, $B$, $C$, and $D$ were vertex sets in $R$. In the following remark, the same symbols are used to denote vertex sets in $G$.

\begin{remark}\label{Remark:Split G}
Given small real numbers $\epsilon, \d > 0$ and a positive integer $k$, let $G$ be a graph of order $n = \sum_{i=1}^k n_i \geq n(\epsilon,\d,k)$ with $\sigma_2(G) \geq n+k-1$ and $\delta(G) \geq \frac{n_k}{8}$. If the reduced graph of $G$ has connectivity at most $\kr|R|$, then let $D\subset V(G)$ be the cluster corresponding to a cut vertex of $R$. (If $R$ contains no cut vertices, then $D=\emptyset$.) Let $V_0$ be the garbage cluster of $G$ resulting from Lemma~\ref{Lemma:Regularity}, and let $C$ be a minimum cutset of $G$. Then $C\subseteq D\cup V_0$. By Lemma~\ref{Lemma:Regularity}, each vertex of $R$ corresponds to a cluster in $G$ of order $L = \xi n$. Hence, we have $k+1 \leq |C| \leq |D| + |V_0| \leq \epsilon \xi |R| n + \epsilon n$. By Lemma~\ref{Lemma:G=AB}, we may define $A$ and $B$ to be the components of $G\setminus C$ and write $G=A\cup C\cup B$. It immediately follows from $\sigma_2(G)\geq n+k-1$ that
\begin{equation}\label{Equation:G=ACB}
\begin{aligned}
\delta(G[A]) &> |A|-|C| > |A| - (\epsilon \xi |R| + \epsilon)n, \\ %%%%%%%%%%%%%MAKE SURE ITS ACCURATE%%%%%%%%%%%%%%%%%%%%%%
\delta(G[B]) &> |B|-|C| > |B| - (\epsilon \xi |R| + \epsilon)n.    %%%%%%%%%%%%%MAKE SURE ITS ACCURATE%%%%%%%%%%%%%%%%%%%%%%
\end{aligned}
\end{equation}
From the condition $\delta(G) \geq \frac{n_k}{8} \geq \frac{n}{2k}$, we know $|A|,|B| \geq \frac{n_k}{8} - |C| \geq \left(\frac{1}{8k}- \epsilon \xi |R| -\epsilon\right)n > \frac{n}{8(k+1)}$.%%%%%%%%%%%%%MAKE SURE ITS ACCURATE%%%%%%%%%%%%%%%%%%%%%%
\end{remark}

Note that $\epsilon\xi|R| \ll 1$. 

While panconnected sets give paths of arbitrary length, only the endpoints are specified. Hence, to create disjoint paths of arbitrary length, we must create sets using vertices that are not part of an already existing desired path. Fortunately, even small subsets of $A$ and $B$ induce panconnected graphs. %To avoid this issue, we introduce the concept of \emph{unused} vertices in Section~\ref{Section:Proof}.%use Lemma~\ref{LemmaPanconnected Sets in B} to create panconnected sets starting with $\gamma_1$, then $\gamma_2$, etc.

\begin{lemma}\label{Lemma:Panconnected Sets in B}
Let $\epsilon$, $\d$, $k$, and $G=A\cup C\cup B$ be defined as in Remark~\ref{Remark:Split G}. Then the induced graph on any subgraph of $A$ or $B$ of order at least $2(\epsilon \xi |R|+\epsilon)n$ is panconnected.%set of order at least $4\epsilon n$ that is completely in $A$ or completely in $B$ is panconnected.
\end{lemma}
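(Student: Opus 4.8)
The plan is to obtain panconnectedness straight from Williamson's criterion (Theorem~\ref{Thm:Williamson}), the only work being a one-line minimum-degree estimate for the induced subgraph. By the symmetry between the two components in Remark~\ref{Remark:Split G} it is enough to fix a vertex set $S\subseteq A$ with $m:=|S|\geq 2(\epsilon\xi|R|+\epsilon)n$ and prove that $G[S]$ is panconnected; the case $S\subseteq B$ is verbatim, using the second line of~\eqref{Equation:G=ACB}.

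For the degree bound, take any $v\in S$. Inequality~\eqref{Equation:G=ACB} gives $d_{G[A]}(v) > |A| - (\epsilon\xi|R|+\epsilon)n$, and passing from $A$ down to $S$ removes at most $|A|-m$ neighbours of $v$, so
\begin{equation*}
d_{G[S]}(v) \;>\; \bigl(|A| - (\epsilon\xi|R|+\epsilon)n\bigr) - (|A|-m) \;=\; m - (\epsilon\xi|R|+\epsilon)n .
\end{equation*}
Since $m\geq 2(\epsilon\xi|R|+\epsilon)n$ the right-hand side is at least $m/2$, and because $\epsilon\xi|R|+\epsilon\ll 1$ while $n$ is large, the minimum cutset bound $|C|\leq(\epsilon\xi|R|+\epsilon)n$ leaves far more than a constant of slack; hence $\delta(G[S]) > \tfrac{m}{2}+1 \geq \tfrac{m+2}{2}$. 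Theorem~\ref{Thm:Williamson} now yields that $G[S]$ is panconnected, i.e.\ it has a path of every length from $2$ to $m-1$ between each pair of its vertices --- which is exactly the gadget needed when stitching paths of prescribed length inside $A$ or $B$ later in Section~\ref{Section:Connectivity}.

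I do not expect a real obstacle here; this is essentially the computation of Claim~\ref{Claim:DegSmallPanConn} transported to the components produced by Lemma~\ref{Lemma:G=AB}. The only nicety is the additive constant in the hypothesis of Theorem~\ref{Thm:Williamson}, which is harmless: either one absorbs it into the (very loose) bound $|C|\leq(\epsilon\xi|R|+\epsilon)n$ as above, or one simply states the lemma with threshold $2(\epsilon\xi|R|+\epsilon)n+2$, since every later application has $m$ of linear order in $n$.
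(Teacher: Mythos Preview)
Your proposal is correct and follows essentially the same route as the paper: restrict the minimum-degree bound~\eqref{Equation:G=ACB} to the subset, observe that the threshold $2(\epsilon\xi|R|+\epsilon)n$ makes the resulting degree exceed $\tfrac{m+2}{2}$, and invoke Theorem~\ref{Thm:Williamson}. Your treatment of the additive constant is in fact slightly more careful than the paper's, which simply writes $\delta(G[U]) \geq |S|-(\epsilon\xi|R|+\epsilon)n+1$ without comment.
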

\begin{proof}
We see from~\eqref{Equation:G=ACB} that $\delta(G[A]) > |A|-|C| > |A| - (\epsilon \xi |R| + \epsilon)n$. 
%Then $a$ is adjacent to at least $|A|+k-\epsilon n$ vertices in $A$. 
Then for all $U\subset A$ of order at least $2(\epsilon \xi |R| +\epsilon)n$, we have
\begin{equation*}
\begin{aligned}
\delta(G[U])  &\geq  |S|-(\epsilon \xi |R| + \epsilon)n+1 \\
%		     &\geq  \frac{\gamma_1 n+4}{4} \\
		     &\geq  \frac{|S|+2}{2}.
\end{aligned}
\end{equation*}
By Theorem~\ref{Thm:Williamson}, the graph $G[U]$ is panconnected. A symmetric argument shows that if $U\subset B$ has order at least $(\epsilon \xi |R| + \epsilon)n$, then $G[U]$ is panconnected.
\end{proof}

With this information, we prove the following lemma which is a bit stronger than Lemma~\ref{Lemma:Connectivity}.

\begin{lemma}
Given small real numbers $\epsilon, \d > 0$ and a positive integer $k$, let $G$ be a graph of order $n = \sum_{i=1}^k n_i \geq n(\epsilon,\d,k)$ with $\sigma_2(G) \geq n+k-1$ and $\delta(G) \geq \frac{n_k}{8}$. If $\kappa(R) \leq \epsilon|R|$, then the conclusion of Conjecture~\ref{Conj:E-O} holds. %for any $k$ distinct vertices $x_1,\dots,x_k$ in $G$, there exist disjoint paths $P_1,\dots,P_k$ such that $|V(P_i)| = n_i$ and $x_i$ is an endpoint of $P_i$ for every $i$ satisfying $1 \leq i \leq k$.
\end{lemma}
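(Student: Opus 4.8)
The plan is to use the structural decomposition $G = A \cup C \cup B$ from Remark~\ref{Remark:Split G}, where $C$ is a small cutset with $k+1 \le |C| \le (\epsilon\xi|R| + \epsilon)n \ll \frac{n}{k}$, and where $A$ and $B$ are the two components of $G \setminus C$, each of order at least $\frac{n}{8(k+1)}$. The key tool is Lemma~\ref{Lemma:Panconnected Sets in B}: every not-too-small subset of $A$ (respectively $B$) induces a panconnected graph, so once we route a desired path into the correct side, we can realize any prescribed length inside a fresh panconnected subset, just as in the proof of Lemma~\ref{Lemma:Degree}. The first step is to distribute the $k$ target lengths $n_1 \le \dots \le n_k$ and start vertices $x_1, \dots, x_k$ between the two sides: since $|A| + |C| + |B| = n = \sum n_i$ and neither side is tiny, we can assign whole paths to $A \cup C$ or $B \cup C$ so that the total length demanded on each side matches its available vertices up to an $O(|C|)$ error, and we will use paths passing through $C$ to correct this and to clean up $C$ itself.

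Next I would handle the cutset. Since every vertex of $C$ has degree at least $\delta(G) \ge \frac{n_k}{8}$ and $|C| \ll \frac{n}{k}$, each $c \in C$ has many neighbors on at least one of the two sides (and, because $\sigma_2(G) \ge n+k-1$ forces $C$ to not be too independent, we can in fact find for each $c$ two neighbors in whichever side we need). Using the panconnectedness of large subsets of $A$ and $B$ and the Menger/connectivity bound $\kappa(G) \ge k+1$, I would build a short system of paths: one ``cleanup'' path $P_C$ that starts and ends in $B$ (say), has interior visiting all of $C \setminus X$ with at most one $B$-vertex inserted between consecutive $C$-vertices, so $|P_C| \le 4|C| \ll \frac{n}{k}$, and, for each $x_i \in C \cap X$, a short stub from $x_i$ into its designated side serving as a proxy. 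For each $x_i$ lying in $A$ or $B$ but assigned to the other side — which cannot happen since a path starting at $x_i$ can stay within $x_i$'s own component whenever that side has room — no routing is needed; otherwise we use a length-$O(1)$ path through $C$.

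With the cutset absorbed and proxies in hand, the construction mirrors Lemma~\ref{Lemma:Degree} and the independent-set case: process $P_1, \dots, P_{k-1}$ in order, each inside a freshly chosen panconnected subset of the appropriate side containing the relevant start vertex or proxy and avoiding everything used so far (there is always room because $\sum_{j<k} n_j \le n - n_k$ and $|C|, |P_C| \ll \frac{n}{k}$), realizing the exact length $n_j$ via Theorem~\ref{Thm:Williamson}; then $P_k$ is forced to consume all remaining vertices of its side together with $P_C$ and the $C$-proxy stubs, which we can arrange by panconnectedness since the leftover on that side, minus $O(|C|)$, is a panconnected set and the counts match exactly. The main obstacle I expect is the bookkeeping that guarantees the side-assignment of lengths is simultaneously consistent with (i) the location of the prescribed start vertices $x_i$, (ii) the exact vertex counts $|A|, |B|$ shifted by the vertices of $C$ that each path must or must not pass through, and (iii) keeping every intermediate panconnected subset above the threshold $2(\epsilon\xi|R| + \epsilon)n$ of Lemma~\ref{Lemma:Panconnected Sets in B}; this is a finite case analysis on how the $x_i$ split between $A$, $B$, and $C$, analogous to the $\tau$-computation in the proof of Lemma~\ref{Lemma:IndepSet}, but here it is easier because both sides are genuinely dense rather than bipartite.
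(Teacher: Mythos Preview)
Your overall architecture---use the $A\cup C\cup B$ decomposition, exploit panconnectedness of large subsets of each side, reserve proxies in order to thread through $C$---matches the paper's. But the side--assignment step is a genuine gap. You claim you can assign whole paths to $A$ or to $B$ so that the total demand on each side matches $|A|$ and $|B|$ up to $O(|C|)$; this is a subset--sum statement that is simply false in general. Take $k=3$, $n_1=n_2=n_3=n/3$, and $|A|\approx |B|\approx n/2$: any assignment of whole paths misses $|A|$ by $n/6$, which is far larger than $|C|=O(\epsilon n)$. Your cleanup path $P_C$, which lives in $B\cup C$ and has order $O(|C|)$, cannot repair a discrepancy of order $n/k$, and your $P_k$, which you say ``consume[s] all remaining vertices of its side together with $P_C$'', then leaves a $\Theta(n/k)$ residue on the \emph{other} side uncovered.

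The paper avoids this by never pre--assigning sides. It processes $P_1,\dots,P_{k-1}$ greedily: if $x_i\in A$ and $n_i$ fits in what remains of $A$, build $P_i$ there; otherwise build $P_i$ to use almost all of the remaining $A$, then cross through a single fresh $c\in C$ (via its two reserved proxies $a_c\in A$, $b_c\in B$) and finish in $B$. This one--crossing mechanism is exactly what absorbs the $\Theta(n/k)$ mismatch you cannot handle. For $P_k$, the paper's cleanup path $P_C$ is not confined to one side: it strings together \emph{all} remaining vertices of $C$ by zigzagging between their $A$-- and $B$--proxies, so $P_k=P_A\cup P_C\cup P_B$ spans what is left of both $A$ and $B$. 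Two smaller points: since $C$ is a \emph{minimum} cutset, every $c\in C$ automatically has neighbors in both $A$ and $B$ (else $C\setminus\{c\}$ would still separate), so you do not need the $\sigma_2$ argument you sketch; and you need proxies on both sides, not ``in whichever side we need''.
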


\begin{proof} 
Suppose $\kappa(R) \leq \kr|R|$, and let $G=A\cup C\cup B$ as in Remark~\ref{Remark:Split G}.
%then by Lemma~\ref{Lemma:G=AB}, we see $R$ must consist of two components $A_R$ and $B_R$ with a single cut vertex $\gamma$. Let $A,B,C\subset G$ correspond to $R_A$, $R_B$, and $\gamma$, respectively. Letting $A = A\cup A_C$ and $B = B\cup B_C$ as in Remark~\ref{Remark:Split G}, we have $G = A\cup C\cup B$. 
As noted before~\eqref{Equation:G=ACB}, we know $k+1 \leq |C| \leq (\epsilon \xi |R| + \epsilon)n$. As noted after~\eqref{Equation:G=ACB}, we know $|A|,|B| > \frac{n}{8(k+1)}$. For each $c\in C$, we may reserve 2 unique neighbors $a_c\in A$ and $b_c\in B$. %,v_a\in A$ and 2 distinct neighbors $u_b,v_b\in B$. %Since $\epsilon$ can be chosen to be sufficiently small depending on $\gamma_1,\dots,\gamma_k$ and $n$ can then be chosen to be sufficiently large, every vertex in $V_0$ must be adjacent to at least one such vertex in $A\cup B$. Furthermore, we may ensure that all such neighbors are chosen to be distinct. 
Call $A_C = \{a_c\in A\setminus X \;|\; c\in C\}$ (symmetrically $B_C = \{b_c\in B\setminus X \;|\; c\in C\}$) the set of \emph{proxy} vertices in $A$ (symmetrically $B$). Then we have %In particular, note that every vertex in $C$ has unique proxy vertices in both $A_C$ and $B_C$, and hence
\begin{equation*}
|C| = |A_C| = |B_C|.
\end{equation*} %Let $\{w_1,\dots,w_k\}$ be the set of all vertices in $V_0^A\setminus (C\cup X)$. 
%By Lemma~\ref{Lemma:Panconnected Sets in B}, there exist disjoint paths of length 2 between each successive pair of proxy vertices in $A$ (symmetrically $B$). Hence, there exists a $w_1,w_k$-path $P_A\subset A\cup V_0$ using said proxy vertices and their paths of length 2. Similarly, create a path $P_B\subset B\cup V_0$ of all remaining vertices in $V_0\setminus (C\cup X)$ with proxy vertices in $B$. In a similar fashion, create a path $P_C$ weaving through all vertices of $C\setminus X$. The set $P_A\cup P_B\cup P_C$ contains fewer than $8\epsilon n$ vertices.
%Let $A=(A\cup V_0^A)\setminus (C\cup X)$ and $B=(B\cup V_0^B)\setminus (C\cup X)$.
%If $\gamma_1 n < |A| - 5\epsilon n$, then create an $x_1$-path $P_1$ in $A$ that contains $P_A$ and satisfies $|P_1| = \gamma_1 n$.
Given a vertex $x$, let an \emph{$x$-path} be a path containing $x$ as an endpoint. Namely, each desired path $P_i$ in $G$ is an $x_i$-path. %, and the construction of an $x_i$-path always begins with $x_i$. %We say $v$ is \emph{unused} if for all $i$ the vertex $v$ is not currently contained in an $x_i$-path. 
%
%Let $A^*\subset A$ be the set of unused vertices in $A$, let $B^*$ be the set of unused vertices in $B$, and let $C^*$ be the set of unused vertices in $C$. 
Recall that $X = \{x_i \;|\; n_i\leq n_{i+1}\}$. For some $i\leq k$, let
\begin{equation*}
A^*=A\setminus \bigcup_{j=1}^{i-1}P_j,
\end{equation*}
and define $B^*$ and $C^*$ similarly. In particular, note that $A^*=A$ for $i=1$. Let
\begin{equation*}
\begin{aligned}
A^v = (A^*\setminus (A_C\cup X))\cup v, \\
B^v = (B^*\setminus (B_C\cup X))\cup v;
\end{aligned}
\end{equation*}
i.e., if $v=x_i$ or $v=a_c$, then $A^v\cap (X\cup A_C) = v$, and symmetrically for $B^v$ and $B_C$. For the sake of notation, if $v=x_i$ for some $i$, then we write $A^x$ and $B^x$. This will never cause an issue, as we never discuss $A^*$ or $x_i$ for different values of $i$ at the same time.

We induct on $i$ to prove our result. Consider the base case $i=1$. If $x_1\in A$ and $n_1 \leq |A^x| - 4(\epsilon \xi |R| + \epsilon)n$, then use Lemma~\ref{Lemma:Panconnected Sets in B} to construct an $x_1$-path $P_1\subset A^x$ containing $n_1$ vertices. If $x_1\in A$ and $n_1 > |A^x| - 4(\epsilon \xi |R| + \epsilon)n$, then let $c\in C$ with proxy vertices $a\in A_C$ and $b\in B_C$. Use Lemma~\ref{Lemma:Panconnected Sets in B} to create an $x_1,a$-path $P_A$ consisting of all but $4(\epsilon \xi |R| + \epsilon)n$ vertices of $x_1 \cup A^a$. Also create a $b$-path $P_B\subseteq B^b$ with $n_1 - |P_A| - 1$ vertices. Then $P_1 = P_A\cup c\cup P_B$ is an $x_1$-path with $n_1$ vertices. If $x_1\in B$, then a symmetric argument works. Lastly, if $x_1\in C$, then suppose without loss of generality that $|A\setminus (A_C\cup X)| \geq |B\setminus (B_C\cup X)|$. Since $|C|+|A_C\cup X| \leq 2(\epsilon \xi |R| + \epsilon)n + k < 4(\epsilon \xi |R| + \epsilon)n$ and $n_1\leq \frac{n}{k}$, this implies $n_1 < |A^x\cup a| - 4(\epsilon \xi |R| + \epsilon)n$. Let $a\in A_C$ be the proxy vertex of $x_1$, and use Lemma~\ref{Lemma:Panconnected Sets in B} to create an $a$-path $P_A\subset A^a$ with $n_1 - 1$ vertices. Then $P_1 = x_1\cup P_A$ is an $x_1$-path with $n_1$ vertices.

Now suppose $1 < i < k$ and that the disjoint $x_j$-paths $P_1,\dots,P_{i-1}$ have been constructed in $G$. If $x_i\in A^*\cup C^*$ and $n_i \leq |A^x| - 4(\epsilon \xi |R| + \epsilon)n$, then use Lemma~\ref{Lemma:Panconnected Sets in B} to construct an $x_i$-path $P_i\subset A^x$ containing $n_i$ vertices. If $x_i\in A^*\cup C^*$ and $n_i > |A^x| - 4(\epsilon \xi |R| + \epsilon)n$, then
\begin{equation}\label{Equation:B Big}
\begin{aligned}
|B^b| &=    n - |A^*| - |C^*| - |\{x_i,\dots,x_k\}| - \left|\bigcup_{j=1}^{i-1}P_j\right| \\
 %     &=    n - |A^*| - |C^*| - (k-i+1) - \sum_{j=1}^{i-1}n_j \\
      &\geq n - n_i - 4(\epsilon \xi |R| + \epsilon)n - (\epsilon \xi |R| + \epsilon)n - (k-i+1) - \sum_{j=1}^{i-1}n_j \\
%      &>    n - |A^*| - \sum_{j=1}^{i-1}n_j - 3\epsilon n. \\
%	&=  \sum_{j=i}^{k}n_j - |A^*| - 3\epsilon n \\
	&\geq  \sum_{j=i+1}^{k}n_j - 6(\epsilon \xi |R| + \epsilon)n.
\end{aligned}
\end{equation}
Hence, if $i<k-1$, then $|B^b| > n_{i+1} \geq n_i$. If $i=k-1$, then by the Pigeonhole Principle, we have
\begin{equation}\label{Equation:B Big Set}
\begin{aligned}
|B^b|   &> n_k-6(\epsilon \xi |R| + \epsilon)n \\
	&> \frac{n}{k} - 6(\epsilon \xi |R| + \epsilon)n \\
%	&\gg 6\epsilon n.
\end{aligned}
\end{equation}
Furthermore, since $|C^*\setminus x_i| \geq k+1 - (i-1) - 1\geq 2$, we know there exists $c\in C^*$ with proxy vertices $a\in A_C$ and $b\in B_C$. If $|A^x| > 4(\epsilon \xi |R| + \epsilon)n$, then use Lemma~\ref{Lemma:Panconnected Sets in B} to create an $x_i,a$-path $P_A$ consisting of all but $4(\epsilon \xi |R| + \epsilon)n$ vertices of $a\cup A^x$. If $|A^x|\leq 4(\epsilon \xi |R| + \epsilon)n$ and $a\in A^*$, then use Lemma~\ref{Lemma:Panconnected Sets in B} to create an $x_i,a$-path $P_A$ consisting of three vertices in $a\cup A^x$. If $|A^x|\leq 4(\epsilon \xi |R| + \epsilon)n$ and $a\in C^*$, then let $P_A=\emptyset$. Regardless of the initial size of $A^x$, we now have
\begin{equation}\label{Equation:Ax big enough}
3(\epsilon \xi |R| + \epsilon)n < |A^x \setminus P_A| \leq 4(\epsilon \xi |R| + \epsilon)n.
\end{equation}
From~\eqref{Equation:B Big}~and~\eqref{Equation:B Big Set}, we may similarly use Lemma~\ref{Lemma:Panconnected Sets in B} to create a $b$-path $P_B\subset B^b$ with $n_i - |P_A| - 1$ vertices. Then $P_i = P_A\cup c\cup P_B$ is an $x_i$-path with $n_i$ vertices. %Lastly, if $x_i\in C^*$ with proxy vertices $a\in A_C$ and $b\in B_C$, then construct an $x_i$-path one of two ways. If $n_i \leq |A^i| - 5\epsilon n$, then use Theorem~\ref{Thm:Williamson} to construct an $a$-path $P_A\subseteq A^a$ with $n_i-1$ vertices. Then $P_i=x_i\cup P_A$ is an $x_i$-path with $n_i$ vertices. If $n_i > |A^i| - 5\epsilon n$, then use Theorem~\ref{Thm:Williamson} to create a
If $x_i\in B^*$, then a symmetric argument works.

Finally, suppose $i=k$ and that the disjoint $x_j$-paths $P_1,\dots,P_{k-1}$ have been constructed in $G$. 
%since $|C^*\setminus x_k| \geq 1$, we know there exists $c\in C^*$ with proxy vertices $a\in A_C$ and $b\in B_C$. 
From~\eqref{Equation:B Big Set}~and~\eqref{Equation:Ax big enough}, we know
\begin{equation}\label{Equation:Make}
\begin{aligned}
|A^* \setminus(X\cup A_C)| > 3(\epsilon \xi |R| + \epsilon)n, \\
|B^* \setminus(X\cup B_C)| > 3(\epsilon \xi |R| + \epsilon)n.
\end{aligned}
\end{equation}
From the Pigeonhole Principle, we also know
\begin{equation*}
n_k \geq \frac{n}{k} \gg |A_C| + |C^*| + |B_C|.
\end{equation*}
Without loss of generality, assume $x_k\in A^*\cup C^*$. Note that $|C^*\setminus x_k| \geq 1$, and hence, that we must have $n_k > |A^x|$. Define $A_C^*$ and $B_C^*$ similarly to the way $A^*$ and $B^*$ are defined. Use Lemma~\ref{Lemma:Panconnected Sets in B} repeatedly $|C^*| \leq (\epsilon \xi |R| + \epsilon)n$ times within $A^*$ and within $B^*$ each to create a path $P_C$ that strings together all vertices in $C^*$ by using all proxy vertices in $A_C^*$ and $B_C^*$. Since $A_C^*$, $B_C^*$, and $C^*$ each have at most $(\epsilon \xi |R|+\epsilon)n$ vertices, and since $|C^*\setminus x_k| \geq 1$, we know $7 \leq |P_C| < 5(\epsilon \xi |R| + \epsilon)n$. Given the high value of $\delta(G)$, we may ensure that $P_C$ starts with a proxy vertex in $A^*$ and ends with a proxy vertex in $B^*$ by including an additional vertex in $A^*$ or $B^*$ adjacent to some vertex in $C^*$. Let the endpoints of $P_C$ be $a\in A^*$ and $b\in B^*$. Noting~\eqref{Equation:Make}, we may use Lemma~\ref{Lemma:Panconnected Sets in B} in $A^*$ and $B^*$ to create an $x_k,a$-path $P_A$ that contains all vertices in $A^*\setminus P_C$. Similarly, use~\eqref{Equation:Make} and Lemma~\ref{Lemma:Panconnected Sets in B} to create a $b$-path $P_B$ that contains all vertices in $B^*\setminus P_C$. Then $P_k = P_A\cup P_C\cup P_B$ is an $x_k$-path that contains all remaining $n_k$ vertices in $G$.

We have created $k$ paths $P_1,\dots,P_k$ in $G$, with each path $P_i$ starting at $x_i$ and having $n_i$ vertices.
\end{proof}

%\newpage

\bibliography{ref}
\bibliographystyle{plain}

\end{document}